\tikzset{snake it/.style={decorate, decoration=snake}}
\newtheorem{theorem}{Theorem}[section]
\newtheorem{lemma}[theorem]{Lemma}
\newtheorem{prop}[theorem]{Proposition}
\theoremstyle{definition}
\newtheorem{definition}[theorem]{Definition}
\newtheorem{example}[theorem]{Example}
\theoremstyle{remark}
\newtheorem{remark}[theorem]{Remark}
\numberwithin{equation}{section}
\newcommand{\A}{\mathcal{A}}
\newcommand{\B}{\mathcal{B}}
\newcommand{\C}{\mathcal{C}}
\newcommand{\E}{\mathcal{E}}
\newcommand\restr[2]{{
  \left.\kern-\nulldelimiterspace 
  {#1} 
  \vphantom{\big|} 
  \right|_{#2} 
  }}
\newcommand{\midarrow}{\tikz \draw[->] (0,0) -- +(.1,0);}
\tikzset{
    ncbar angle/.initial=90,
    ncbar/.style={
        to path=(\tikztostart)
        -- ($(\tikztostart)!#1!\pgfkeysvalueof{/tikz/ncbar angle}:(\tikztotarget)$)
        -- ($(\tikztotarget)!($(\tikztostart)!#1!\pgfkeysvalueof{/tikz/ncbar angle}:(\tikztotarget)$)!\pgfkeysvalueof{/tikz/ncbar angle}:(\tikztostart)$)
        -- (\tikztotarget)
    },
    ncbar/.default=0.5cm,
}
\tikzset{square left brace/.style={ncbar=0.5cm}}
\tikzset{square right brace/.style={ncbar=-0.5cm}}
\tikzset{round left paren/.style={ncbar=0.5cm,out=120,in=-120}}
\tikzset{round right paren/.style={ncbar=0.5cm,out=60,in=-60}}
\DeclareFontFamily{U}{wncy}{}
    \DeclareFontShape{U}{wncy}{m}{n}{<->wncyr10}{}
    \DeclareSymbolFont{mcy}{U}{wncy}{m}{n}
    \DeclareMathSymbol{\Sh}{\mathord}{mcy}{"58}
\newcommand{\ptrans}[3]
{
  \ifthenelse{\equal{#2}{} \and \equal{#3}{}}
  	{  \ifthenelse{\equal{#1}{}}{P}{P(#1)}	 }
	{  \ifthenelse{\equal{#1}{}}{P|^{#3}_{#2}}{P(#1)|^{#3}_{#2}}}
}
\newcommand\wt[1]{\widetilde{#1}}
\renewcommand\={:=}  	
\newcommand\bt{\mathbf{t}}
\newcommand\iso{\cong}
\renewcommand\o{\circ}
\newcommand{\NGA}{\nabla\mathrm{GA}}
\newcommand{\DGA}{\mathrm{DGA}}
\DeclareMathOperator{\End}{End}
\DeclareMathOperator{\Hom}{Hom}
\DeclareMathOperator{\Mat}{Mat}
\DeclareMathOperator{\Vect}{Vect}
\begin{document}

\begin{abstract}
We introduce a new variant of Hochschild's two-sided bar construction for the setting of curved differential graded algebras.  One can geometrically think of the classical bar complex as elements from the algebra positioned along different points in the closed interval $[0,1]$.  In this paper, we start with a curved differential graded algebra and define a new ``zigzag algebra'' that, informally, consists of algebra elements arranged on a zigzag of intervals going back and forth between $0$ and $1$.  We focus on two curved differential graded algebras: the de Rham algebra of differential forms with values in the endomorphism bundle associated to a vector bundle with connection, and its induced zigzag algebra.  We define a curved version of Chen's iterated integral that incorporates parallel transport and maps this zigzag algebra of bundle-valued forms to bundle-valued forms on the path space.  This iterated integral is proven to be a homotopy equivalence of curved differential graded algebras, and for real-valued forms it factors through the usual Chen iterated integral.
\end{abstract}

 \pagestyle{headings}
\title{Modeling bundle-valued forms on the path space with a curved iterated integral}
\author[C. Glass]{Cheyne J. Glass}
\author[C. Redden]{Corbett Redden}

\maketitle 

\section{Introduction}\label{section introduction}

Consider the classical situation where $\A$ is an arbitrary commutative associative differential graded algebra.  Associated to $\A$ is its 2-sided bar complex, which we denote by $CH^I(\A)$ and refer to as the ``interval Hochschild complex.''   This 2-sided bar complex is also a commutative dga and provides an algebraic version of the path space of $\A$.  If $M$ is a smooth manifold, and $\A=\Omega(M)$ is the de Rham algebra of differential forms, then Chen's iterated integral \cite{C} provides an explicit homotopy equivalence of $\DGA$s
\[ It: CH^I\big(\Omega(M)\big) \overset{\simeq}\longrightarrow  \Omega(PM). \]
This powerful and widely used tool allows differential forms on the infinite-dimensional path space $PM$ to be modeled by tensor products of ordinary forms on $M$.  Variants of the construction, such as the  Hochschild complex and the cyclic bar complex, allow one to similarly model structures on the free loop space $LM$.  For example, when $(E,\nabla^E)\to M$ is a vector bundle with connection, Getzler--Jones--Petrack \cite{GJP} and Tradler--Wilson--Zeinalian \cite{TWZ} use this method to construct the Bismut--Chern character forms in $\Omega_{S^1}(LM)$, refining the ordinary $\Omega(M)$-valued Chern character.  

While the details of  \cite{GJP, TWZ} are not used in this paper, they provide one of the motivations for our work.  The Chern--Weil forms in $\Omega(M)$ are obtained by taking traces of powers of the curvature element $R \in \Omega^2(M,\E)$, where $\E \= \End (E) = \Hom(E,E)$ is the endomorphism bundle over $M$.  Much of the work in those papers, especially \cite{TWZ}, occurs {\it pre-trace} while working with tensor powers of bundle-valued forms $\Omega(M,\E)$.  It is tempting, but incorrect, to think that Chen's iterated integral would give a similar equivalence of dgas from $CH^I\big(\Omega_{\nabla}(M,\E)\big)$ to $\Omega_{\wt{\nabla}}(PM, \E_0)$, where $(\E_0, \wt{\nabla}) \= ev_0^*(\E,\nabla) \to PM$ is the pullback of $\E$ along paths' initial points.  In this paper, we provide appropriate modifications and transform that naive hope into a theorem.

The most obvious subtlety is that $\Omega(M)$ is a commutative differential graded algebra, but the collection of bundle-valued forms $\Omega_{\nabla}(M,\E)$ is usually not for two  reasons.  First, the graded algebra structure on $\Omega_{\nabla}(M,\E)$ is no longer commutative when $\dim E > 1$.  Second, if the connection $\nabla$ on $\E$ is not flat, then $\nabla^2 \neq 0$, so $\Omega_{\nabla}(M,\E)$ is not a cochain complex.
For this reason, we work in the more general context of (not necessarily commutative) {\it curved dgas}, where the differential is not required to square to zero.

Section \ref{section curved dga} is a brief introduction to the category of curved dgas, which we denote by $\NGA$.  One important note is that our category $\NGA$ only contains morphisms that commute with the differential, as opposed to the more general morphisms in sources such as \cite{Block, BD, P}.  In particular, two non-isomorphic connections on $E$  give rise to distinct $\Omega_{\nabla^i}(M, \E)$  that are not isomorphic in $\NGA$.  Section \ref{section cohomology} introduces homotopy equivalence and cohomology in $\NGA$.  We reuse, symbol for symbol, the standard definition of chain homotopy from cochain complexes to define homotopy equivalences in $\NGA$.  A priori, the cohomology of $\mathcal{A} \in \NGA$ is not defined since  $\nabla^2\neq 0$.  We account for this failure and construct what we call the {\it curved cohomology} algebra $H^\bullet_{cur}(\mathcal{A})$; it is equivalent to the cohomology of the maximal cochain subcomplex of $\mathcal{A}$.  As expected, a homotopy equivalence between two objects in $\NGA$ induces an isomorphism in curved cohomology.  At this point, we now have the language to say whether something is equivalent to $\Omega_{\wt{\nabla}} (PM, \E_0)$ in $\NGA$.

One might expect to simply work with $CH^I(\Omega_{\nabla}(M,\E))$, since the usual 2-sided bar construction does make sense for non-commutative DGAs.  However, the commutativity of $\Omega(M)$ is required for Chen's iterated integral to be a morphism of algebras.  This can be seen in the following formal calculation, where all $\pm$ signs are suppressed and where $\Omega$ denotes either $\Omega(M)$ or $\Omega(M, \Mat_{n\times n}(\mathbb{R}))$.  See Subsection \ref{subsec: define It} for a more precise discussion.  Let $\alpha, \beta \in \Omega$ with induced $\underline{\alpha} = 1\otimes \alpha \otimes 1$, $ \underline{\beta}=1\otimes\beta\otimes 1 \in CH^I(\Omega)$.  
The image of $\underline{\alpha}$ under the usual Chen iterated integral map is given by \label{intro calculation}
\[ 
\begin{tikzcd}[row sep=0em]
1& \alpha & 1 \\
    {}  \arrow[r, mapsfrom, maps to, start anchor={center}, end anchor={center}]& {}\arrow[r, mapsfrom, maps to, start anchor={center}, end anchor={center}]& {}
\end{tikzcd} 
\quad \longmapsto \quad It(\underline{\alpha})= \int_0^1 \iota_{\bt} \alpha(t) \, dt.
 \]
When $\odot$ is the usual shuffle product on $CH^I(\Omega)$, the product
\[ \underline{\alpha} \odot \underline{\beta} \quad = \quad 
\begin{tikzcd}[row sep=0em, column sep=tiny]
1& \alpha & \beta &1 \\
    {}  \arrow[r, mapsfrom, maps to, start anchor={center}, end anchor={center}]& {}\arrow[r, mapsfrom, maps to, start anchor={center}, end anchor={center}]& {} \arrow[r, mapsfrom, maps to, start anchor={center}, end anchor={center}]&{}
\end{tikzcd} \; + \;
\begin{tikzcd}[row sep=0em, column sep=tiny]
1& \beta & \alpha &1 \\
    {}  \arrow[r, mapsfrom, maps to, start anchor={center}, end anchor={center}]& {}\arrow[r, mapsfrom, maps to, start anchor={center}, end anchor={center}]& {} \arrow[r, mapsfrom, maps to, start anchor={center}, end anchor={center}]&{}
\end{tikzcd} \]
is mapped by the iterated integral to
\[
It(\underline{\alpha}\odot \underline{\beta}) = \int_0^1 \int_{t_1}^1 \iota_{\bt} \alpha(t_1) \wedge \iota_{\bt} \beta(t_2) \, dt_2 \,dt_1 \, + \,
\int_0^1 \int_{t_1}^1 \iota_{\bt} \beta(t_1) \wedge \iota_{\bt} \alpha(t_2) \, dt_2 \,dt_1 \, .
\]
On the other hand, 
\begin{align*}
It(\underline{\alpha}) \wedge It(\underline{\beta}) &= \int_0^1 \int_0^1 \iota_{\bt} \alpha(t_1) \wedge \iota_{\bt} \beta(t_2) \, dt_2\, dt_1 \\
& = \int_0^1 \int_{t_1}^1 \iota_{\bt} \alpha(t_1) \wedge \iota_{\bt} \beta(t_2) \, dt_2\, dt_1 \,+ \,
\int_0^1 \int_0^{t_1} \iota_{\bt} \alpha(t_1) \wedge \iota_{\bt} \beta(t_2) \, dt_2\, dt_1 \,.
\end{align*}
If $\alpha$ and $\beta$ commute, then $It(\underline{\alpha}\odot \underline{\beta}) = It(\underline{\alpha}) \wedge It(\underline{\beta})$ because the terms $\int_0^1\int_{t_1}^1 \iota_{\bt} \beta(t_1) \wedge \iota_{\bt} \alpha(t_2) \, dt_2 \,dt_1$ and $\int_0^1 \int_0^{t_1} \iota_{\bt}\alpha(t_1) \wedge \iota_{\bt}\beta(t_2) \, dt_2 \, dt_1$ are equal through a change of variables.  However, $\alpha$ and $\beta$ do not commute when they are matrix-valued, and no change of variables will alter the order of multiplication.  For this reason, we create a structure that separates the interval's time ordering from the multiplication ordering.

We call this structure the {\it zigzag algebra} and devote Section \ref{section zigzag} to its construction.  Definition \ref{def CH^ZZ} and Theorem \ref{theorem zigzag is NGA} combine to produce a functor $ZZ:\NGA \to \NGA$;  given any $\A \in \NGA$, there is a well-defined $ZZ(\A) \in \NGA$.  The product structure on $ZZ(\A)$ is an enhancement of the familiar shuffle product, and there is a natural homotopy equivalence $\A \simeq ZZ(\A)$.  The full definition contains several details and subtleties, but the geometric idea is simple.  One can view monomials in the classical 2-sided bar complex $CH^I(\A)$ as elements of $\A$ ordered along the interval $I=[0,1]$.  In the zigzag complex, the interval has been replaced by a sequence of connected intervals that unfold like a staircase.  (In this paper, zigzag is used merely to describe the appearance of monomials, and its use should not be confused with the zigzags of morphisms that appear when localizing a category.)

Section \ref{section iterated integral} focuses on the specific case of $\A = \Omega_{\nabla}(M,\E)$.  We define the desired iterated integral map
\begin{equation}\label{eq:introIt} It: ZZ\big(  \Omega_{\nabla}(M,\E) \big)  \longrightarrow \Omega_{\wt{\nabla}}(PM, \E_0), 
\end{equation}
prove that it is a morphism in $\NGA$, and prove that is a homotopy equivalence.  The parallel transport of the connection plays an important role in the map, and we first prove that a few key formulas from $\Omega(M)$ still hold (appropriately modified) for $\Omega_{\nabla}(M,\E)$.  We attempt to work with bundles in a coordinate-free manner whenever possible; one advantage is that the specific construction of \eqref{eq:introIt} is practically forced on us as the only sensible option.  

Finally, Section \ref{section finale} summarizes the main results and shows how they recover or reprove previously known classical results in the case where $\A \in \NGA$ arises from a commutative dga.  Theorem \ref{theorem zigzag is NGA}, Theorem \ref{thm It is NGA equivalence}, and Proposition \ref{prop Col It commute equiv}, respectively, combine to give the following summary of our paper's results.

\begin{theorem}\
\noindent\begin{enumerate}[(1)]
\item The zigzag construction defines a functor $ZZ:\NGA \to \NGA$, and there are natural homotopy equivalences $\A \simeq ZZ(\A)$.
\item Given any vector bundle with connection $(E,\nabla) \to M$, there is a well-defined iterated integral map \eqref{eq:introIt} that is a morphism of $\NGA$s and sits in the following commutative diagram of homotopy equivalences.
\[ \xymatrix{
\Omega_{\nabla}(M,\End(E)) \ar[rr]_{\simeq}^{ev_0^*} \ar[dr]^{\simeq} &&\Omega_{\wt{\nabla}}(PM, ev_0^* \End(E))  \\
&ZZ\big( \Omega_{\nabla}(M, \End(E)) \big) \ar[ur]^{\simeq}_{It}
}\]
\item Let $\A \in \DGA$ be a commutative dga with $CH^I(\A) \in \DGA \hookrightarrow \NGA$.  Then, the column-collapse map $Col: ZZ(\A) \to CH^I(\A)$ is a homotopy equivalence  in $\DGA \hookrightarrow \NGA$. 
\end{enumerate}
\end{theorem}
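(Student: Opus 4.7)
The plan is to establish each of the three referenced results independently; the omnibus statement then follows by concatenation.

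For part (1), I would first build $ZZ(\A)$ as a graded module from sums of tensor products of $\A$, indexed by combinatorial ``zigzag patterns'' that record time positions separately from multiplication order. I would specify a differential $d_{ZZ}$ combining the internal differential of $\A$ with bar-type combinatorial terms, together with a lifted curvature element, and verify the curved-dga axiom $d_{ZZ}^2 = [R_{ZZ}, -]$ by a diagrammatic check case-by-case on the combinatorial summands. The enhanced shuffle product $\odot_{ZZ}$ interleaves only in the time direction while preserving the multiplication order on each zigzag strand; associativity and the Leibniz rule would be proved formally. Functoriality is then routine, since an $\NGA$-morphism $\A \to \A'$ acts entrywise and commutes with all structure. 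For the natural equivalence $\A \simeq ZZ(\A)$, I would use the unit inclusion $i : \A \hookrightarrow ZZ(\A)$ sending $a \mapsto 1 \otimes a \otimes 1$, a projection $p$ that evaluates the zigzag-combinatorics, and an explicit chain homotopy contracting intervals one at a time.

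For part (2), the iterated integral $It$ on a zigzag monomial is built by restricting each algebra factor to its time slot along a path $\gamma$, inserting the parallel transport of $\nabla$ along the segments of $\gamma$ between consecutive time slots (including the backtracks along zigzag reversals) so that every $\E$-valued form is translated back to $\gamma(0)$, and then integrating against the standard simplex. The zigzag structure is precisely what is needed for the simplex regions to match the enhanced shuffle summands term for term, so that $It$ respects the product without any commutativity hypothesis on $\A$; this is a direct comparison generalizing the introduction's formal calculation. Checking that $It$ intertwines the curved differentials requires balancing the Stokes boundary terms from simplex integration against the curvature contributions produced when differentiating parallel transport, plus the combinatorial pieces of $d_{ZZ}$. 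For the homotopy equivalence, I would separately show that $ev_0^*$ is an $\NGA$-equivalence by retracting $PM$ onto constant paths and using parallel transport along the retraction to transport bundle fibers; then, since the triangle commutes on the image of $i$ by direct computation and $i$ is a homotopy equivalence by part (1), a two-out-of-three argument promotes $It$ to a homotopy equivalence as well.

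For part (3), I would define $Col : ZZ(\A) \to CH^I(\A)$ by taking the wedge product of all algebra elements sharing a common time slot, producing a genuine bar monomial, and summing over the zigzags whose underlying time ordering matches. When $\A$ is commutative the multiplication order within a column is immaterial, so $Col$ descends to a well-defined map of graded algebras and lands in the honest DGA $CH^I(\A) \in \DGA \hookrightarrow \NGA$. Compatibility with the differential uses that the curvature vanishes in the commutative case, so the curved differential on $ZZ(\A)$ reduces to the ordinary bar differential after collapse. The homotopy equivalence of $Col$ would be extracted from the factorization $\A \hookrightarrow ZZ(\A) \xrightarrow{Col} CH^I(\A)$, since both $\A \simeq ZZ(\A)$ (by part (1)) and the classical $\A \simeq CH^I(\A)$ are homotopy equivalences.

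The main obstacle is the differential-compatibility verification in part (2): one must simultaneously track Stokes boundary terms from simplex integration, the curvature terms from commuting $d$ with parallel transport, and the combinatorial terms of $d_{ZZ}$. Even the commutative, flat case of classical Chen theory is intricate, and here the non-commutativity of $\Omega_{\nabla}(M,\E)$ and the non-vanishing of $\nabla^2$ must be reconciled at once; the zigzag bookkeeping introduced in Section \ref{section zigzag} is designed precisely so that these terms cancel or match, but the actual verification is the technical heart of the paper.
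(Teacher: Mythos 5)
Your architecture matches the paper's: construct $ZZ(\A)$ with differential $D_z = \nabla_z + b_z + c_z$ and enhanced shuffle product, verify the curved-dga axiom, define $It$ via parallel-transport conjugation back to the fiber at $\gamma(0)$ followed by simplex integration, and use two-out-of-three to promote $It$ to a homotopy equivalence. However, there is a concrete error in the inclusion map used to drive the whole argument. You propose $i \colon a \mapsto 1 \otimes a \otimes 1$. In the $k=2$, $n=0$ zigzag this places $a$ at position $(1,1)$, i.e.\ at time $t=1$, so after the $(Ad_P)_*$ conjugation one gets $It(i(a)) = P|^0_1 \cdot (ev_1^* a) \cdot P|^1_0$, which is not $ev_0^* a$ in general; and if you instead read it as a zigzag with one interior column ($n=1$), the map is not even degree-preserving and cannot be an algebra morphism. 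Either way, the claim ``the triangle commutes on the image of $i$'' fails, and the two-out-of-three step in part (2) does not go through. The paper's map is $\eta(\omega) = \omega \otimes 1 \otimes 1$, with $\omega$ at the initial vertex $(0,0)$, which is exactly the position that $It$ leaves untransported; then $It(\eta(\omega)) = ev_0^* \omega$ and the diagram commutes. This is an easy fix, but as written the proposal does not establish part (2).

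Two smaller notes. First, for part (3) your route is genuinely different from the paper's: you factor the classical inclusion as $\A \hookrightarrow ZZ(\A) \xrightarrow{Col} CH^I(\A)$ and run two-out-of-three against the GJP-type equivalence $\A \simeq CH^I(\A)$, whereas the paper runs two-out-of-three in the triangle $ZZ(\Omega(M)) \to CH^I(\Omega(M)) \to \Omega(PM)$ using Chen's theorem for the bottom leg. Your route is cleaner and applies verbatim to any commutative dga, not just $\Omega(M)$; the paper's route is more geometric but is formally stated only for $\Omega(M)$. Second, be careful with the phrase ``the curvature vanishes in the commutative case'': commutativity forces $\nabla^2 = [R,-] = 0$, but it is the hypothesis $\A \in \DGA$ that gives $R = 0$. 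You should also flag that the key analytic input in part (2) — that $\nabla P|^a_b$ is an integral of $R$ conjugated by parallel transport (the paper's formula for $\nabla$ of parallel transport) — is a nontrivial lemma that must be proved, not merely invoked.
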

Note that even in the case of a non-commutative curved dga, $\mathcal{A} = \Omega_{\nabla}(M, \E)$, one could have used the usual 2-sided bar construction with a small change to its differential, and then our iterated integral $It: CH^{I}(\mathcal{A}) \to ev_0^* \mathcal{A}$ (which takes parallel transport into account) would commute with the differentials. However, in this case there is no hope for any algebra structures being preserved, and furthermore the map $Col: ZZ(\mathcal{A}) \to CH^I(\mathcal{A})$ would not commute with the differentials. In this sense the results of Section \ref{section finale} are indeed special to the case where $\A$ is a commutative differential graded algebra.

There are several natural questions and possible directions for future work.  In this paper, we only considered a structure that is analogous to the 2-sided bar complex and models $\Omega_{\wt{\nabla}}(PM, \E_0)$.  The next step that inspired most of this paper is to model $\Omega(LM, \E_0)$ via a structure analogous to the Hochschild complex. In a different direction is modeling $\Omega(P^2M, \mathcal{G})$, the gerbe-valued forms on the $2$-path space of a manifold using a 2-dimensional analog of the zigzag model described in this paper. For abelian ($S^1$-banded) gerbes this was accomplished in \cite{TWZ} by, informally speaking, using a variant of the Hochschild complex where the interval $[0,1]$ is replaced by a square $[0,1]\times[0,1]$.  Similar higher-dimensional analogs of the Hochschild complex were described in \cite{MR2721877}.  The first step in extending the work in the current paper to such a 2d non-abelian analog would be applying Lemma 4.2 from \cite{Glass} to obtain the appropriate version of this paper's Proposition \ref{prop d hol}. A suggested 2d analog of the zigzag model was first described in the PhD thesis \cite{MillerThesis}.  From there, the goal would be to proceed to modeling gerbe-valued forms on 2d analogs of the loop space.  Finally, in all of these future directions, there is a chance to place the correct model structure on the relevant categories ($\NGA$ in this paper) so that the algebraic models for forms on the path, loop, square, and sphere spaces are merely explicit cofibrant replacement constructions that one can use to present the derived hom in curved settings.

It should be noted that a first version of this project appeared on the arXiv as \cite{1505.03192}, however the improvements in this version were so significant it seemed appropriate to treat it as a completely separate paper. 

\textbf{Acknowledgements.}
C.G. would like to thank the Max-Planck-Institut f{\" u}r Mathematik in Bonn, for their support during his visit, as well as St. Joseph's College for their support with a Faculty Summer Research Grant.  The authors would also like to thank Thomas Tradler for inspiring the use of zigzags, as well as Jorge Florez, Joey Hirsch, and Scott Wilson for their helpful comments regarding curved cohomology. 

\section{Curved DGAs}\label{section curved dga}
The Chen iterated integral is a map of differential graded algebras, and the majority of this paper will occur in the more generalized setting of curved differential graded algebras. Following \cite{BD}, we introduce \emph{curved dgas} in this section and provide relevant examples.

\begin{definition}\label{def curved dga}
A \emph{\underline{curved dga}} is a triple, $\left(  \mathcal{A}^{\bullet}, \nabla, R \right)$, where $\mathcal{A}^{\bullet}$ is an $\mathbb{N}$-graded, unital, associative algebra over a field $\mathbbm{k}$, $R \in \mathcal{A}^2$, and $\nabla$ is a degree-one map 
\[ \nabla: \mathcal{A}^{\bullet} \to \mathcal{A}^{\bullet + 1} \]
satisfying 
\begin{enumerate}[(i)]
\item $\nabla(ab) = \left( \nabla a \right) b + (-1)^{\abs{a}} a \left( \nabla b \right) $,
\item $\nabla^2 (a) = [ R, a]$,
\item $\nabla(R) = 0$.
\end{enumerate}
\end{definition}

\begin{remark}
Throughout this paper we follow the standard $\pm$ sign conventions for graded algebras.  For homogeneous elements $a$ and $b$, an operator $D$ is a derivation if it satisfies the ``Leibniz rule'' $D(ab) = (Da)b + (-1)^{|a|} (Db)$, and the commutator bracket is defined $[a,b] \=  a b - (-1)^{|a|  |b|} b  a$.  ``Commutative'' is used in the graded sense; i.e. $a$ and $b$ commute if $ab = (-1)^{|a| |b|} ba$, which is equivalent to $[a,b]=0$.
\end{remark}

We start with two elementary examples of curved dgas: that of differential graded algebras and then an algebraic example. The algebraic example will be used later to illustrate that perturbing the differential need not be an equivalence of curved dgas. 
\begin{example}[DGA as a special case] Any dga $(\A, D)$ is naturally a curved dga with $R=0$.  Conversely, a curved dga $\left(  \mathcal{A}^{\bullet}, \nabla, R \right)$ is also an ordinary dga if $R = 0$.
\end{example}

\begin{example}\label{EX: cdga fdvs}
Let $V$ be a finite dimensional vector space and $\mathcal{A}^{\bullet} = T^{\bullet}(V)$  the free tensor algebra on $V$.  For any vector $v \in V = T^1(V)$, consider the operator $\nabla\= [v, -]$.  This gives the structure of a curved dga $(T^{\bullet}(V), [v, -], v \otimes v)$.  
\end{example}

The next two examples are central to this paper, whereby we associate a curved dga to a vector bundle with connection, and we take this opportunity to introduce notation.

\begin{example}[Vector bundle-valued forms]\label{EX: Omega nabla}
Let $E \to M$ be a vector bundle with connection $\nabla^E$.  The operator $\left(\nabla^E\right)^2 : \Omega^{\bullet}(M, E) \to \Omega^{\bullet+2}(M, E)$ is equal to $(R \wedge -)$, where $R \in \Omega^{2}(M, \End(E))$ is the curvature of $\nabla^E$.  We denote the endomorphism bundle by $\E \= \End(E) \to M$.  The connection $\nabla^E$ naturally induces a connection on $\E$, which we denote by $\nabla$, and these connections are related by  $\nabla = ad_{\nabla^E} = [\nabla^E , -]$.  The operator $\nabla: \Omega^\bullet(M,\E) \to \Omega^{\bullet+1}(M,\E)$ is a derivation, in that it satisfies the graded Leibniz rule, but it need not square to zero.  Instead, ${\nabla}^2 = [R, - ]$, where $R\in \Omega^2(M,\E)$ is the original curvature element of $\nabla^E$.  Finally, $\Omega(M, \E)$ has an associative algebra structure given by the wedge product on forms and the natural composition/product on $\End(E)$; the discussion surrounding \eqref{eq:wedgeproduct} contains additional details on this product.  The triple $\left( \Omega^\bullet(M,\E), {\nabla}, R \right)$ is therefore a curved dga, and we use the notation $\Omega_{\nabla}(M, \E)$ to refer to the entire curved dga structure.
\end{example}

\begin{remark}
Note that $\Omega_{\nabla^E}(M, E)$ is not a curved dga for two reasons.  First, it is not an algebra since there is (a priori) no product structure on $E$.  Second, the square of the covariant derivative is given by $R$, which does not live in $\Omega(M,E)$.  It turns out that $\Omega_{\nabla^E}(M, E)$ is in fact a curved dg \emph{module} over $\Omega_{\nabla}(M, \E)$, but this point will not feature anywhere else in the paper.
\end{remark}

\begin{example}[Forms on the path space with pull-back connection]\label{EX: Omega(PM)}
Let $PM = C^\infty([0,1], M)$ be the space of smooth paths in $M$, and let $ev_0: PM \to M$ be the evaluation map at $0$, which sends a path $\gamma \in PM$ to its basepoint $\gamma(0) \in M$.  Let $(E,\nabla^E)\to M$ with induced $(\E,\nabla)\to M$ be bundles with connection as described in Example \ref{EX: Omega nabla}.  The pullback $ev_0^*(\E, \nabla) \to PM$, which we denote by $(\E_0, \wt{\nabla})\to PM$, naturally gives rise to a curved dga $\Omega_{\wt{\nabla}}(PM, \E_0)$ by the same process  explained in Example \ref{EX: Omega nabla}.  This is due to the natural isomorphism $ev_0^*\left( \End(E) \right) \iso \End(ev_0^*E)$.  The square of $\wt{\nabla}$ is given by $\wt{\nabla}^2 = [ev_0^*R, -]$, though we will sometimes slightly abuse notation and refer to the pullback $ev_0^*R \in \Omega^2(PM, \E_0)$ by $R$ when there is little risk of confusion.
\end{example}

We find the following example instructive, not only in having an explicit example of a curved dga of the type from Example \ref{EX: Omega nabla}, but we also come back to this example in the next section to observe how the connection and curvature are ``seen'' by cohomology for curved dgas.

\begin{example}[Explicit $\E$ example]\label{EX: explicit cdga}
Consider the trivial bundle over $M = \mathbb{R}^2$, $E = \mathbb{R}^2 \times \mathbb{R}^2$.  The endomorphisms of this bundle are given, fiber-wise, by $2\times 2$ real-valued matrices.  We then have $\Omega^{\bullet}(M, \E)=\Omega^{\bullet}(\mathbb{R}^2, \Mat_{2\times2}(\mathbb{R})).$  For our differential we use $\nabla = d + [A, - ]$, where $A$ is the matrix
\[A_p \= \begin{pmatrix}0 & 1 \\ -1 & 0  \end{pmatrix} dx + \begin{pmatrix}0 & 1 \\ 1 & 0 \end{pmatrix}dy,\]
constant with respect to a choice of $p \in \mathbb{R}^2$.  The curvature of this connection is $R = 2 \begin{pmatrix}1 & 0 \\ 0 & -1 \end{pmatrix} dx \wedge dy$.
\end{example}

To close this section, we describe below the categorical setting for much of this paper.
\begin{definition}\label{Def cdga morphism}
A \emph{\underline{morphism of curved dga's}}, $ \left(  \A^{\bullet}, \nabla_{\A}, R_{\A} \right) \to \left(  \B^{\bullet}, \nabla_{\B}, R_{\B} \right)$ is a degree-preserving map of graded algebras $f: \A \to \B$, such that 
\[\left( \nabla_{\B} \circ f\right)(a) =\left( f  \circ \nabla_{\A} \right) (a), \quad R_{\B} = f(R_{\A}).\]
\end{definition}

\begin{definition}
The category $\NGA$ of \emph{\underline{curved differential graded algebras}} is that with objects as described in Definition \ref{def curved dga} and morphisms as described in Definition \ref{Def cdga morphism}. It contains the full subcategory of \emph{\underline{differential graded algebras}}, $\DGA \hookrightarrow \NGA$, which consists of objects of the form $\left(  \mathcal{A}^{\bullet}, d, 0 \right)$.
\end{definition}

\begin{remark}\label{rem:defn of nga}
In \cite[Section 2.7]{BD} there is a more general notion of morphisms between curved dgas, but for the purposes of this paper (see Examples \ref{ex:calculation1} and \ref{ex:calculation2}) we require a stricter notion.  It does turn out, however, that all general morphisms factor through our strict morphisms, followed by a map that is the identity on the graded algebra but perturbs the differential. 
\end{remark}

The assignment of a curved dga to each vector bundle with connection is functorial, and we introduce the appropriate domain category that makes this statement precise.

\begin{definition}
Define $\Vect^{\nabla}$ to be the category whose objects are bundles with connection and whose morphisms are as follows.  Given two objects, $(E_0 \to M_0, \nabla_0)$ and $(E_1 \to M_1, \nabla_1)$, a \emph{morphism of bundles with connection} will be a pair $(f_{10}, \alpha_{10})$, where $f_{10}: M_0 \to M_1$ is a smooth map on the base spaces, and $\alpha_{10}: f_{10}^*\left( E_1 \right) \to E_0$ is a connection-preserving map of bundles over $M_0$.  To be precise, $\alpha_{10}$ satisfies $\nabla_0 \circ \alpha_{10} = \alpha_{10} \circ \left( f_{10}^* \nabla_1  \right)$ as a map of graded vector spaces $\Omega^{\bullet}\left(M_0, f_{10}^*\left( E_1\right)\right) \to \Omega^{\bullet+1}\left(M_0,E_0\right)$.  Given two morphisms, 
\[ (E_0 \to M_0, \nabla_0) \xrightarrow{(f_{10}, \alpha_{10}) } (E_1 \to M_1, \nabla_1) \xrightarrow{(f_{21}, \alpha_{21} )} (E_2 \to M_2, \nabla_2),\]
define their composition by 
\[ (E_0 \to M_0, \nabla_0) \xrightarrow{(f_{21}, \alpha_{21}) \circ (f_{10}, \alpha_{10}) \= ( f_{21} \circ f_{10},\,   \alpha_{10}\circ f^*_{10} \alpha_{21} )} (E_2 \to M_2, \nabla_2).\]
The identity morphism will be the trivial pair of identity maps. 
\end{definition}

\begin{prop}\label{prop:BundleDGAMaps}
The assignment from Example \ref{EX: Omega nabla} that maps each vector bundle with connection $(E \to M, \nabla) \in \Vect^{\nabla}$ to the curved dga $\Omega^{\bullet}_{\nabla}(M, \E) \in \NGA$, and each morphism of bundles with connection $(f_{10}, \alpha_{10}) :(E_0 \to M_0, \nabla_0) \to (E_1 \to M_1, \nabla_1)$ to the morphism of curved dgas 
\begin{align*}
\Omega^{\bullet}_{\nabla_1}(M_1, \E_1) &\to \Omega^{\bullet}_{\nabla_0}(M_0, \E_0)\\
\omega &\mapsto   \left(\alpha_{10}\right)_*\left( f_{10}^* \omega \right),
\end{align*}
defines a functor $\Omega_{\nabla}: \left(\Vect^{\nabla}\right)^{op} \to \NGA$.
\end{prop}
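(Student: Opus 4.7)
The plan is to verify two things in turn: first, that for each morphism $(f_{10}, \alpha_{10}): (E_0,\nabla_0) \to (E_1,\nabla_1)$ in $\Vect^{\nabla}$, the prescribed assignment
$\omega \mapsto (\alpha_{10})_* (f_{10}^* \omega)$ really defines a morphism in $\NGA$ from $\Omega_{\nabla_1}(M_1,\E_1)$ to $\Omega_{\nabla_0}(M_0,\E_0)$; and second, that the assignment respects identities and composition (contravariantly). I would factor the induced map through the intermediate object $\Omega^\bullet(M_0, f_{10}^* \E_1)$, writing it as the composition $f_{10}^* : \Omega^\bullet(M_1, \E_1) \to \Omega^\bullet(M_0, f_{10}^* \E_1)$ followed by $(\alpha_{10})_* : \Omega^\bullet(M_0, f_{10}^* \E_1) \to \Omega^\bullet(M_0, \E_0)$ (where the induced map on endomorphism bundles is obtained from $\alpha_{10}$ in the standard way). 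Each piece is then analyzed separately.

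For the first step, I would check each of the three conditions on a curved dga morphism from Definition \ref{Def cdga morphism}. Degree preservation is clear since both $f_{10}^*$ and $(\alpha_{10})_*$ preserve form-degree. Compatibility with the graded-algebra structure splits into two standard facts: pullback of forms is multiplicative in the wedge product, and the bundle map on endomorphisms induced by $\alpha_{10}$ respects composition, so that the product on $\Omega(M,\End(E))$ (as described in Example \ref{EX: Omega nabla}) is preserved. Commutation with the differentials reduces, via the factorization above, to two sub-statements: $f_{10}^*$ intertwines $\nabla_1$ with the pullback connection $f_{10}^* \nabla_1$ by definition of the pullback connection, and $(\alpha_{10})_*$ intertwines $f_{10}^* \nabla_1$ with $\nabla_0$ by the defining connection-preserving property of $\alpha_{10}$ in $\Vect^{\nabla}$. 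Finally, the curvature condition $R_0 = (\alpha_{10})_*(f_{10}^* R_1)$ follows because $R$ is given by the square of the covariant derivative, and the intertwining established in the previous sentence propagates to $\nabla^2$; alternatively one applies $\nabla_0 \circ \nabla_0 = [R_0, -]$ to the identity endomorphism after transporting along $\alpha_{10}$.

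For the second step, the identity morphism on $(E,\nabla)$ has $f = \mathrm{id}_M$ and $\alpha = \mathrm{id}_E$, both of which act trivially on forms, so the induced $\NGA$-morphism is the identity. For composition, unraveling the definition of $(f_{21},\alpha_{21}) \circ (f_{10},\alpha_{10})$ and using the standard contravariant functoriality of form pullback, $(f_{21}\circ f_{10})^* = f_{10}^* \circ f_{21}^*$, together with naturality of the construction $(\alpha_{10} \circ f_{10}^*\alpha_{21})_* = (\alpha_{10})_* \circ (f_{10}^* \alpha_{21})_*$ and the commutation $(f_{10}^* \alpha_{21})_* \circ f_{10}^* = f_{10}^* \circ (\alpha_{21})_*$ on forms valued in the pulled-back bundle, yields the required equality of induced morphisms in $\NGA$ (in reversed order, as $\Omega_{\nabla}$ takes values in $(\Vect^{\nabla})^{op}$-indexed objects).

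The main obstacle, and the only step deserving genuine care, is the commutation with differentials: it is where the specific hypothesis that $\alpha_{10}$ is connection-preserving gets used, and where the appropriate version of the Leibniz rule on $\End$-valued forms needs to be matched up with its counterparts under both pullback and $\alpha_{10}$-pushforward. The remaining verifications are essentially bookkeeping once the factorization through $f_{10}^*\E_1$ is in place.
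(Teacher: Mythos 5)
The paper states Proposition \ref{prop:BundleDGAMaps} without proof, treating the verification as routine, so there is no paper proof against which to compare; your proposal supplies the proof the paper omits. Your approach is the natural one: factor $\omega \mapsto (\alpha_{10})_*(f_{10}^*\omega)$ through the intermediate $\Omega^{\bullet}(M_0, f_{10}^*\E_1)$, check each of the three requirements in Definition \ref{Def cdga morphism} separately, and then verify identities and composition. The degree, algebra, and differential checks are correct and carefully located where the hypothesis $\nabla_0 \circ \alpha_{10} = \alpha_{10}\circ(f_{10}^*\nabla_1)$ does the work.

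Two small remarks. First, your ``alternative'' justification of the curvature identity --- applying $\nabla_0\circ\nabla_0 = [R_0,-]$ to the identity endomorphism --- does not actually give information, since $[R_0,\mathrm{id}]=0$; fortunately your primary argument is sound. The clean route is to work with $\nabla^E$ on $E$ rather than $\nabla$ on $\End(E)$: squaring the connection-preserving identity gives $(R_0\wedge -)\circ\alpha_{10} = \alpha_{10}\circ((f_{10}^*R_1)\wedge -)$ on $E$-valued forms, whence $R_0 = \alpha_{10}\cdot(f_{10}^*R_1)\cdot\alpha_{10}^{-1} = (\alpha_{10})_*(f_{10}^*R_1)$ as sections of $\Omega^2(M_0,\End(E_0))$. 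Second, note that for $(\alpha_{10})_*$ to define a map on $\End$-valued forms that is multiplicative (and for the inverse used above to exist), $\alpha_{10}$ must be a fiberwise isomorphism; the paper's definition of $\Vect^{\nabla}$ says only ``map of bundles,'' and this implicit assumption is inherited by both the proposition and your proof, so it is worth flagging rather than a gap in your argument per se.
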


As a brief application, we can reformulate the content of Examples \ref{EX: Omega nabla} and \ref{EX: Omega(PM)}. The morphism in $\Vect^{\nabla}$
\[ (\E \to M, \nabla) \xleftarrow{(ev_0, id)} (\E_0 \to PM,\widetilde{\nabla} )\]
induces a morphism in $\NGA$
\[  \Omega_{\nabla}(M, \E) \xrightarrow{\Omega_{\nabla}\left( ev_0, id \right) } \Omega_{\widetilde{\nabla}}(PM, \E_0), \]
which we recognize as the evaluation-pull-back of forms $ev_0^*$.

It is worth noting that, if we consider the full subcategories of vector bundles with flat connections $\Vect_{\mathrm{flat}}^{\nabla} \hookrightarrow \Vect^{\nabla}$ and differential graded algebras $\DGA \hookrightarrow \NGA$, we have the following diagram of functors, which commutes up to canonical isomorphism.
\begin{equation}\label{eq:FlatVectIntoVect}
\begin{tikzcd}
\Vect^{\nabla} \arrow[r, "{\Omega_{\nabla}}"]  & \NGA \\
\Vect_{\mathrm{flat}}^{\nabla} \arrow[u, hook] \arrow[r, "{\Omega_{\nabla}}"]   & \DGA  \arrow[u, hook]
\end{tikzcd}
\end{equation}

\section{Homotopy and Cohomology in $\NGA$}\label{section cohomology}
In pursuit of an appropriate notion of weak-equivalence in our category of curved differential graded algebras $\NGA$, we introduce a (curved) cohomology functor that lands in the category of graded algebras.  Since our differential does not square to zero, we can either adopt a variant on the concept of the \emph{kernel} and proceed as usual, or we can consider the maximal sub-dga.  We will define both and show they are equivalent.

\begin{definition}
Given a curved dga $(\mathcal{A}^{\bullet}, \nabla^{\bullet}, R)$, define its \emph{\underline{curved cohomology}} to be the graded abelian groups
\[ H^P_{cur}(\mathcal{A}) \= cur(\nabla^p) / im(\nabla^{p-1}), \]
where $ cur(\nabla^p) \= \{ a \in \mathcal{A}^p \ \vert \ \nabla{a} = [ R, \eta] \  \text{for some} \  \eta \in \A^{p-1} \}.$
\end{definition}

\begin{definition}
Given a curved dga, $(\mathcal{A}^{\bullet}, \nabla^{\bullet}, R)$, define its \emph{\underline{maximal sub-dga}}  $\widetilde{(\A^{\bullet}}, \nabla^{\bullet})$ by 
\[\widetilde{ \A^{\bullet}}\= \{ a \in \mathcal{A}^{\bullet} \ \vert \ \nabla^2(a) = 0 \}. \] 
\end{definition}

\begin{remark}It is easy to see these are both well-defined.  The image $im(\nabla^{p-1})$ is a subgroup of the curved kernel $cur(\nabla^p)$; if $\nabla (\eta) \in im(\nabla^{p-1})$ then $\nabla(\nabla(\eta)) = \nabla^2(\eta) = [R,\eta] \in cur(\nabla^p)$.  Both groups are abelian, therefore the quotient $H^p_{cur}(\A)$ is a well-defined abelian group.  That $\wt{\A^\bullet}$ is an algebra, i.e. it's closed under products, follows from the general formula $\nabla^2(ab) = (\nabla^2 a)b + a (\nabla^2b)$. The two $(\nabla a)(\nabla b)$ terms cancel for degree reasons. 
\end{remark}

\begin{prop}\label{cdga and maximal sub dga same cohomology}
 There is an isomorphism between the curved cohomology $H^p_{cur}(\A)$ and the cohomology of the maximal sub-dga $H^p(\widetilde{ \A})$.
\begin{proof}
The map $H^p(\widetilde{ \A}) \to H^p_{cur}(\mathcal{A})$  induced by the inclusion $ker(\nabla) \hookrightarrow cur(\nabla)$ is an isomorphism.  It is straightforward to check that the map $H^p_{cur}(\mathcal{A}) \to H^p(\widetilde{ \A})$ given by $[\omega] \mapsto [\omega - \nabla(\eta)]$, where $\nabla(\omega) = [R, \eta]$, is an inverse.
\end{proof}
\end{prop}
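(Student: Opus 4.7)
The plan is to show directly that the map $H^p(\wt{\A}) \to H^p_{cur}(\A)$ induced by the inclusion $\wt{\A}^\bullet \hookrightarrow \A^\bullet$ is a bijection, rather than construct an inverse from scratch. First I would verify this map is well-defined: any $a \in \wt{\A}^p$ with $\nabla a = 0$ satisfies $\nabla a = [R, 0]$ so $a \in cur(\nabla^p)$; and if $a = \nabla b$ with $b \in \wt{\A}^{p-1}$ then $a$ is already a priori in $\mathrm{im}(\nabla^{p-1})$. Hence the inclusion descends to a homomorphism of graded groups $H^p(\wt{\A}) \to H^p_{cur}(\A)$.

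For surjectivity, take $[\omega] \in H^p_{cur}(\A)$ represented by $\omega \in cur(\nabla^p)$ with $\nabla \omega = [R, \eta]$ for some $\eta \in \A^{p-1}$, and consider the perturbed representative $\omega' \coloneqq \omega - \nabla \eta$. A direct computation gives $\nabla \omega' = [R,\eta] - \nabla^2 \eta = [R,\eta]-[R,\eta]=0$, so $\omega' \in \wt{\A}^p$ is a genuine cocycle in the maximal sub-dga, while $\omega - \omega' = \nabla \eta \in \mathrm{im}(\nabla^{p-1})$ shows $[\omega'] \mapsto [\omega]$. For injectivity, suppose $a \in \wt{\A}^p$ with $\nabla a = 0$ vanishes in $H^p_{cur}(\A)$, so $a = \nabla b$ for some (a priori arbitrary) $b \in \A^{p-1}$. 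Then $0 = \nabla a = \nabla^2 b = [R,b]$, which forces $\nabla^2 b = 0$; hence $b$ actually lies in $\wt{\A}^{p-1}$ and $a$ is a coboundary in the maximal sub-dga, giving $[a] = 0$ in $H^p(\wt{\A})$.

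The main subtlety to watch for is that the assignment $[\omega] \mapsto [\omega - \nabla \eta]$ sketched in the statement is independent of the choice of $\eta$: if $[R,\eta_1] = [R,\eta_2] = \nabla \omega$, then $\xi \coloneqq \eta_2 - \eta_1$ satisfies $[R,\xi]=0$, so $\nabla^2 \xi = 0$ and $\xi \in \wt{\A}^{p-1}$; therefore $\nabla\xi = (\omega - \nabla\eta_1) - (\omega - \nabla\eta_2)$ is a genuine coboundary inside $\wt{\A}$, and the two candidate representatives agree in $H^p(\wt{\A})$. With this verified, the explicit map $[\omega] \mapsto [\omega - \nabla \eta]$ furnishes a two-sided inverse to the inclusion-induced map and completes the proof.
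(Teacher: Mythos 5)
Your proof is correct and takes essentially the same approach as the paper's: you verify that the inclusion-induced map on cohomology is a bijection, with the key steps (perturbing $\omega$ to $\omega - \nabla\eta$ to kill the curvature term, and observing that $[R,b]=0$ forces $b \in \wt{\A}$ in the injectivity step) being exactly the content that the paper's terse proof asks the reader to "check." The paper phrases it as producing a two-sided inverse while you argue bijectivity directly; these are minor organizational variants of the same argument, and your write-up usefully spells out the details the paper elides.
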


Note that the algebra structure on $\A$ does not immediately induce an algebra structure on the groups $H^\bullet_{cur}(\A)$, since $cur(\nabla)$ is not closed under products.  However, $H^\bullet(\wt{\A})$ is naturally an algebra since $\wt{\A} \in \DGA$.  The groups $H^\bullet_{cur}(\A)$ therefore inherit an algebra structure via the isomorphism $H^\bullet_{cur}(\A) \iso H^\bullet(\wt{\A})$ from Proposition \ref{cdga and maximal sub dga same cohomology}.  

\begin{prop}\label{PROP cur coh is cdga}
The curved cohomology of a curved dga is a graded algebra. 
\end{prop}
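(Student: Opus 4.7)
The plan is to transport the graded algebra structure from $H^\bullet(\wt{\A})$ to $H^\bullet_{cur}(\A)$ via the isomorphism established in Proposition \ref{cdga and maximal sub dga same cohomology}. The key point is that we should not try to define the multiplication directly on $H^\bullet_{cur}(\A)$, since (as I explain below) the curved kernel $cur(\nabla)$ is not itself closed under the algebra product in $\A$. The maximal sub-dga $\wt{\A}$ circumvents this obstruction, because it is an honest cochain complex whose cohomology carries the standard induced algebra structure.

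First I would verify that $\wt{\A}$ is a genuine object of $\DGA$. Closure under products is already noted in the remark preceding the proposition: if $\nabla^2 a = 0$ and $\nabla^2 b = 0$ then $\nabla^2(ab) = (\nabla^2 a)b + a(\nabla^2 b) = 0$, where the cross terms $(\nabla a)(\nabla b)$ cancel by a sign. Next I would check that $\nabla$ preserves $\wt{\A}$: for $a \in \wt{\A}$, using $\nabla R = 0$ gives
\[
\nabla^2(\nabla a) \;=\; \nabla^3 a \;=\; \nabla(\nabla^2 a) \;=\; \nabla(0) \;=\; 0,
\]
so $\nabla a \in \wt{\A}$. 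The restricted $\nabla$ remains a degree-one derivation (inherited from $\A$) and squares to zero on $\wt{\A}$ by definition, so $(\wt{\A}, \nabla) \in \DGA$. Consequently $H^\bullet(\wt{\A})$ is a graded algebra by the usual argument for dgas.

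I would then invoke Proposition \ref{cdga and maximal sub dga same cohomology} to transport this structure: given classes $[a], [b] \in H^\bullet_{cur}(\A)$, use the isomorphism $H^\bullet_{cur}(\A) \overset{\cong}{\to} H^\bullet(\wt{\A})$ sending $[\omega] \mapsto [\omega - \nabla\eta]$ (where $\nabla\omega = [R, \eta]$) to push to $H^\bullet(\wt{\A})$, multiply there, and pull back via the inverse iso induced by $\ker(\nabla) \hookrightarrow cur(\nabla)$. Well-definedness of the product on $H^\bullet_{cur}(\A)$ is automatic because both maps are isomorphisms of abelian groups.

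The main conceptual obstacle is explaining \emph{why} a direct definition of the product on representatives in $cur(\nabla)$ fails. For $a \in cur(\nabla^p)$ and $b \in cur(\nabla^q)$ with $\nabla a = [R, \eta_a]$ and $\nabla b = [R, \eta_b]$, the Leibniz rule gives
\[
\nabla(ab) \;=\; [R, \eta_a]\,b \,+\, (-1)^{|a|} a\,[R, \eta_b],
\]
which is not manifestly of the form $[R, \eta]$ for any single $\eta \in \A^{p+q-1}$, since $R$ does not graded-commute with $a$ or $b$ in general. This is precisely the reason the argument must route through $\wt{\A}$ rather than stay inside $cur(\nabla)$, and it is why the isomorphism of Proposition \ref{cdga and maximal sub dga same cohomology} is doing real work here.
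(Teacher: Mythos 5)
Your proof takes essentially the same approach as the paper, which defines the product on $H^\bullet_{cur}(\A)$ by transporting the standard dga-cohomology product on $H^\bullet(\wt{\A})$ across the isomorphism of Proposition \ref{cdga and maximal sub dga same cohomology}, having observed just before the statement that $cur(\nabla)$ is not closed under products. One small remark: your check that $\nabla$ preserves $\wt{\A}$ is correct but does not actually require $\nabla R = 0$, since the chain $\nabla^2(\nabla a) = \nabla(\nabla^2 a) = \nabla(0) = 0$ follows purely from associativity of composition and $a \in \wt{\A}$.
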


\begin{prop}\label{prop: curved coh is a functor}
The curved cohomology provides a functor from the category of curved dga's  to the category of graded algebras.
\begin{proof}
By the previous Proposition \ref{PROP cur coh is cdga}, we have an assignment of objects.  To show that this assignment respects morphisms, we consider two curved dgas and a morphism between them: 
\[ (\mathcal{A}, \nabla, R) \xrightarrow{g} (\mathcal{B},\Theta, F).\] 
By the definition of $g$ being a morphism of curved dgas, it is evident that $g$ preserves both the curved kernel and the image of the curved differentials.
\end{proof}
\end{prop}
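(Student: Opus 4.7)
The object assignment is already handled by Proposition \ref{PROP cur coh is cdga}, so the plan is to define the induced map on morphisms and then verify it is well-defined, algebra-preserving, and functorial. Given a morphism $g: (\A, \nabla, R) \to (\B, \Theta, F)$ in $\NGA$, my first step is to check that $g$ sends $cur(\nabla^p)$ into $cur(\Theta^p)$: if $a \in \A^p$ satisfies $\nabla a = [R, \eta]$ for some $\eta \in \A^{p-1}$, then since $g$ is a morphism of graded algebras with $\Theta \circ g = g \circ \nabla$ and $g(R) = F$, I compute
\[ \Theta(g(a)) = g(\nabla a) = g([R,\eta]) = [g(R), g(\eta)] = [F, g(\eta)], \]
witnessing $g(a) \in cur(\Theta^p)$. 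The inclusion $g(im(\nabla^{p-1})) \subseteq im(\Theta^{p-1})$ is immediate from $g \circ \nabla = \Theta \circ g$. Together these give a well-defined induced map $H^p_{cur}(g): H^p_{cur}(\A) \to H^p_{cur}(\B)$ on the quotients.

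Next I need to verify that $H^p_{cur}(g)$ respects the graded algebra structure. Since that structure was defined via the isomorphism with $H^\bullet(\wt{\A})$ from Proposition \ref{cdga and maximal sub dga same cohomology}, the plan is to show $g$ restricts to a $\DGA$-morphism $\wt{g}: \wt{\A} \to \wt{\B}$ and then check naturality of the isomorphism. For the restriction, observe that if $a \in \wt{\A}$ then
\[ \Theta^2(g(a)) = \Theta(g(\nabla a)) = g(\nabla^2 a) = 0, \]
so $g(a) \in \wt{\B}$, and $\wt{g}$ is clearly an algebra map of cochain complexes. Naturality of the isomorphism $H^p(\wt{\A}) \to H^p_{cur}(\A)$, which is induced by the inclusion $\ker(\nabla) \hookrightarrow cur(\nabla)$, is immediate because both $g$ and $\wt{g}$ are restrictions of the same underlying map. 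Consequently $H^p_{cur}(g)$ is a map of graded algebras.

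Finally, functoriality is a short verification: the identity morphism on $\A$ clearly induces the identity on $H^\bullet_{cur}(\A)$, and for a composable pair $g, h$ the equality $H^\bullet_{cur}(h \circ g) = H^\bullet_{cur}(h) \circ H^\bullet_{cur}(g)$ follows directly from the definition of the induced maps on representatives. The main point to watch, and the only nontrivial step in the argument, is the use of $g(R) = F$ in the first displayed calculation above; without the strict morphism condition of Definition \ref{Def cdga morphism} (compare Remark \ref{rem:defn of nga}), the assignment $a \mapsto g(a)$ would fail to send $cur(\nabla)$ into $cur(\Theta)$, and the functor would not exist on the more general category of curved dgas referenced in \cite{BD}.
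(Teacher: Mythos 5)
Your proposal is correct and follows essentially the same route as the paper's (very terse) proof: show that a strict $\NGA$-morphism $g$ preserves the curved kernel and the image, hence descends to curved cohomology. You go somewhat further than the paper in one useful respect: the paper asserts only that $g$ preserves $cur(\nabla)$ and $im(\nabla)$, which gives a map of graded abelian groups, whereas the algebra structure on $H^\bullet_{cur}$ was defined by transport along the isomorphism with $H^\bullet(\wt{\A})$; your step checking that $g$ restricts to a $\DGA$-morphism $\wt{g}\colon\wt{\A}\to\wt{\B}$ and that the comparison isomorphism is natural is exactly what is needed to confirm the induced map is multiplicative, and the paper leaves this implicit. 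Your closing observation about the necessity of $g(R)=F$ also correctly pinpoints why the strict morphism condition of Definition \ref{Def cdga morphism} matters (cf.\ Remark \ref{rem:defn of nga}).
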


At this point we should note that our curved cohomology is reduced to the usual cohomology when a curved dga is actually an ordinary dga.  More precisely, $\DGA \hookrightarrow \NGA$ is a full subcategory, and the cohomology functor for dgas factors through the curved cohomology functor for curved dgas.  Furthermore, this is compatible with the inclusion of flat vector bundle connections from  \eqref{eq:FlatVectIntoVect}.

\begin{prop}\label{Prop flat comm diagram}
Using the above notation, the following is a commutative diagram of functors.
\begin{equation}
\begin{tikzcd}
\Vect^{\nabla} \arrow[r, "{\Omega_{\nabla}}"]  & \NGA \arrow[r, "{H^{\bullet}_{cur}}"]  & \mathrm{Alg}^{\bullet}_{\mathbbm{k}} \\
\Vect_{\mathrm{flat}}^{\nabla} \arrow[u, hook] \arrow[r, "{\Omega_{\nabla}}"]   & \DGA \arrow[ru, "{H^{\bullet}}"']  \arrow[u, hook]& 
\end{tikzcd}
\end{equation}
\end{prop}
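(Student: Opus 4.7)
The plan is to verify the commutativity of the two separate pieces of the diagram: the left-hand square between the bundle and algebra categories, and the right-hand triangle between $\NGA$, $\DGA$, and graded algebras. Both will follow essentially from the observation that the flatness condition forces the curvature element $R$ to vanish, which in turn trivializes the distinction between curved and ordinary cohomology.

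For the left square, commutativity up to canonical isomorphism was already recorded as \eqref{eq:FlatVectIntoVect}, but the argument is worth restating briefly: if $(E,\nabla) \in \Vect^\nabla_{\mathrm{flat}}$, then the induced curvature $R \in \Omega^2(M,\E)$ of Example \ref{EX: Omega nabla} vanishes, so the triple $(\Omega^\bullet(M,\E),\nabla,0)$ lies in the full subcategory $\DGA \hookrightarrow \NGA$. Morphisms are handled by Proposition \ref{prop:BundleDGAMaps}: a morphism $(f_{10},\alpha_{10})$ in $\Vect^\nabla_{\mathrm{flat}}$ sends curvature to curvature, which is automatic since both are zero, so the assignment is a $\DGA$-morphism.

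For the right triangle, fix $\A = (\A^\bullet, d, 0) \in \DGA$ viewed as an object in $\NGA$. Since $R = 0$, the curved kernel simplifies to $cur(d^p) = \{a \in \A^p : d(a) = [0,\eta] \text{ for some } \eta\} = \ker(d^p)$, so $H^p_{cur}(\A) = \ker(d^p)/\mathrm{im}(d^{p-1}) = H^p(\A)$ as graded abelian groups. For the algebra structure, recall that $H^\bullet_{cur}(\A)$ inherits its product via the isomorphism $H^\bullet_{cur}(\A) \cong H^\bullet(\wt{\A})$ from Proposition \ref{cdga and maximal sub dga same cohomology}. When $R = 0$ we have $d^2 = 0$ on all of $\A$, so $\wt{\A} = \A$ and the induced algebra structure on $H^\bullet_{cur}(\A)$ agrees with the usual one on $H^\bullet(\A)$. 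Naturality is immediate: a morphism $f:\A\to\B$ in $\DGA$ induces the same map on cocycles modulo coboundaries whether one uses $H^\bullet_{cur}$ or $H^\bullet$.

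The main obstacle is essentially nil; this is a bookkeeping verification once the vanishing of $R$ on the flat side and the identification $\wt{\A} = \A$ on the $\DGA$ side are noted. The only mild subtlety is keeping track of where the algebra structure on $H^\bullet_{cur}$ comes from, since it is defined indirectly through the maximal sub-dga rather than intrinsically on $cur(\nabla)$ (which is not closed under products in general). Once one unwinds this definition in the case $R = 0$, the triangle commutes on the nose.
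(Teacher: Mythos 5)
Your proposal is correct, and it fills in exactly the verification the paper omits: the paper states Proposition \ref{Prop flat comm diagram} without a proof, treating it as routine once the observations you make are in hand (namely that flatness gives $R=0$ so the left square commutes via \eqref{eq:FlatVectIntoVect}, and that when $R=0$ one has $cur(\nabla)=\ker(d)$ and $\wt{\A}=\A$ so the two cohomologies and their algebra structures agree). Your care in tracing the algebra structure on $H^\bullet_{cur}$ back through the isomorphism with $H^\bullet(\wt{\A})$ is the right thing to flag, since $cur(\nabla)$ is not itself closed under products, and this is precisely the bookkeeping the paper silently relies on.
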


Perturbing the differential can lead to non-isomorphic curved cohomology groups, as demonstrated in the following examples.  This illustrates one of the reasons we use our stricter notion of morphism, as discussed in Remark \ref{rem:defn of nga}.

\begin{example}\label{ex:calculation1}
Recall the curved dga, $\mathscr{A} = (T^{\bullet}(V), [v, -], v \otimes v)$, from Example \ref{EX: cdga fdvs}.  Next consider the curved dga, $\mathscr{B} = (T^{\bullet}(V),0, 0)$, which is obtained by perturbing the differential.  While $\mathscr{A}$ and $\mathscr{B}$ are considered isomorphic in other contexts, they are not even quasi-isomorphic in our category $\NGA$.  It is clear that $H^{\bullet}_{cur}(\mathscr{B}) = T^{\bullet}(V)$, and $H^{0}_{cur}(\mathscr{A}) = T^{0}(V)$.  But the higher cohomology groups are not isomorphic if $\dim V>1$, as  $\dim H^{k}_{cur}(\mathscr{A}) <  \dim H^{k}_{cur}(\mathscr{B}) = \dim T^k(V)$ for $k >0$.  To see this, let $w\in V$ be linearly independent from $v$.  For $k=1$, note that $[v,w] \neq 0$, and hence $cur(\mathscr{A}^1)$ is a proper subspace of $T^1(V)$.  For $k\geq 2$, note that $cur(\mathscr{A}^k) \subseteq T^k(V)$, and hence $\dim H^{k}_{cur}(\mathscr{A}) \leq \dim T^k(V)$.   Then, $[v, w^{\otimes (k-1)}] \neq 0$, so $\dim im([v,-]) > 0$, and thus $\dim H^{k}_{cur}(\mathscr{A}) < \dim T^k(V)$. \end{example}

Proposition \ref{Prop flat comm diagram} says that our curved cohomology, at the very least, includes all  the usual information that ordinary cohomology gives about flat vector bundles and their induced differential graded algebras.  The next example goes one step further to demonstrate that the theory can detect curvature of a connection, even in the case of a product bundle over a contractible base.

\begin{example}\label{ex:calculation2}
Consider the curved dgas $\mathscr{A} = \left( \Omega^{\bullet}(\mathbb{R}^2, \Mat_{2\times2}(\mathbb{R})), \nabla = d_{DR}, 0\right)$ and\\
$\mathscr{B} = \left( \Omega^{\bullet}(\mathbb{R}^2, \Mat_{2\times2}(\mathbb{R})), \nabla = d_{DR} + [A, -], R\right)$, where $A$ and $R$ are defined in Example \ref{EX: explicit cdga}.  In this case, $\mathscr{A}$ is an ordinary dga and isomorphic to $\Omega(\mathbb{R}^2)\otimes \Mat_{2\times 2}(\mathbb{R})$, so $H^k(\mathscr{A})=0$ for $k>0$.  On the other hand, one can show that $H^1(\mathscr{B})\neq 0$ by verifying, with a little bit of work, that 
 \[\omega \= \begin{bmatrix} 0&0\\1&0\end{bmatrix} (dx + dy)\]
is curved-closed but not exact.
\end{example}

Chain homotopies induce quasi-isomorphisms in the usual context of differential graded algebras.  Under the exact same formula to define homotopy, this fact  generalizes to curved dgas.

\begin{definition}\label{defn:ChainHtpy}
Two $\NGA$ morphisms $f,g: (\mathcal{A}, \nabla, R) \to (\mathcal{B},\Theta, F)$ are said to be \emph{(chain) homotopic}  $f\simeq g$ if there exists a degree $-1$ map, $h: \mathcal{A}^n \to \mathcal{B}^{n-1}$, such that 
\[f - g = \Theta \circ h + h \circ \nabla. \]
Moreover, a map of curved dgas $f: \A \to \B$ is a \emph{homotopy equivalence} if there exists a map of curved dgas $g: \B \to \A$ such that $g \circ f \simeq id_{\A}$ and $f \circ g \simeq id_{\B}$. 
\end{definition}

All three of the following statements can be proved, symbol for symbol, by the exact same arguments used for cochain complexes.  The proofs are straightforward enough that most textbooks leave them as homework exercises.  None of the properties rely on $d^2=0$, though the second property does use that morphisms commute with the differential, something we required in Definition \ref{Def cdga morphism} for $\NGA$ morphisms.  

\begin{prop}\label{prop:HomotopyEquivalences} Let $\A, \B, \C \in \NGA$.
\noindent\begin{enumerate}[(1)]
 \item Homotopy equivalence $\simeq$ is an equivalence relation on $\NGA(\A,\B)$.
 \item Homotopies in $\NGA$ are closed under composition; i.e. if $f \simeq f' \colon \A \to \B$ and $g\simeq g' \colon \B \to \C$, then  $g \o f \simeq g' \o f'$.
 \item Given $\NGA$ morphisms  $f \colon \A \to \B$ and $g \colon \B\to \C$, if two of the three morphisms in $\{f, g, g \o f\}$ are homotopy equivalences, then so is the third.
\end{enumerate}
\begin{proof}
The first two statements follow from the exact same proofs for cochain complexes, which can be found in the course notes \cite[Lecture 10]{QuickNotes}.  We quickly prove the third property for the case used later. Assume that $f$ and $k \=gf$ are homotopy equivalences with homotopy inverses $\wt{f}$ and $\wt{k}$.  Then $\wt{g} \= f\wt{k}$ is a homotopy inverse for $g$, since $f\wt{k}g \simeq f\wt{k}gf \wt{f} \simeq f \wt{f}\simeq id_{\B}$ and $g f\wt{k} \simeq id_{\C}$.
\end{proof}
\end{prop}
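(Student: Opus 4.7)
The plan is to follow the standard cochain-complex proofs, which carry over verbatim because the homotopy formula $f - g = \Theta \o h + h \o \nabla$ never invokes $\nabla^2 = 0$. For part (1), I would verify the three equivalence-relation axioms directly from the definition: reflexivity uses the zero homotopy, symmetry negates an existing homotopy, and transitivity adds two homotopies (if $f - g = \Theta h_1 + h_1 \nabla$ and $g - e = \Theta h_2 + h_2 \nabla$, then $f - e = \Theta(h_1+h_2) + (h_1+h_2)\nabla$). These are one-line algebraic manipulations that never interact with the curvature element $R$.

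For part (2), I would use (1) to reduce to the two separate claims $g\o f \simeq g \o f'$ and $g \o f' \simeq g' \o f'$. For the first, given $f - f' = \Theta_\B h + h \nabla_\A$, pre-composing with $g$ yields $gf - gf' = g \Theta_\B h + g h \nabla_\A = \Theta_\C(gh) + (gh) \nabla_\A$, where the identity $g \o \Theta_\B = \Theta_\C \o g$ is exactly the condition from Definition \ref{Def cdga morphism} that $\NGA$ morphisms commute with the differential. The second claim is symmetric, post-composing a homotopy between $g$ and $g'$ with $f'$ and using $\nabla_\B \o f' = f' \o \nabla_\A$. This is precisely where the strict notion of morphism is essential, as foreshadowed in Remark \ref{rem:defn of nga}.

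For part (3), the author has already handled the case in which $f$ and $k \= g\o f$ are homotopy equivalences. I would dispatch the remaining two cases by the same construction of explicit homotopy inverses, invoking part (2) to compose homotopies. If $f$ and $g$ are both homotopy equivalences with inverses $\wt f$ and $\wt g$, then $\wt f \o \wt g$ is a homotopy inverse for $g \o f$: one has $(\wt f \wt g)(gf) = \wt f (\wt g g) f \simeq \wt f \o f \simeq id_\A$, and symmetrically for the other composite. If $g$ and $k = g\o f$ are homotopy equivalences with inverses $\wt g$ and $\wt k$, then $\wt k \o g$ is a homotopy inverse for $f$: $(\wt k g) f = \wt k k \simeq id_\A$, while $f (\wt k g) \simeq \wt g g f \wt k g = \wt g k \wt k g \simeq \wt g g \simeq id_\B$.

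The main obstacle is purely bookkeeping — keeping track of which side of each homotopy gets pre- or post-composed — rather than any genuine mathematical subtlety. The curved setting contributes no new difficulty because every step in the argument respects the single identity $\nabla^2(\cdot) = [R,\cdot]$ automatically: at no point does one need to differentiate a chain homotopy twice or cancel $\nabla^2$ terms. The one conceptual point worth flagging explicitly is that part (2) genuinely requires the strict $\NGA$ morphisms of Definition \ref{Def cdga morphism}; under the looser morphisms of \cite{BD}, the computation $g \o \Theta_\B = \Theta_\C \o g$ would fail and the proof would not go through.
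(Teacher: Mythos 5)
Your proposal is correct and follows the same standard cochain-complex argument the paper invokes, with the same key observation that none of the manipulations ever need $\nabla^2=0$ and that part (2) hinges on the strict $\NGA$ morphisms commuting with the differential. The only difference is one of completeness: you spell out parts (1) and (2) and all three cases of (3), whereas the paper cites a reference for (1)--(2) and proves only the single case of (3) that is used later (namely $f$ and $g\o f$ being equivalences implies $g$ is).
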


We now justify our use of language by proving that homotopy equivalences between curved dgas induce isomorphisms in their curved cohomology.

\begin{prop}\label{prop:AlgChainHomotopy} Let $f, g$ be $\NGA$ morphisms $\A \to \B$.
\noindent\begin{enumerate}[(1)] 
 \item If $f \simeq g$, the two homorphisms $H^\bullet_{cur}(\A) \to H^\bullet_{cur}(\B)$ induced by $f$ and $g$ are equal.
 \item A homotopy equivalence $f :\A \xrightarrow{\simeq} \B$ induces an isomorphism $H^\bullet_{cur}(\A) \xrightarrow{\iso} H^\bullet_{cur}(\B)$.
\end{enumerate}
\begin{proof}
Let $f,g: (\mathcal{A}, \nabla, R) \to (\mathcal{B},\Theta, F)$ be two maps of curved dgas, and let $h: \mathcal{A}^n \to \mathcal{B}^{n-1}$ be a chain homotopy in the sense that $f - g = \Theta \circ h + h \circ \nabla$.  Then considering a class $[\alpha] \in H^p_{cur} \left( \mathcal{A}\right)$, with $\nabla \alpha = [ R, \eta] = \nabla^2 \eta$ for some $\eta \in \mathcal{A}^{p-1}$, we show the difference of the induced maps on cohomology is zero.\begin{align*}
(f-g)[ \alpha] &= [(f -g) (\alpha)] = [\left(  \Theta \circ h + h \circ \nabla \right) (\alpha) ]\\
&= [ \Theta \left(h \alpha \right) + h \left( \nabla \alpha \right) ] = [ \Theta \left(h \alpha  \right) + h \left( \nabla^2 \eta \right) ]
\intertext{applying $f - g - \Theta \circ h = h \circ \nabla$,}
&=  [ \Theta \left(h \alpha  \right) + \left(f - g - \Theta \circ h \right)  \left( \nabla^2 \eta \right)  ]\\
&=  [ \Theta \left(h \alpha  \right) + f\circ \nabla (\nabla \eta)  - g\circ \nabla (\nabla \eta) - \Theta \circ h \left( \nabla^2 \eta \right)  ]
\intertext{and now using that $f$ and $g$ are commute with the curved differentials,}
&=  [ \Theta \left(h \alpha  \right) + \Theta \circ f (\nabla \eta)  - \Theta \circ  g (\nabla \eta) - \Theta \circ h \left( \nabla^2 \eta \right)  ] \\
&= [ \Theta \left(h \alpha +  f (\nabla \eta)  -   g (\nabla \eta) -  h \left( \nabla^2 \eta \right)  \right)]=0.
\end{align*}
The second statement then follows immediately from the first.
\end{proof}
\end{prop}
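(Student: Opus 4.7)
The plan is to prove part (1) directly by a diagram-chase–style calculation, and then derive part (2) formally from (1) using functoriality of $H^\bullet_{cur}$ established in Proposition \ref{prop: curved coh is a functor}.

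For part (1), I would start by fixing a class $[\alpha] \in H^p_{cur}(\A)$ with representative $\alpha \in cur(\nabla^p)$, so that $\nabla \alpha = [R, \eta] = \nabla^2 \eta$ for some $\eta \in \A^{p-1}$, and a homotopy $h$ realizing $f - g = \Theta \o h + h \o \nabla$. The goal is to show $(f-g)(\alpha) \in im(\Theta^{p-1})$, i.e.\ represents the zero class in $H^p_{cur}(\B)$. In the uncurved setting one would simply observe that $(f-g)(\alpha) = \Theta(h\alpha) + h(\nabla\alpha) = \Theta(h\alpha)$ because $\nabla\alpha = 0$; here the obstruction is the stray term $h(\nabla^2 \eta)$, and the key step is to rewrite it as something in $im(\Theta)$.

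The trick I would use is to apply the homotopy identity itself to $\nabla \eta$, which gives
\[ h(\nabla^2 \eta) = (f - g)(\nabla \eta) - \Theta(h\nabla \eta). \]
Then, crucially invoking that $f$ and $g$ are $\NGA$-morphisms and hence commute with the differentials (this is the point where the strict notion of morphism in Definition \ref{Def cdga morphism} is essential, as foreshadowed in Remark \ref{rem:defn of nga}), I can move $\nabla$ across $f$ and $g$ to get $(f - g)(\nabla \eta) = \Theta(f\eta) - \Theta(g\eta)$. Combining these,
\[ (f - g)(\alpha) = \Theta\bigl(h\alpha + f\eta - g\eta - h\nabla\eta\bigr), \]
which lies in the image of $\Theta^{p-1}$ and hence represents the zero class. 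This shows $[f(\alpha)] = [g(\alpha)]$ in $H^p_{cur}(\B)$, so the induced maps are equal.

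For part (2), assume $f: \A \to \B$ is a homotopy equivalence with homotopy inverse $g: \B \to \A$, so that $g \o f \simeq id_\A$ and $f \o g \simeq id_\B$. By part (1), applying $H^\bullet_{cur}$ and using the functoriality from Proposition \ref{prop: curved coh is a functor} yields $H^\bullet_{cur}(g) \o H^\bullet_{cur}(f) = H^\bullet_{cur}(id_\A) = id$ and similarly on the other side, so $H^\bullet_{cur}(f)$ is an isomorphism with inverse $H^\bullet_{cur}(g)$.

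I expect the main obstacle to be, conceptually, making peace with the term $h(\nabla^2 \eta)$ that has no analogue in the uncurved case; technically this is not hard once one notices that applying the homotopy relation to $\nabla \eta$ and using commutation with $\nabla$ produces exactly the needed exact correction. No other step requires more than formal manipulation.
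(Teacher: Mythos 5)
Your proof is correct and follows essentially the same strategy as the paper: apply the homotopy relation once to $\alpha$, rewrite the stray term $h(\nabla^2\eta)$ by a second application of the homotopy relation (this time to $\nabla\eta$), and use that $f$, $g$ commute with the curved differentials to absorb the result into $im(\Theta^{p-1})$, with part (2) then following by functoriality via Proposition \ref{prop: curved coh is a functor}. Your version is in fact cleaner than the printed one: the paper's displayed calculation substitutes $h(\nabla^2\eta) = (f-g-\Theta\circ h)(\nabla^2\eta)$ rather than the correct $(f-g-\Theta\circ h)(\nabla\eta)$, which propagates to a final expression $\Theta\bigl(h\alpha + f(\nabla\eta) - g(\nabla\eta) - h(\nabla^2\eta)\bigr)$ whose summands mix degrees $p-1$ and $p$; your substitution $h(\nabla^2\eta) = (f-g)(\nabla\eta) - \Theta(h\nabla\eta)$ keeps every term inside $\Theta$ in degree $p-1$ and lands honestly in $im(\Theta^{p-1})$.
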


We will combine the material from this section to prove in Proposition \ref{prop:PMhtpyM} that, given a vector bundle with connection $(E, \nabla) \to M$, the evaluation map $ev_0: PM \to M$ induces a homotopy equivalence of curved dgas  $ev_0^*: \Omega_{\nabla}(M, \E) \xrightarrow{\simeq} \Omega_{\wt{ \nabla}} \left(PM, \E_0\right)$. Therefore, by Proposition \ref{prop:AlgChainHomotopy} above, $ev_0^*$ induces an isomorphism on curved cohomology.

\section{Zigzag Hochschild}\label{section zigzag}

In this chapter, we define the complex $ZZ(\A)$ along with a product, making it into a curved dga.  It should be noted here that the product structure is in some sense the entire motivation for the use of zigzags.  In fact, one could employ a traditional bar complex construction and use a similar iterated integral instead of the one we define if we only wanted a map that respects the differentials.  However, in this approach, the product structure given by wedging forms on the path space would not be reflected in the algebraic model (see page \pageref{intro calculation} for some more clarification on this point).  Thus, we introduce an alteration of this bar complex that uses zigzags, along with an iterated integral map, to produce a morphism in our category of curved differential graded algebras.

Before providing the official Definition \ref{def CH^ZZ}, we will expand upon and explain the individual components that comprise the definition of $ZZ(\A)$.  The monomials will be elements $\underline{x}_{k,n}  \in  (\A \otimes(\A^{\otimes n}\otimes \A)^{\otimes k})[n]$, and such an element is represented visually by the following diagram.
\begin{equation*}\label{general zigzag}
\begin{tikzpicture}
\draw[name path=zigzagtop] (0,10) to (10,9) to (0,8) to (10,7) to (0,6);
 \fill (0,5.75) circle (1pt);
 \fill (0,5.5) circle (1pt);
 \fill (0,5.25) circle (1pt);
\draw[name path=zigzagbot] (0,5) to (10,4) to (0,3); 
\fill (0,10) circle (2pt) node[left] {$x_{(0,0)}$};
\fill (0,8) circle (2pt) node[left] {$x_{(2,n+1)}$};
\fill (0,6) circle (2pt) node[left] {$x_{(4,n+1)}$};
\fill (0,5) circle (2pt) node[left] {$x_{(k-2,n+1)}$};
\fill (0,3) circle (2pt) node[left] {$x_{(k,n+1)}$};
\fill (10,9) circle (2pt) node[right] {$x_{(1,n+1)}$};
\fill (10,7) circle (2pt) node[right] {$x_{(3,n+1)}$};
\fill (10,4) circle (2pt) node[right] {$x_{(k-1,n+1)}$};
\draw[opacity=0,name path=t1] (1, 10) to (1,2.9);
\fill [name intersections={of=t1 and zigzagtop, by={a1,b1, c1, d1}}]
        (a1) circle (2pt)
        (b1) circle (2pt)
        (c1) circle (2pt)
        (d1) circle (2pt);
\node[label=above:{$x_{(1,1)}$}] at (a1) {};
\node[label=above:$x_{(2,n)}$] at (b1) {};
\node[label=below:$x_{(3,1)}$] at (c1) {};
\node[label=above:$x_{(4,n)}$] at (d1) {};

\fill [name intersections={of=t1 and zigzagbot, by={e1,f1}}]
        (e1) circle (2pt)
        (f1) circle (2pt);
\node[label=above:{$x_{(k-1,1)}$}] at (e1) {};
\node[label=above:$x_{(k,n)}$] at (f1) {};

\draw[opacity=0,name path=t2] (3, 10) to (3,2.9);
\fill [name intersections={of=t2 and zigzagtop, by={a2,b2, c2, d2}}]
        (a2) circle (2pt)
        (b2) circle (2pt)
        (c2) circle (2pt)
        (d2) circle (2pt);
\node[label=above:{$x_{(1,2)}$}] at (a2) {};
\node[label=above:$x_{(2,n-1)}$] at (b2) {};
\node[label=below:$x_{(3,2)}$] at (c2) {};
\node[label=above:$x_{(4,n-1)}$] at (d2) {};

\fill [name intersections={of=t2 and zigzagbot, by={e2,f2}}]
        (e2) circle (2pt)
        (f2) circle (2pt);
\node[label=above:{$x_{(k-1,2)}$}] at (e2) {};
\node[label=above:$x_{(k,n-1)}$] at (f2) {};



\draw[opacity=0, name path=t4] (9, 10) to (9,2.9);
\fill [name intersections={of=t4 and zigzagtop, by={a4,b4, c4, d4}}]
        (a4) circle (2pt)
        (b4) circle (2pt)
        (c4) circle (2pt)
        (d4) circle (2pt);
\node[label=above:{$x_{(1,n)}$}] at (a4) {};
\node[label=below:$x_{(2,1)}$] at (b4) {};
\node[label=above:$x_{(3,n)}$] at (c4) {};
\node[label=below:$x_{(4,1)}$] at (d4) {};

\fill [name intersections={of=t4 and zigzagbot, by={e4,f4}}]
        (e4) circle (2pt)
        (f4) circle (2pt);
\node[label=above:{$x_{(k-1,n)}$}] at (e4) {};
\node[label=below:$x_{(k,1)}$] at (f4) {};

\draw[opacity=0, name path=dotL] (5, 10) to (4.75,2.9);
\path [name intersections={of=dotL and zigzagtop, by={dotL1, dotL2, dotL3, dotL4}}];

\foreach \n in {1,...,4} {
\node[label=above: $\ldots$] at (dotL\n) {};}

\path [name intersections={of=dotL and zigzagbot, by={dotL5, dotL6}}];
\node[label=above: $\ldots$] at (dotL5) {};
\node[label=above: $\ldots$] at (dotL6) {};
\end{tikzpicture}
\end{equation*}
Here, the interval for the bar complex has been replaced with $\frac{1}{2} k$-many zigzags going back and forth over the same interval.  For this reason $k$ must be  even.  An odd-indexed trip over the interval (left-to-right) is referred to as a ``zig,'' and an even-indexed trip over the interval (right-to-left) is called a ``zag''.  The elements $x_{(i,j)} \in \A$ are placed at the points where the zigzags cross the $(n+2)$-many columns\footnote{These columns represent the ``time-slots''  $0=t_0\le t_1 \le \ldots \le t_n \le t_{n+1}=1$ over which we will eventually integrate when considering the iterated integral for differential forms.}.  For example, when $n=3$ and $k=4$ the monomial $\underline{x}_{k,n}$ would be represented by the following diagram.
\abovedisplayskip=5pt
 \belowdisplayskip=5pt
\begin{equation*} \label{simple monomial pic}
\begin{tikzpicture}
\draw[name path=zigzagtop] (0,10) to (6,9) to (0,8) to (6,7) to (0,6); 
\fill (0,10) circle (2pt) node[above] {$x_{(0,0)}$};
\fill (0,8) circle (2pt) node[above] {$x_{(2,4)}$};
\fill (0,6) circle (2pt) node[above] {$x_{(4,4)}$};
\fill (6,9) circle (2pt) node[above] {$x_{(1,4)}$};
\fill (6,7) circle (2pt) node[above] {$x_{(3,4)}$};
\draw[opacity=0,name path=t1] (1, 10) to (1,6);
\fill [name intersections={of=t1 and zigzagtop, by={a1,b1, c1, d1}}]
        (a1) circle (2pt)
        (b1) circle (2pt)
        (c1) circle (2pt)
        (d1) circle (2pt);
\node[label=above:{$x_{(1,1)}$}] at (a1) {};
\node[label=above:$x_{(2,3)}$] at (b1) {};
\node[label=below:$x_{(3,1)}$] at (c1) {};
\node[label=above:$x_{(4,3)}$] at (d1) {};

\draw[opacity=0,name path=t2] (3, 10) to (3,6);
\fill [name intersections={of=t2 and zigzagtop, by={a2,b2, c2, d2}}]
        (a2) circle (2pt)
        (b2) circle (2pt)
        (c2) circle (2pt)
        (d2) circle (2pt);
\node[label=above:{$x_{(1,2)}$}] at (a2) {};
\node[label=above:$x_{(2,2)}$] at (b2) {};
\node[label=above:$x_{(3,2)}$] at (c2) {};
\node[label=above:$x_{(4,2)}$] at (d2) {};
\draw[opacity=0, name path=t3] (5, 10) to (5,6);
\fill [name intersections={of=t3 and zigzagtop, by={a3,b3, c3, d3}}]
        (a3) circle (2pt)
        (b3) circle (2pt)
        (c3) circle (2pt)
        (d3) circle (2pt);
\node[label=above:{$x_{(1,3)}$}] at (a3) {};
\node[label=below:$x_{(2,1)}$] at (b3) {};
\node[label=above:$x_{(3,3)}$] at (c3) {};
\node[label=below:$x_{(4,1)}$] at (d3) {};
\end{tikzpicture}
\end{equation*}

The grading on a monomial $\underline{x}_{k,n}$ is defined by taking the usual grading for tensor products  (i.e. summing the degrees of each tensor factor), and then shifting negatively by the number of interior columns $n$.  This degree shift will give a factor of $(-1)^n$ on the components of the differential, which will agree with the $\pm$ sign appearing in our eventual Stokes formula \eqref{eq:BundleStokes}.  The total space of the zigzag algebra is then defined, as a graded vector space, by a quotient of
\[ \bigoplus\limits_{\substack{n, k-1 \ge 0,\\ k \text{ even}}}   (\A \otimes(\A^{\otimes n}\otimes \A)^{\otimes k})[n].\] 
We will denote the grading of $\underline{x}_{k,n}\in ZZ(\A)$ by $|\underline{x}_{k,n}| - n$, where $|\underline{x_{k,n}}| \= |x_{(0,0)}| + |x_{(1,1)}| + \ldots + |x_{(k,n+1)}|$ is the total degree of $\underline{x}_{k,n}$ before shifting.  

The differential $D_z$ is comprised of three components: $\nabla_z$, $b_z$, and $c_z$.  The first two, $\nabla_z$ and $b_z$, are analogous to the differentials in the ordinary 2-sided bar complex, while  $c_z$  involves the curvature $R$ of the curved dga.  We proceed to describe each of these in more detail \label{describe Dz} and apply them to an arbitrary monomial
 \begin{align*}
 \underline{x}_{k,n} = x_{(0,0)}&\otimes (x_{(1,1)} \otimes \ldots \otimes x_{(1,n)} \otimes x_{(1,n+1)}) \otimes\\
  \ldots &\otimes (x_{(i,1)} \otimes \ldots \otimes x_{(i,n)} \otimes x_{(i,n+1)})\otimes \\
  \ldots &\otimes (x_{(k,1)} \otimes \ldots \otimes x_{(k,n)} \otimes x_{(k,n+1)}).
 \end{align*}
\begin{enumerate}
\item The differential $\nabla_z$ is defined by extending $\nabla$ from an operator on $\A$ to a derivation on the tensor algebra of $\A$.  In other words, $\nabla_z(\underline{\omega}_{k,n})$ applies the differential coming from $\A$ to each tensor factor of $\omega_{(i,p)}$ by the Leibniz rule. This is visualized by \label{dandbpics}
\begin{equation*}\resizebox{12cm}{!}{
 {\Huge $\nabla_z$} {\begin{tikzpicture}[ baseline={([yshift=-.5ex]current bounding box.center)},vertex/.style={anchor=base,circle,fill=black!25,minimum size=18pt,inner sep=2pt}]
  \draw [thick] (0,0) to [round left paren ] (0,5);
  \end{tikzpicture}}
{ \begin{tikzpicture}[ baseline={([yshift=-.5ex]current bounding box.center)},vertex/.style={anchor=base,circle,fill=black!25,minimum size=18pt,inner sep=2pt}]
  \draw[name path=zigzag1] (0,10) to (3,9) to (0,8) to (3,7) to (0,6);
   
   \draw[opacity=0,name path=t1] (0, 10) to (0,5);
   \fill [name intersections={of=t1 and zigzag1, by={a, i,q}}]
        (a) circle (2pt)
        (i) circle (2pt)
        (q) circle (2pt);
\node[label=left:{$a$}] at (a) {};
\node[label=left:{$i$}] at (i) {};
\node[label=left:{$q$}] at (q) {};

\draw[opacity=0,name path=t2] (0.5, 10) to (0.5,6);
   \fill [name intersections={of=t2 and zigzag1, by={b,h, j, p}}]
        \foreach \x in {b, h, p}
        {(\x) circle (2pt)}
        (j) circle (2pt); 
    \foreach \x in {b, h, p}
        {\node[above=2pt] at (\x) {$\x$};}
        \node[below] at (j) {$j$};

\draw[opacity=0,name path=t3] (1.5, 10) to (1.5,6);
   \fill [name intersections={of=t3 and zigzag1, by={c,g, k, o}}]
        \foreach \x in {c,g, k, o}
        {(\x) circle (2pt)};
        \foreach \x in {c,g, k, o}
        {\node[label=above:{$\x$}] at (\x) {};}
        
\draw[opacity=0,name path=t4] (2, 10) to (2,6);
   \fill [name intersections={of=t4 and zigzag1, by={d,f, l, n}}]
        \foreach \x in {d,f, l, n}
        {(\x) circle (2pt)};
        \foreach \x in {d,f, l, n}
        {\node[label=above:{$\x$}] at (\x) {};}
        
\draw[opacity=0,name path=t5] (3, 10) to (3,6);
   \fill [name intersections={of=t5 and zigzag1, by={e, m}}]
        \foreach \x in {e,m}
        {(\x) circle (2pt)};
        \foreach \x in {e,m}
        {\node[label=above:{$\x$}] at (\x) {};}
\end{tikzpicture}}
{\begin{tikzpicture}[ baseline={([yshift=-.5ex]current bounding box.center)},vertex/.style={anchor=base,circle,fill=black!25,minimum size=18pt,inner sep=2pt}]
  \draw [thick] (0,0) to [round right paren ] (0,5);
  \end{tikzpicture}} = 
   { \begin{tikzpicture}[ baseline={([yshift=-.5ex]current bounding box.center)},vertex/.style={anchor=base,circle,fill=black!25,minimum size=18pt,inner sep=2pt}]
  \draw[name path=zigzag1] (0,10) to (3,9) to (0,8) to (3,7) to (0,6);
   
   \draw[opacity=0,name path=t1] (0, 10) to (0,5);
   \fill [name intersections={of=t1 and zigzag1, by={a, i,q}}]
        (a) circle (2pt)
        (i) circle (2pt)
        (q) circle (2pt);
\node[label=left:{$\nabla(a)$}] at (a) {};
\node[label=left:{$i$}] at (i) {};
\node[label=left:{$q$}] at (q) {};

\draw[opacity=0,name path=t2] (0.5, 10) to (0.5,6);
   \fill [name intersections={of=t2 and zigzag1, by={b,h, j, p}}]
        \foreach \x in {b, h, p}
        {(\x) circle (2pt)}
        (j) circle (2pt); 
    \foreach \x in {b, h, p}
        {\node[above=2pt] at (\x) {$\x$};}
        \node[below] at (j) {$j$};

\draw[opacity=0,name path=t3] (1.5, 10) to (1.5,6);
   \fill [name intersections={of=t3 and zigzag1, by={c,g, k, o}}]
        \foreach \x in {c,g, k, o}
        {(\x) circle (2pt)};
        \foreach \x in {c,g, k, o}
        {\node[label=above:{$\x$}] at (\x) {};}
        
\draw[opacity=0,name path=t4] (2, 10) to (2,6);
   \fill [name intersections={of=t4 and zigzag1, by={d,f, l, n}}]
        \foreach \x in {d,f, l, n}
        {(\x) circle (2pt)};
        \foreach \x in {d,f, l, n}
        {\node[label=above:{$\x$}] at (\x) {};}
        
\draw[opacity=0,name path=t5] (3, 10) to (3,6);
   \fill [name intersections={of=t5 and zigzag1, by={e, m}}]
        \foreach \x in {e,m}
        {(\x) circle (2pt)};
        \foreach \x in {e,m}
        {\node[label=above:{$\x$}] at (\x) {};}
\end{tikzpicture}}
$\pm$  { \begin{tikzpicture}[ baseline={([yshift=-.5ex]current bounding box.center)},vertex/.style={anchor=base,circle,fill=black!25,minimum size=18pt,inner sep=2pt}]
  \draw[name path=zigzag1] (0,10) to (3,9) to (0,8) to (3,7) to (0,6);
   
   \draw[opacity=0,name path=t1] (0, 10) to (0,5);
   \fill [name intersections={of=t1 and zigzag1, by={a, i,q}}]
        (a) circle (2pt)
        (i) circle (2pt)
        (q) circle (2pt);
\node[label=left:{$a$}] at (a) {};
\node[label=left:{$i$}] at (i) {};
\node[label=left:{$q$}] at (q) {};

\draw[opacity=0,name path=t2] (0.5, 10) to (0.5,6);
   \fill [name intersections={of=t2 and zigzag1, by={b,h, j, p}}]
        \foreach \x in {b, h, p}
        {(\x) circle (2pt)}
        (j) circle (2pt); 
    \foreach \x in {h, p}
        {\node[above=2pt] at (\x) {$\x$};}
        \node[above=2pt] at (b) {$\nabla(b)$};
        \node[below] at (j) {$j$};

\draw[opacity=0,name path=t3] (1.5, 10) to (1.5,6);
   \fill [name intersections={of=t3 and zigzag1, by={c,g, k, o}}]
        \foreach \x in {c,g, k, o}
        {(\x) circle (2pt)};
        \foreach \x in {c,g, k, o}
        {\node[label=above:{$\x$}] at (\x) {};}
        
\draw[opacity=0,name path=t4] (2, 10) to (2,6);
   \fill [name intersections={of=t4 and zigzag1, by={d,f, l, n}}]
        \foreach \x in {d,f, l, n}
        {(\x) circle (2pt)};
        \foreach \x in {d,f, l, n}
        {\node[label=above:{$\x$}] at (\x) {};}
        
\draw[opacity=0,name path=t5] (3, 10) to (3,6);
   \fill [name intersections={of=t5 and zigzag1, by={e, m}}]
        \foreach \x in {e,m}
        {(\x) circle (2pt)};
        \foreach \x in {e,m}
        {\node[label=above:{$\x$}] at (\x) {};}
\end{tikzpicture}} $\pm \ldots \pm$ 
 { \begin{tikzpicture}[ baseline={([yshift=-.5ex]current bounding box.center)},vertex/.style={anchor=base,circle,fill=black!25,minimum size=18pt,inner sep=2pt}]
  \draw[name path=zigzag1] (0,10) to (3,9) to (0,8) to (3,7) to (0,6);
   
   \draw[opacity=0,name path=t1] (0, 10) to (0,5);
   \fill [name intersections={of=t1 and zigzag1, by={a, i,q}}]
        (a) circle (2pt)
        (i) circle (2pt)
        (q) circle (2pt);
\node[label=left:{$a$}] at (a) {};
\node[label=left:{$i$}] at (i) {};
\node[label=left:{$\nabla(q)$}] at (q) {};

\draw[opacity=0,name path=t2] (0.5, 10) to (0.5,6);
   \fill [name intersections={of=t2 and zigzag1, by={b,h, j, p}}]
        \foreach \x in {b, h, p}
        {(\x) circle (2pt)}
        (j) circle (2pt); 
    \foreach \x in {b, h, p}
        {\node[above=2pt] at (\x) {$\x$};}
        \node[below] at (j) {$j$};

\draw[opacity=0,name path=t3] (1.5, 10) to (1.5,6);
   \fill [name intersections={of=t3 and zigzag1, by={c,g, k, o}}]
        \foreach \x in {c,g, k, o}
        {(\x) circle (2pt)};
        \foreach \x in {c,g, k, o}
        {\node[label=above:{$\x$}] at (\x) {};}
        
\draw[opacity=0,name path=t4] (2, 10) to (2,6);
   \fill [name intersections={of=t4 and zigzag1, by={d,f, l, n}}]
        \foreach \x in {d,f, l, n}
        {(\x) circle (2pt)};
        \foreach \x in {d,f, l, n}
        {\node[label=above:{$\x$}] at (\x) {};}
        
\draw[opacity=0,name path=t5] (3, 10) to (3,6);
   \fill [name intersections={of=t5 and zigzag1, by={e, m}}]
        \foreach \x in {e,m}
        {(\x) circle (2pt)};
        \foreach \x in {e,m}
        {\node[label=above:{$\x$}] at (\x) {};}
\end{tikzpicture}}}
  \end{equation*}
and defined explicitly as
\begin{align}
\nabla_z( \underline{x}_{k,n})\= &(-1)^n \nabla(x_{(0,0)}) \otimes (x_{(1,1)} \otimes \ldots \otimes x_{(1,n+1)})\otimes \ldots \otimes (x_{(k,1)}  \otimes \ldots \otimes x_{(k,n+1)}) \label{EQ: nabla_z}\\
+ & \sum\limits_{p=1}^{n+1} \sum\limits_{i=1}^k (-1)^{n+ \beta_{(i,p)}}  x_{(0,0)} \otimes \ldots \otimes (x_{(i,1)} \otimes \ldots \otimes \nabla(x_{i,p}) \otimes \ldots \otimes x_{(i,n+1)})\nonumber \\
&\otimes \ldots \otimes (x_{(k,1)}  \otimes \ldots \otimes x_{(k,n+1)}),\nonumber
\end{align}
where $\beta_{(i,p)}$ is equal to the sum of the degrees of the elements in $\A$ appearing before the element that $\nabla$ is being applied to.  In particular $\beta_{(i,p)}\= |x_{(0,0)}| + \ldots + |x_{(i,1)}| + \ldots +  |x_{(i,p-1)}|$ for $p>1$ and $\beta_{(i,1)}\= |x_{(0,0)}| + \ldots + |x_{(i-1,1)}| + \ldots |x_{(i-1,n+1)}|$.

\item The $b_z$ is the ``Hochschild-esque'' component of the differential and is motivated by the geometric boundary of an $n$-simplex.  Applying $b_z$ to a monomial is represented visually by identifying two neighboring columns and multiplying their entries.
   \begin{equation*}\resizebox{12cm}{!}{
 $b_z${\begin{tikzpicture}[ baseline={([yshift=-.5ex]current bounding box.center)},vertex/.style={anchor=base,circle,fill=black!25,minimum size=18pt,inner sep=2pt}]
  \draw [thick] (0,0) to [round left paren ] (0,5);
  \end{tikzpicture}}
{ \begin{tikzpicture}[ baseline={([yshift=-.5ex]current bounding box.center)},vertex/.style={anchor=base,circle,fill=black!25,minimum size=18pt,inner sep=2pt}]
  \draw[name path=zigzag1] (0,10) to (3,9) to (0,8) to (3,7) to (0,6);
   
   \draw[opacity=0,name path=t1] (0, 10) to (0,5);
   \fill [name intersections={of=t1 and zigzag1, by={a, i,q}}]
        (a) circle (2pt)
        (i) circle (2pt)
        (q) circle (2pt);
\node[label=left:{$a$}] at (a) {};
\node[label=left:{$i$}] at (i) {};
\node[label=left:{$q$}] at (q) {};

\draw[opacity=0,name path=t2] (0.5, 10) to (0.5,6);
   \fill [name intersections={of=t2 and zigzag1, by={b,h, j, p}}]
        \foreach \x in {b, h, p}
        {(\x) circle (2pt)}
        (j) circle (2pt); 
    \foreach \x in {b, h, p}
        {\node[above=2pt] at (\x) {$\x$};}
        \node[below] at (j) {$j$};

\draw[opacity=0,name path=t3] (1.5, 10) to (1.5,6);
   \fill [name intersections={of=t3 and zigzag1, by={c,g, k, o}}]
        \foreach \x in {c,g, k, o}
        {(\x) circle (2pt)};
        \foreach \x in {c,g, k, o}
        {\node[label=above:{$\x$}] at (\x) {};}
        
\draw[opacity=0,name path=t4] (2, 10) to (2,6);
   \fill [name intersections={of=t4 and zigzag1, by={d,f, l, n}}]
        \foreach \x in {d,f, l, n}
        {(\x) circle (2pt)};
        \foreach \x in {d,f, l, n}
        {\node[label=above:{$\x$}] at (\x) {};}
        
\draw[opacity=0,name path=t5] (3, 10) to (3,6);
   \fill [name intersections={of=t5 and zigzag1, by={e,  m}}]
        \foreach \x in {e,m}
        {(\x) circle (2pt)};
        \foreach \x in {e,m}
        {\node[label=above:{$\x$}] at (\x) {};}
\end{tikzpicture}}
{\begin{tikzpicture}[ baseline={([yshift=-.5ex]current bounding box.center)},vertex/.style={anchor=base,circle,fill=black!25,minimum size=18pt,inner sep=2pt}]
  \draw [thick] (0,0) to [round right paren ] (0,5);
  \end{tikzpicture}} = 
   { \begin{tikzpicture}[ baseline={([yshift=-.5ex]current bounding box.center)},vertex/.style={anchor=base,circle,fill=black!25,minimum size=18pt,inner sep=2pt}]
  \draw[name path=zigzag1] (0,10) to (3,9) to (0,8) to (3,7) to (0,6);
   
   \draw[opacity=0,name path=t1] (0, 10) to (0,5);
   \fill [name intersections={of=t1 and zigzag1, by={a,  i,q}}]
        (a) circle (2pt)
        (i) circle (2pt)
        (q) circle (2pt);
\node[label=left:{$ab$}] at (a) {};
\node[label=left:{$hij$}] at (i) {};
\node[label=left:{$pq$}] at (q) {};


\draw[opacity=0,name path=t3] (1.5, 10) to (1.5,6);
   \fill [name intersections={of=t3 and zigzag1, by={c,g, k, o}}]
        \foreach \x in {c,g, k, o}
        {(\x) circle (2pt)};
        \foreach \x in {c,g, k, o}
        {\node[label=above:{$\x$}] at (\x) {};}
        
\draw[opacity=0,name path=t4] (2, 10) to (2,6);
   \fill [name intersections={of=t4 and zigzag1, by={d,f, l, n}}]
        \foreach \x in {d,f, l, n}
        {(\x) circle (2pt)};
        \foreach \x in {d,f, l, n}
        {\node[label=above:{$\x$}] at (\x) {};}
        
\draw[opacity=0,name path=t5] (3, 10) to (3,6);
   \fill [name intersections={of=t5 and zigzag1, by={e,  m}}]
        \foreach \x in {e,m}
        {(\x) circle (2pt)};
        \foreach \x in {e,m}
        {\node[label=above:{$\x$}] at (\x) {};}
\end{tikzpicture}}
 $\pm$  { \begin{tikzpicture}[ baseline={([yshift=-.5ex]current bounding box.center)},vertex/.style={anchor=base,circle,fill=black!25,minimum size=18pt,inner sep=2pt}]
  \draw[name path=zigzag1] (0,10) to (3,9) to (0,8) to (3,7) to (0,6);
   
   \draw[opacity=0,name path=t1] (0, 10) to (0,5);
   \fill [name intersections={of=t1 and zigzag1, by={a,  i,q}}]
        (a) circle (2pt)
        (i) circle (2pt)
        (q) circle (2pt);
\node[label=left:{$a$}] at (a) {};
\node[label=left:{$i$}] at (i) {};
\node[label=left:{$q$}] at (q) {};

\draw[opacity=0,name path=t2] (0.5, 10) to (0.5,6);
   \fill [name intersections={of=t2 and zigzag1, by={b,h, j, p}}]
        \foreach \x in {b, h, p}
        {(\x) circle (2pt)}
        (j) circle (2pt); 
      \node[above=2pt] at (b) {$bc$};
      \node[above=2pt] at (h) {$gh$};
      \node[above=2pt] at (p) {$op$};
        \node[below] at (j) {$jk$};

        
\draw[opacity=0,name path=t4] (2, 10) to (2,6);
   \fill [name intersections={of=t4 and zigzag1, by={d,f, l, n}}]
        \foreach \x in {d,f, l, n}
        {(\x) circle (2pt)};
        \foreach \x in {d,f, l, n}
        {\node[label=above:{$\x$}] at (\x) {};}
        
\draw[opacity=0,name path=t5] (3, 10) to (3,6);
   \fill [name intersections={of=t5 and zigzag1, by={e,  m}}]
        \foreach \x in {e,m}
        {(\x) circle (2pt)};
        \foreach \x in {e,m}
        {\node[label=above:{$\x$}] at (\x) {};}
\end{tikzpicture}} $\pm \ldots \pm$ 
 { \begin{tikzpicture}[ baseline={([yshift=-.5ex]current bounding box.center)},vertex/.style={anchor=base,circle,fill=black!25,minimum size=18pt,inner sep=2pt}]
  \draw[name path=zigzag1] (0,10) to (3,9) to (0,8) to (3,7) to (0,6);
   
   \draw[opacity=0,name path=t1] (0, 10) to (0,5);
   \fill [name intersections={of=t1 and zigzag1, by={a,  i,q}}]
        (a) circle (2pt)
        (i) circle (2pt)
        (q) circle (2pt);
\node[label=left:{$a$}] at (a) {};
\node[label=left:{$i$}] at (i) {};
\node[label=left:{$q$}] at (q) {};

\draw[opacity=0,name path=t2] (0.5, 10) to (0.5,6);
   \fill [name intersections={of=t2 and zigzag1, by={b,h, j, p}}]
        \foreach \x in {b, h, p}
        {(\x) circle (2pt)}
        (j) circle (2pt); 
    \foreach \x in {b, h, p}
        {\node[above=2pt] at (\x) {$\x$};}
        \node[below] at (j) {$j$};

\draw[opacity=0,name path=t3] (1.5, 10) to (1.5,6);
   \fill [name intersections={of=t3 and zigzag1, by={c,g, k, o}}]
        \foreach \x in {c,g, k, o}
        {(\x) circle (2pt)};
        \foreach \x in {c,g, k, o}
        {\node[label=above:{$\x$}] at (\x) {};}
        
        
\draw[opacity=0,name path=t5] (3, 10) to (3,6);
   \fill [name intersections={of=t5 and zigzag1, by={e,  m}}]
        \foreach \x in {e,m}
        {(\x) circle (2pt)};      
        \node[label=above:{$def$}] at (e) {};
        \node[label=above:{$lmn$}] at (m) {};
\end{tikzpicture}}}
  \end{equation*}
The differential $b_z = \sum\limits_{0\le \ell  \le n} b_{\ell}$, and we define each of these smaller components $b_{\ell}$ by the following three cases.  For $0 < \ell < n$, in the middle of the zigzags, we have
\begin{align}
b_{\ell}( \underline{x}_{k,n})\=  (-1)^{n+ \ell} &x_{(0,0)} \otimes \left( x_{(1,1)} \otimes \ldots \otimes  x_{(1, \ell)} \cdot x_{(1, \ell + 1)} \otimes \ldots \otimes  x_{(1,n+1)} \right) \label{EQ b_z}\\
  & \otimes \ldots \otimes \left( x_{(2j+1,1)}\otimes \ldots \otimes  x_{(2j+1, \ell)} \cdot x_{(2j+1, \ell + 1)} \otimes \ldots \otimes  x_{(2j+1,n+1)} \right) \nonumber \\
 & \otimes \ldots \otimes \left( x_{(2i,1)} \otimes \ldots \otimes x_{(2i,n- \ell)} \cdot x_{(2i, n- \ell+1)} \otimes \ldots  \otimes x_{(2i, n+1)} \right) \nonumber \\ 
 & \otimes  \ldots \otimes  \left(x_{(k,1)} \otimes \ldots \otimes x_{(k,n-\ell)} \cdot x_{(k, n-\ell+1)} \otimes \ldots  \otimes  x_{(k,n+1)} \right).\nonumber
\end{align}
For $\ell = 0$, at the left end-point, we have
\begin{align*}
b_0( \underline{x}_{k,n})\=  (-1)^n &(x_{(0,0)} \cdot x_{(1,1)}) \otimes \left( x_{(1,2)} \otimes \ldots \otimes x_{(1,n)} \otimes x_{(1,n+1)} \right)\\
 & \otimes \ldots \otimes \left( x_{(2i,1)} \otimes \ldots \otimes x_{(2i,n-1)} \otimes (x_{(2i,n)} \cdot x_{(2i, n+1)} \cdot x_{(2i+1, 1)}) \right)\\ 
  & \otimes \ldots \otimes \left( x_{(2j+1,2)} \otimes \ldots \otimes x_{(2j+1,n+1)} \right)\\ 
 & \otimes  \ldots \otimes  \left(x_{(k,1)} \otimes \ldots \otimes x_{(k,n-1)} \otimes x_{(k, n)} \cdot x_{(k,n+1)} \right).
\end{align*}
For the $\ell = n$, at the right end-point, we have
\begin{align*}
b_n( \underline{x}_{k,n})\= &x_{(0,0)} \otimes \left( x_{(1,1)} \otimes \ldots \otimes x_{(1,n-1)} \otimes x_{(1,n)} \cdot x_{(1,n+1)} \cdot_{(2,1)} \right)\\
  & \otimes \ldots \otimes \left( x_{(2j-1,1)} \otimes \ldots \otimes  x_{(2j-1,n)} \cdot x_{(2j-1,n+1)} \cdot_{(2j,1)} \right)\\ 
 & \otimes \ldots \otimes \left( x_{(2i,2)} \otimes \ldots \otimes x_{(2i,n+1)} \right)\\ 
 & \otimes  \ldots \otimes  \left(x_{(k,2)} \otimes \ldots \otimes x_{(k,n+1)} \right).
\end{align*}
Note that $b_z$ is a degree +1 operator, because while the sum of the degrees of tensor factors in $\underline{x}_{k,n}$ and $b_{\ell}(\underline{x}_{k,n})$ are equal,  $\underline{x}_{k,n}$ is shifted by $-n$ whereas $b_{\ell}(\underline{x}_{k,n})$ is shifted by $-(n-1)$. 

\item The $c_z$ inserts the curvature $R$ in between the different tensor factors of $\underline{x}_{k,n}$.  This is accomplished with components $c_{j, \ell}$ which insert a new column in $\underline{x}_{k,n}$ between columns $(\ell-1)$ and $\ell$, consisting of 1's except for exactly one $R$ at the $j$-th zig/zag (or ``row''):
\begin{equation*}
\begin{tikzpicture}
\draw[name path=zigzagtop, gray] (0,10) to (6,9) to (0,8) to (6,7) to (0,6); 
\fill[gray] (0,10) circle (2pt) ;
\fill[gray] (0,8) circle (2pt) ;
\fill[gray] (0,6) circle (2pt) ;
\fill[gray] (6,9) circle (2pt) ;
\fill[gray] (6,7) circle (2pt) ;
\draw[gray, dotted,name path=t1] (1, 10) to (1,6);
\fill [gray, name intersections={of=t1 and zigzagtop, by={a1,b1, c1, d1}}]
        (a1) circle (2pt)
        (b1) circle (2pt)
        (c1) circle (2pt)
        (d1) circle (2pt);

\draw[dotted, name path=t2] (3, 10) to (3,6);
\fill [name intersections={of=t2 and zigzagtop, by={a2,b2, c2, d2}}]
        (a2) circle (2pt)
        (b2) circle (2pt)
        (c2) circle (2pt)
        (d2) circle (2pt);
\node[label=above left:{$1$}] at (a2) {};
\node[label=above left:$R$] at (b2) {};
\node[label=above left:$1$] at (c2) {};
\node[label=above left:$1$] at (d2) {};



\draw[dotted, gray, name path=t3] (5, 10) to (5,6);
\fill [gray, name intersections={of=t3 and zigzagtop, by={a3,b3, c3, d3}}]
        (a3) circle (2pt)
        (b3) circle (2pt)
        (c3) circle (2pt)
        (d3) circle (2pt);

\end{tikzpicture}
\end{equation*} In particular, we have
\begin{align}
c_{z}(\underline{x}_{k,n}) &= \sum\limits_{j=1}^k \sum\limits_{\ell=1}^{n+1} c_{j,\ell} \left(x_{(0,0)} \otimes \left(\bigotimes\limits_{r=1}^k \bigotimes\limits_{p=1}^{n+1} x_{(r,p)} \right)  \right)  \label{EQ c_z} \\
&=  \sum\limits_{j, \ell} (-1)^{n + j + \epsilon(\underline{x},j,\ell)} x_{(0,0)} \otimes  \left(\bigotimes\limits_{r=1}^k  \left( \bigotimes\limits_{p=1}^{p'_{j,\ell}-1} x_{(r,p)} \otimes (1 - (1-R) \cdot \delta_{r,j}) \otimes \bigotimes\limits_{p=p'_{j,\ell}}^{n+1} x_{(r,p)}  \right)  \right).  \nonumber
\end{align}
Here, $(1 - (1-R) \cdot \delta_{r,j})$ uses the Kronecker delta function to insert a curvature term if we are on zigzag row $j$ and a unit otherwise, $\epsilon(\underline{x},j,\ell)$  equals the sum of the degrees of the monomials in $\underline{x}_{k,n}$ which the curvature term had to commute past, and $p'_{j, \ell}:= \frac{1}{2}(n + (-1)^j \cdot n) + (-1)^{j+1}\cdot \ell$ is the quantity that equals $\ell$ when $j$ is odd and equals $(n- \ell)$ when $j$ is even. We may sometimes write $c_{\bullet, \ell}$ or $c_{j, \bullet}$ when we are fixing one index and summing over the others.
\end{enumerate}

Lastly, before we can define our zigzag algebra, we will be modding out our zigzags by two relations:

\begin{enumerate}[(a)]\label{page zigzag relations}
\item Firstly, by a sub-space $Ins_{zz}$, which is the sum over all operations $ins_j$, each of which is inserting a single zig-and-zag of units at any (even) height $j$. For example, we would have
   \begin{equation*}\resizebox{8cm}{!}{
{ \begin{tikzpicture}[ baseline={([yshift=-.5ex]current bounding box.center)},vertex/.style={anchor=base,circle,fill=black!25,minimum size=18pt,inner sep=2pt}]
  \draw[name path=zigzag1] (0,10) to (3,9) to (0,8) to (3,7) to (0,6);
   
   \draw[opacity=0,name path=t1] (0, 10) to (0,5);
   \fill [name intersections={of=t1 and zigzag1, by={a,  i,q}}]
        (a) circle (2pt)
        (i) circle (2pt)
        (q) circle (2pt);
\node[label=left:{$a$}] at (a) {};
\node[label=left:{$i$}] at (i) {};
\node[label=left:{$q$}] at (q) {};

\draw[opacity=0,name path=t2] (0.5, 10) to (0.5,6);
   \fill [name intersections={of=t2 and zigzag1, by={b,h, j, p}}]
        \foreach \x in {b, h, p}
        {(\x) circle (2pt)}
        (j) circle (2pt); 
    \foreach \x in {b, h, p}
        {\node[above=2pt] at (\x) {$\x$};}
        \node[below] at (j) {$j$};

\draw[opacity=0,name path=t3] (1.5, 10) to (1.5,6);
   \fill [name intersections={of=t3 and zigzag1, by={c,g, k, o}}]
        \foreach \x in {c,g, k, o}
        {(\x) circle (2pt)};
        \foreach \x in {c,g, k, o}
        {\node[label=above:{$\x$}] at (\x) {};}
        
\draw[opacity=0,name path=t4] (2, 10) to (2,6);
   \fill [name intersections={of=t4 and zigzag1, by={d,f, l, n}}]
        \foreach \x in {d,f, l, n}
        {(\x) circle (2pt)};
        \foreach \x in {d,f, l, n}
        {\node[label=above:{$\x$}] at (\x) {};}
        
\draw[opacity=0,name path=t5] (3, 10) to (3,6);
   \fill [name intersections={of=t5 and zigzag1, by={e,  m}}]
        \foreach \x in {e,m}
        {(\x) circle (2pt)};
        \foreach \x in {e,m}
        {\node[label=above:{$\x$}] at (\x) {};}
\end{tikzpicture}}
$\sim_a$ 
   { \begin{tikzpicture}[ baseline={([yshift=-.5ex]current bounding box.center)},vertex/.style={anchor=base,circle,fill=black!25,minimum size=18pt,inner sep=2pt}]
  \draw[name path=zigzag1] (0,10) to (3,9) to (0,8) to (3,7) to (0,6) to (3,5) to (0,4);

   \draw[opacity=0,name path=t1] (0, 10) to (0,3);
   \fill [name intersections={of=t1 and zigzag1, by={a,  insL,  i,q}}]
        (a) circle (2pt)
        (insL) circle (2pt)
        (i) circle (2pt)
        (q) circle (2pt);
\node[label=left:{$a$}] at (a) {};
\node[label=left:{$1$}] at (insL) {};
\node[label=left:{$i$}] at (i) {};
\node[label=left:{$q$}] at (q) {};

\draw[opacity=0,name path=t2] (1, 10) to (1,3);
   \fill [name intersections={of=t2 and zigzag1, by={b, h, ins1t, ins1b, j, p}}]
        \foreach \x in {b, h, ins1t, ins1b, j, p}
        {(\x) circle (2pt)};
        \foreach \x in {b, h, p}
        {\node[label=above:{$\x$}] at (\x) {};}
        \node[below] at (j) {$j$};
        \foreach \x in { ins1t, ins1b}
        {\node[label=above:{$1$}] at (\x) {};}

\draw[opacity=0,name path=t3] (1.5, 10) to (1.5,3);
   \fill [name intersections={of=t3 and zigzag1, by={c,g, ins2t, ins2b, k, o}}]
        \foreach \x in {c,g, ins2t, ins2b, k, o}
        {(\x) circle (2pt)};
        \foreach \x in {c,g, k, o}
        {\node[label=above:{$\x$}] at (\x) {};}
         \foreach \x in { ins2t, ins2b}
        {\node[label=above:{$1$}] at (\x) {};}    
        
\draw[opacity=0,name path=t4] (2, 10) to (2,3);
   \fill [name intersections={of=t4 and zigzag1, by={d,f,ins3t, ins3b, l, n}}]
        \foreach \x in {d,f,ins3t, ins3b, l, n}
        {(\x) circle (2pt)};
        \foreach \x in {d,f, l, n}
        {\node[label=above:{$\x$}] at (\x) {};}
\foreach \x in { ins3t, ins3b}
        {\node[label=above:{$1$}] at (\x) {};} 
        
\draw[opacity=0,name path=t5] (3, 10) to (3,3);
   \fill [name intersections={of=t5 and zigzag1, by={e,  insR,   m}}]
        \foreach \x in {e,insR,m}
        {(\x) circle (2pt)};
        \foreach \x in {e,m}
        {\node[label=above:{$\x$}] at (\x) {};}
      \node[label=above:{$1$}] at (insR) {};

\end{tikzpicture}}}
  \end{equation*}
\item Secondly, elements can commute across units, moving to an adjacent zig or zag, but remaining in the same column.

   \begin{equation*}\resizebox{8cm}{!}{
 \begin{tikzpicture}[ baseline={([yshift=-.5ex]current bounding box.center)},vertex/.style={anchor=base,circle,fill=black!25,minimum size=18pt,inner sep=2pt}]
  \draw[name path=zigzag1] (0,10) to (3,9) to (0,8) to (3,7) to (0,6);
   
   \draw[opacity=0,name path=t1] (0, 10) to (0,5);
   \fill [name intersections={of=t1 and zigzag1, by={a,  i,n}}]
        (a) circle (2pt)
        (i) circle (2pt)
        (n) circle (2pt);
\node[label=left:{$a$}] at (a) {};
\node[label=left:{$i$}] at (i) {};
\node[label=left:{$n$}] at (n) {};

\draw[opacity=0,name path=t2] (0.5, 10) to (0.5,6);
   \fill [name intersections={of=t2 and zigzag1, by={b,h, j, m}}]
        \foreach \x in {b, h, j, m}
        {(\x) circle (2pt)}
        (j) circle (2pt); 
    \foreach \x in {b, h}
        {\node[above=2pt] at (\x) {$\x$};}
        \node[below] at (j) {$j$};
           \node[below] at (m) {$m$};

\draw[opacity=0,name path=t3] (1.5, 10) to (1.5,6);
   \fill [name intersections={of=t3 and zigzag1, by={c,g, k, l}}]
        \foreach \x in {c,g, k, l}
        {(\x) circle (2pt)};
        \foreach \x in {c,g}
        {\node[label=above:{$\x$}] at (\x) {};}
                \node[below] at (k) {$k$};
                \node[below] at (l) {$\ell$};

\draw[opacity=0,name path=t4] (2, 10) to (2,6);
   \fill [name intersections={of=t4 and zigzag1, by={d,f, l, m}}]
        \foreach \x in {d,f, l, m}
        {(\x) circle (2pt)};
        \foreach \x in {d,f}
        {\node[label=above:{$\x$}] at (\x) {};}
                \node[below] at (l) {$1$};
                \node[below] at (m) {$1$};
                        
\draw[opacity=0,name path=t5] (3, 10) to (3,6);
   \fill [name intersections={of=t5 and zigzag1, by={e,  m}}]
        \foreach \x in {e,m}
        {(\x) circle (2pt)};
        \foreach \x in {e}
        {\node[label=above:{$\x$}] at (\x) {};}
                        \node[below] at (m) {$1$};

\end{tikzpicture} $\sim_b$
 \begin{tikzpicture}[ baseline={([yshift=-.5ex]current bounding box.center)},vertex/.style={anchor=base,circle,fill=black!25,minimum size=18pt,inner sep=2pt}]
  \draw[name path=zigzag1] (0,10) to (3,9) to (0,8) to (3,7) to (0,6);
   
   \draw[opacity=0,name path=t1] (0, 10) to (0,5);
   \fill [name intersections={of=t1 and zigzag1, by={a,  i,n}}]
        (a) circle (2pt)
        (i) circle (2pt)
        (n) circle (2pt);
\node[label=left:{$a$}] at (a) {};
\node[label=left:{$i$}] at (i) {};
\node[label=left:{$n$}] at (n) {};

\draw[opacity=0,name path=t2] (0.5, 10) to (0.5,6);
   \fill [name intersections={of=t2 and zigzag1, by={b,h, j, m}}]
        \foreach \x in {b, h, j, m}
        {(\x) circle (2pt)}
        (j) circle (2pt); 
    \foreach \x in {b, h}
        {\node[above=2pt] at (\x) {$\x$};}
        \node[below] at (j) {$j$};
           \node[below] at (m) {$m$};

\draw[opacity=0,name path=t3] (1.5, 10) to (1.5,6);
   \fill [name intersections={of=t3 and zigzag1, by={c,g, k, l}}]
        \foreach \x in {c,g, k, l}
        {(\x) circle (2pt)};
        \foreach \x in {c,g}
        {\node[label=above:{$\x$}] at (\x) {};}
                \node[above] at (k) {$k \cdot \ell$};
                \node[below] at (l) {$1$};

\draw[opacity=0,name path=t4] (2, 10) to (2,6);
   \fill [name intersections={of=t4 and zigzag1, by={d,f, l, m}}]
        \foreach \x in {d,f, l, m}
        {(\x) circle (2pt)};
        \foreach \x in {d,f}
        {\node[label=above:{$\x$}] at (\x) {};}
                \node[below] at (l) {$1$};
                \node[below] at (m) {$1$};
                        
\draw[opacity=0,name path=t5] (3, 10) to (3,6);
   \fill [name intersections={of=t5 and zigzag1, by={e,  m}}]
        \foreach \x in {e,m}
        {(\x) circle (2pt)};
        \foreach \x in {e}
        {\node[label=above:{$\x$}] at (\x) {};}
                        \node[below] at (m) {$1$};

\end{tikzpicture}}
  \end{equation*}

\end{enumerate}

It is straightforward to see, albeit tedious to check, that the components of our differentials described above are well defined with respect to the relation generated by both\footnote{The relation $\sim_b$ is required to have $c_z$ be well defined with respect to the relation $\sim_a$.} $\sim_a$ and $\sim_b$. In the definition below, we will describe the minimal relation generated by these two as $\sim_{a,b}$.\label{Page Equiv a,b}

\begin{definition}\label{def CH^ZZ}
Let $(\A, \nabla, R)$ be a curved differential graded algebra. We define the \emph{\underline{zigzag algebra}}
 \[ZZ(\A) := \bigoplus\limits_{\substack{n, k-1 \ge 0,\\ k \text{ even}}}   (\A \otimes(\A^{\otimes n}\otimes \A)^{\otimes k})[n] / \sim_{a,b},\]
  where $[n]$ denotes a total shift down by $n$ and $\sim_{a,b}$ is described above this definition. A monomial $\underline{x}_{k,n} \in {ZZ}(\A)_{k,n} \subset {ZZ}(\A)$ will be written as
\abovedisplayskip=5pt
 \belowdisplayskip=5pt
 \begin{align*}
 \underline{x}_{k,n} = x_{(0,0)}&\otimes (x_{(1,1)} \otimes \ldots \otimes x_{(1,n)} \otimes x_{(1,n+1)}) \otimes\\
  \ldots &\otimes (x_{(i,1)} \otimes \ldots \otimes x_{(i,n)} \otimes x_{(i,n+1)})\otimes \\
  \ldots &\otimes (x_{(k,1)} \otimes \ldots \otimes x_{(k,n)} \otimes x_{(k,n+1)}).
 \end{align*}

The differential $D_z: ZZ(\A)  \to ZZ(\A) $ is given by $D_z( \underline{x}_{k,n})\= (\nabla_z + b_z+ c_z)(  \underline{x}_{k,n})$ as described above (see page \pageref{describe Dz}: specifically \eqref{EQ: nabla_z}, \eqref{EQ b_z}, and \eqref{EQ c_z}), and the product $\odot$ is defined immediately below. 
\end{definition}

Next we define a shuffle product $\odot: ZZ(\A)  \otimes ZZ(\A)  \to ZZ(\A) $.  Recall that for $CH^I(\A)$, we could define a shuffle product, but it would not be compatible with the usual Hochschild differential, $D$, unless $\A$ was a commutative DGA.  For $ZZ(\A)$, the idea is to concatenate the two zigzags while shuffling in 1's. The unit for this product will be represented by a zigzag of units in $\A$. This shuffle product will be well-defined on the equivalence classes of $ZZ(\A)$ generated by $\sim_{a,b}$ and so we can merely define the product on representative monomials.

Before we introduce the shuffle product below, we first set some conventions and notation in the following remark on shuffles.
\begin{remark}[Some comments on shuffles and their signatures] \label{shuffle remarks}
Consider the set $S_{n,m}$ of $(n,m)$ shuffles:  $S_{n,m} = \{\sigma \in S_{n+m} \, | \, \sigma(1) < \ldots < \sigma(n) \ \text{ and } \ \sigma(n+1) < \ldots < \sigma(n+m)\}$.  We can use these shuffles to get maps $\sigma: \A^{\otimes n} \times \A^{\otimes m} \to \A^{\otimes n+m}$ via $$\sigma((a_1 \otimes \ldots \otimes a_n),( a_{n+1} \otimes \ldots \otimes a_{n+m})) : = a_{\sigma^{-1}(1)}   \otimes \ldots \otimes a_{\sigma^{-1}(n+m)}.$$Next, since we will be shuffling in $1$'s on zigs and zags in the opposite order, we offer a formal convention to describe that process.  For an element $E = e_1 \otimes \ldots \otimes e_p \in \A^{\otimes p}$, we define $\overline{E}\= e_{p} \otimes \ldots \otimes e_1 \in \A^{\otimes p}$.  Now, for a shuffle $\sigma \in S_{n,m}$ interpreted as $\sigma: \A^{\otimes n} \times \A^{\otimes m} \to \A^{\otimes n+m}$, we can define $\sigma^{\Sh} \in S_{n,m}$ giving the induced map $\sigma^{\Sh}: \A^{\otimes n} \times \A^{\otimes m} \to \A^{\otimes n+m}$ to be $$\sigma^{\Sh}(L,R)\= \overline{\sigma( \overline{L}, \overline{R} )}.$$  For a given $i\in \mathbb{N}$ we will define $\sigma^{\Sh_i}$ to be $\sigma$ if $i$ is odd (i.e. on a ``zig") and $\sigma^{\Sh}$ if $i$ is even (i.e. on a ``zag").  Finally, recall that for a shuffle (or permutation of any type) we can define the signature of that shuffle by $sgn(\sigma) = (-1)^{|\sigma|}$ where $|\sigma|$ is the parity if $\sigma$, i.e. the number of transpositions (mod 2) used when writing $\sigma$ as a product of transpositions.  By our convention, for reasons having nothing to do with zigs and zags, we will need to consider $sgn(\sigma^{\Sh})$.  It is straightforward to prove that $sgn(\sigma^{\Sh}) = (-1)^{\abs{\sigma^{\Sh}}}= (-1)^{nm+ \abs{\sigma}}$ where $\sigma$ shuffles an $n$-tuple with an $m$-tuple.
\end{remark}

\begin{definition}\label{zz shuffle}The shuffle product $\odot$ for $ZZ(\A)$ is defined on representative monomials by 
\[
\underline{x}_{k, n} \odot \underline{y}_{\ell, m} =  \sum\limits_{\sigma \in S_{n,m}} (-1)^{\abs{\sigma} +(\abs{\underline{x}} - n)\cdot m}  \underline{x}_{k, n} \odot_{\sigma} \underline{y}_{\ell, m}, \]
where $\odot_{\sigma}$ is defined as
\begin{align*}
 \underline{x}_{k, n} \odot_{\sigma} \underline{y}_{\ell, m} = &\left( x_{(0,0)} \otimes \bigotimes\limits_{q=1}^k  \bigotimes\limits_{i=1}^{n+1} x_{(q,i)} \right) \odot_{\sigma} \left( y_{(0,0)} \otimes \bigotimes\limits_{r=1}^{\ell}  \bigotimes\limits_{j=1}^{m+1} y_{(r,j)} \right) \\
=  &\, x_{(0,0)} \otimes \left(\bigotimes\limits_{q=1}^k   \sigma^{\Sh_i} \left(   \bigotimes\limits_{i=1}^{n} x_{(q,i)},  \bigotimes\limits_{j=1}^{m} 1 \right) \otimes x_{(q,n+1)} \right)\\
& \otimes y_{(0,0)} \otimes \left(\bigotimes\limits_{r=1}^{\ell}   \sigma^{\Sh_i} \left(   \bigotimes\limits_{i=1}^{n} 1,  \bigotimes\limits_{j=1}^{m} y_{(r, j)} \right) \otimes y_{(r,m+1)} \right) 
\end{align*}
and extended linearly to all of $ZZ(\A)$.  Here, $\abs{\sigma}$ is the sign of the shuffle, we use $|\underline{x_{k,n}}|\= |x_{(0,0)}| + |x_{(1,1)}| + \ldots + |x_{(k,n+1)}|$, and the shuffles are happening at all $i,j$ simultaneously for a given $\sigma$.  
\end{definition}

Note in the above definition one could consider the fact that $\abs{\sigma} +(\abs{\underline{x}} - n)\cdot m = \abs{\underline{x}}\cdot m + \abs{\sigma^{\Sh}}$ in connection with Remark \ref{shuffle remarks}.

\begin{prop}\label{propzzDderivation}
The shuffle product $\odot$ is associative, and $D_z$ is a derivation with respect to $\odot$.
\begin{proof}
Since the shuffle product is a sum over all shuffles, associativity of this shuffle product essentially amounts to the fact that concatenating zigzags on top of one another is an associative operation.  With respect to $D_z$ being a derivation, we wish to show that $D_z(\underline{x} \odot \underline{y}) = D_z(\underline{x}) \odot \underline{y} + (-1)^{|\underline{x}| -n}\underline{x} \odot D_z(\underline{y})$ for any two monomials  $\underline{x}_{k,n}, \underline{y}_{j,m} \in ZZ(\A)$.  We note that $D_z(\underline{x}) \odot \underline{y}$ amounts to applying the differential only to the top ($k$-mayn) zigzags where as $\underline{x} \odot D_z(\underline{y})$ applies the differential only to the bottom ($j$-many) zigzags.  Recall that $D_z = \nabla_z + b_z + c_z$ has three components and so $D_z(\underline{x} \odot \underline{y})$ applies first $\nabla$ to each term, in which case it is either being applied to the top or the bottom zigzags and so recovering those terms, $\nabla_z(\underline{x}) \odot \underline{y}$ and $\underline{x} \odot \nabla_z(\underline{y})$, is straightforward; of course, the left end-point at which the two monomials are joined requires application of the Leibniz rule for $\nabla$.  Thus, we have $\nabla_z(\underline{x} \odot \underline{y}) = \nabla_z(\underline{x}) \odot \underline{y} + (-1)^{|\underline{x}| -n}\underline{x} \odot \nabla_z(\underline{y})$.  

The proof that $b_z(\underline{x} \odot \underline{y}) = b_z(\underline{x}) \odot \underline{y} + (-1)^{|\underline{x}| -n}\underline{x} \odot b_z(\underline{y})$  holds is similar to the proof for the shuffle product on the Hochschild complex, but we offer the general idea here.  Recall that $b_z$ has components which we can refer to as \emph{columns} and $\odot$ has components which are $(n,m)$-\emph{shuffles}.  In observing a particular term of $b_z(\underline{x} \odot \underline{y})$,  say $b_{\ell}(\underline{x} \odot_{\sigma} \underline{y})$, we see that there are essentially two cases (outside of the endpoints).  In the first case, $b_{\ell}$ is joining the $\ell$ and $\ell+1$ columns where $\odot_{\sigma}$ has not alternated drawing from both $\underline{x}$ and $\underline{y}$ in these two columns.  If, for example, $\odot_{\sigma}$  has drawn from $\underline{x}$ on columns $\ell$ and $\ell +1$, then we have  $b_{\ell}(\underline{x} \odot_{\sigma} \underline{y}) =  b_{\ell'}(\underline{x}) \odot_{\sigma'} \underline{y}$ for the appropriate $\ell'$ and $\sigma'$.  Similarly, if for example, $\odot_{\sigma}$  has drawn from $\underline{y}$ on columns $\ell$ and $\ell +1$, then we have  $b_{\ell}(\underline{x} \odot_{\sigma} \underline{y}) =  (-1)^{|\underline{x}| -n}\underline{x} \odot_{\sigma'} b_{\ell'}(\underline{y})$ for the appropriate $\ell'$ and $\sigma'$. The second case is when $\odot_{\sigma}$ has alternated drawing from $\underline{x}$ and $\underline{y}$ in the column $\ell$ and column $\ell+1$.  In this case, such a term will cancel, meaning we will observe $b_{\ell}(\underline{x} \odot_{\sigma} \underline{y}) = -b_{\ell}(\underline{x} \odot_{\sigma'} \underline{y})$, where $\sigma'$ is a transposition of $\sigma$, reversing the choice of drawing from $\underline{x}$ and $\underline{y}$ on these two columns.  While the endpoints are quite different in this construction than that of the Hochschild complex, the above ideas carry over in observing the components $b_0$ and $b_{n+m}$ applied to the shuffle product.  

Finally, we observe that $c_z(\underline{x} \odot \underline{y}) = c_z(\underline{x}) \odot \underline{y} + (-1)^{|\underline{x}| -n}\underline{x} \odot c_z(\underline{y})$.  Recall that $c_z$ had components $c_{i, \ell}$ which essentially choose a new column ($\ell$) to insert a curvature element, and then choose which zigzag ($i$) to place it along.  If the curvature term is placed along one of the top zigzags, i.e. when $i \le k$, then we have 
$c_{i, \bullet}(\underline{x} \odot \underline{y}) = c_{i, \bullet}(\underline{x}) \odot \underline{y} $ whereas if the $i$-th zigzag is one of the bottom zigzags, i.e. when $i > k$, then $c_{i, \bullet}(\underline{x} \odot \underline{y}) = (-1)^{|\underline{x}| -n}\underline{x} \odot c_{i-k, \bullet}(\underline{y})$.
\end{proof} 
\end{prop}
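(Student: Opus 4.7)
My plan is to split the proof into two independent pieces: associativity of $\odot$, and verification of the Leibniz rule separately for each of the three summands $\nabla_z$, $b_z$, and $c_z$ making up $D_z$. Throughout, I would work at the level of representative monomials and then check that both sides of each identity descend to the quotient by $\sim_{a,b}$; the relations only move units around within a fixed column, so they are manifestly compatible with the column-based operations $\nabla_z$, $b_z$, $c_z$, and the shuffle product $\odot$.

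For associativity, I would reduce to the classical associativity of the shuffle product on a tensor algebra. Concatenating three zigzag stacks is associative as an operation on stack heights, and interleaving unit columns via a shuffle is controlled by the usual bijection between $S_{n,m+p}\times S_{m,p}$ and $S_{n+m,p}\times S_{n,m}$. The only substantive check is that the sign prefactor $(-1)^{|\sigma| + (|\underline{x}|-n)m}$ behaves compatibly under that bijection; since the factor comes from Koszul-commuting $\underline{y}$ past the shift-$(-n)$ degree of $\underline{x}$, this is routine. The Leibniz rule for $\nabla_z$ is similarly direct: by construction $\nabla_z$ is the derivation extension of $\nabla$ to the tensor algebra, so its interaction with $\odot$ is the usual derivation property for an extended derivation on a shuffle product, and the sign $(-1)^{|\underline{x}|-n}$ is exactly the Koszul sign produced by commuting $\nabla$ past $\underline{x}$ with the shifted degree. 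For $c_z$, each insertion of $R$ into $\underline{x}\odot_\sigma \underline{y}$ happens on a single zigzag row which either belongs to the $\underline{x}$-stack or the $\underline{y}$-stack; grouping terms according to this dichotomy and using the relation $\sim_b$ to slide $R$ past the shuffled-in unit columns produces exactly $c_z(\underline{x})\odot \underline{y}$ and $(-1)^{|\underline{x}|-n}\underline{x}\odot c_z(\underline{y})$.

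The main obstacle is the Leibniz rule for $b_z$, which I would handle by adapting the classical argument for the Hochschild shuffle product. Fix a shuffle $\sigma$ and a column-merging index $\ell$; in the monomial $\underline{x}\odot_\sigma \underline{y}$, the two columns $\ell$ and $\ell+1$ either both come from $\underline{x}$, both come from $\underline{y}$, or alternate. The first two cases reproduce terms of $b_{\ell'}(\underline{x})\odot_{\sigma'}\underline{y}$ and $(-1)^{|\underline{x}|-n}\underline{x}\odot_{\sigma'} b_{\ell'}(\underline{y})$ for appropriate $\ell'$ and $\sigma'$, since multiplying a unit against an element from $\underline{x}$ or $\underline{y}$ recovers that element. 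In the alternating case I would pair $(\sigma,\ell)$ with $(\sigma\cdot(\ell\,\ell{+}1),\ell)$; this involution is fixed-point-free on alternating pairs, swaps the orders in which $\underline{x}$- and $\underline{y}$-entries appear in columns $\ell,\ell+1$, and changes $|\sigma|$ by one, so the two contributions cancel. The endpoint components $b_0$ and $b_n$ (and their analogues for $\underline{y}$) require separate bookkeeping because they multiply against $x_{(0,0)}$ and against the junction element where $\underline{x}$ meets $\underline{y}$; these are handled by direct inspection of the extremal columns of a shuffled monomial. The bulk of the work lies in verifying that the Koszul signs coming from the shift $[-(n+m)]$, the convention $\sigma^{\Sh_i}$ distinguishing zigs from zags, and the internal sign $(-1)^{n+\ell}$ in $b_\ell$ all combine to give the claimed sign in each of the four cases.
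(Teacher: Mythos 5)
Your plan mirrors the paper's own proof almost term for term: associativity is reduced to the standard shuffle associativity, and the Leibniz rule is verified separately for $\nabla_z$, $b_z$, and $c_z$, with $b_z$ handled by the classical pairing argument that cancels the alternating shuffles via a transposition while the non-alternating ones reassemble into $b_z(\underline{x})\odot\underline{y}$ and $\underline{x}\odot b_z(\underline{y})$. You are a bit more explicit than the paper about checking that everything descends to the quotient by $\sim_{a,b}$, which is a reasonable addition. One small correction: in your $c_z$ paragraph you invoke $\sim_b$ ``to slide $R$ past the shuffled-in unit columns,'' but $\sim_b$ only permits vertical moves within a fixed column (commuting an element to an adjacent zig or zag across units), not horizontal slides across columns. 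No relation is needed there at all: the column of $c_{i,\ell}(\underline{x})$ carrying $R$ on row $i$ and units elsewhere, once shuffled with $\underline{y}$, is literally the same column (with units filled into the $\underline{y}$-rows) as the one produced by $c_{i,\ell'}$ applied directly to $\underline{x}\odot_\tau\underline{y}$; the bijection between the index pairs $(\ell,\sigma)$ and $(\ell',\tau)$ is direct, as in the $b_z$ case. With that small clarification your proposal is correct and essentially the paper's argument.
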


\begin{theorem}\label{theorem zigzag is NGA}
Given a curved dga $(\A, \nabla, R)$, the zigzag algebra $(ZZ(\A), D_z, \underline{R_z}, \odot)$ forms a curved differential graded algebra, where $\underline{R_z}\= R \otimes 1 \otimes 1$ is the following monomial with one complete zigzag and no columns.
\begin{equation*} \underline{R_z} \quad = \quad
{\resizebox{!}{1cm}{\begin{tikzpicture}[baseline={([yshift=-.5ex]current bounding box.center)},vertex/.style={anchor=base,
    circle,fill=black!25,minimum size=18pt,inner sep=2pt}]
\clip(4.5,4.5) rectangle (8.5,8.5);    
%
%
%
%
%
   \draw[name path=zigzag2] (5,7) to (8,6) to (5,5);
   
   \draw[opacity=0,name path=s1] (5, 8) to (5,4);
   \fill [name intersections={of=s1 and zigzag2, by={a, c}}]
        (a) circle (2pt)
        (c) circle (2pt);
\node[label=left:{$R$}] at (a) {};
\node[label=left:{$1$}] at (c) {};

\draw[opacity=0,name path=s2] (8, 7) to (8,5);
   \fill [name intersections={of=s2 and zigzag2, by={b, blank}}]
        (b) circle (2pt);
\node[label=right:{$1$}] at (b) {};      
 \end{tikzpicture}}}
\end{equation*}
 \begin{proof}
We first note that $ZZ(\A)$ is in fact a graded algebra and that $\underline{R_z}$ is in fact a degree $2$ element of $ZZ(\A)$.  After having proven that $D_z$ is a derivation over $\odot$ in Proposition \ref{propzzDderivation}, it remains to be shown that $(D_z)^2 = [\underline{R_z}, - ]$, as degree $2$ operators on $ZZ(\A)$.  Since $\underline{R_z}$ has no interior columns, one can quickly calculate the relevant bracket with $\underline{x}_{k,n} = x_{(0,0)} \otimes \cdots \otimes x_{(k, n+1)}$ to be 
\[ [\underline{R_z}, \underline{x}_{k,n}] = \underline{R_z} \odot \underline{x}_{k,n} \pm \underline{x}_{k,n} \odot \underline{R_z} = (R \cdot x_{(0,0)}) \otimes \cdots \otimes x_{(k, n+1)} \pm x_{(0,0)} \otimes \cdots \otimes  (x_{(k, n+1)}\cdot R),\]
where we have applied the normalization relation $\sim_a$.

With $D_z \= \nabla_z + b + c_z$, we will analyze the six terms of $(D_z)^2 = \frac{1}{2} [D_z, D_z]$.  In attempt to communicate the important ideas, we will ignore the various $\pm$ signs in the following proof; the relevant terms will always appear in pairs with opposite signs and cancel. To start with we show that the term $[ b_z, \nabla_z]$ is equal to zero.  Recall that $b_z$ collapses the zigzag column-wise whereas $\nabla_z$ is a sum of applying the differential to each tensor factor, which, for the sake of this proof we will write as a sum of $\nabla_j$ where $j$ is a column and this term is being summed (with appropriate sign) over all zigzag (i.e. row) levels.  If the columns being collapsed by $b_i$ are not ``near'' the column where the derivative is being applied by $\nabla_j$ then cancellation happens immediately: for $i < j-1$, $b_i \circ \nabla_j = \nabla_{j-1} \circ b_i$; and for $j < i$, $b_i \circ \nabla_j = \nabla_j \circ b_i$.  If the column that $\nabla_i$ is being applied to is exactly that which $b$ has collapsed two columns into, then we use the fact that $\nabla$ is a derivation and so $\nabla_j \circ b_i$ in this case is recovered from applying first $\nabla$ on the left and right columns separately and then applying $b$ to collapse them accordingly.  In particular, for $i$ not ``at an endpoint'' of the zigzag we observe $\nabla_i \circ b_i = b_i \circ \nabla_i + b_i \circ \nabla_{i+1}$; and at the endpoints, we also have $\nabla_0 \circ b_0 = b_0 \circ \nabla_1 + b_0 \circ \nabla_0$ and $\nabla_n \circ b_n = b_n \circ \nabla_{n+1} + b_n \circ \nabla_n$.  Thus $[ b_z, \nabla_z] = 0$.

The proof that $(b_z)^2 = 0$ is almost identical to the analog in the 2-sided bar complex.  Next, to see that $c_z^2 = 0$ as well, and again using recalling the components $c_{i, \ell}$ of $c_z$ which place curvature terms at row $i$ in the new column $\ell$, it is first important to note that if columns $\ell$ and $\ell'$ are not ``near'' one another, then $[c_{\bullet, \ell}, c_{\bullet, \ell'}] = 0$.  To check what happens when the columns are near one another, we see that $c_{\bullet, \ell} \circ c_{\bullet, \ell} = c_{\bullet, \ell+1} \circ c_{\bullet, \ell}$.  Since the index $\ell$ in $c_{\bullet, \ell}$ is never pointing to an endpoint in the resulting zigzag, the above two considerations are enough to show that $c_z^2 = 0$.  

Using the Bianchi identity, $\nabla(R) = 0$, we can show that $[ \nabla_z, c_z] = 0$ as well.  The term which uses the Bianchi identity is $\nabla_i \circ c_i$, whereby a curvature term is placed in the newly-formed $i$ column, and then $\nabla$ is applied as a sum along the terms in that column.  Since $\nabla$ will only be applied to either a $1$ or a curvature term, $R$, in either case the result is zero.  The remaining terms of $[ \nabla_z, c_z] $ all cancel since the $\nabla$ is not being applied ``near'' the curvature column and so cancellation occurs for degree reasons.

Finally, we show that $\nabla_z^2 + [ b, c_z] = [ \underline{R_z}, -]$.  Recall that in our curved dga, we have $(\nabla_z)^2(\omega) = [ R, \omega]$.  Since the right hand side of this equation is reflected in the application of $(D_z)^2$ on zigzags by first inserting a curvature column to the left and the right of column $i$, (i.e. $c_{\bullet, i}$ and $c_{\bullet,i+1}$) and then collapsing that column from the left and the right to obtain $\nabla_i^2 = b_{i} \circ c_{\bullet,i} + b_i \circ c_{\bullet, i+1}$.  Of course, this is only possible when $i$ is not at an endpoint.  For the right endpoints, we have $(\nabla_n)^2 = b_{n} \circ c_{\bullet ,n+1}$, since $c_{\bullet,n+1}$ will end up placing a curvature term both before and after the endpoint (as a sum) and then $b_{n}$ collapses these appropriately.  Similarly, for left endpoints, but not at the initial or terminal zigzag (or row), we have $\nabla_0^2 = b_{0} \circ c_{\bullet, 1}$.  At the initial column and zig (i.e. at $x_{(0,0)}$), the operator $b_0 \circ c_{\bullet, 1}$ only places a curvature term on the right of $x_{(0,0)}$ before collapsing so we are off by a zigzag with initial point being $R \cdot x_{(0,0)}$.  At the terminal column and zag (i.e. at $x_{(k, n+1)}$) the operator $b_{0} \circ c_{\bullet, 1}$ only places a curvature term on the right of $x_{(0,0)}$ before collapsing (order-wise) so we are off by a zigzag with terminal point being $x_{(k,n+1)} \cdot R$.  These two terms are recovered by the operator $[ \underline{R_z}, - ]$, up to the equivalence relation imposed by $\sim_{a,b}$ from page \pageref{Page Equiv a,b}.  Note that we have analyzed the terms of $ [ b, c_z] $ where the columns being operated on are near one another but for most $i$ and $j$, we have $[ b_i , c_{\bullet,j} ] = 0$.  

Thus we have proven that $D_z^2 = \nabla_z^2 + [ b, c_z] = [ \underline{R_z}, - ]$.
\end{proof}
\end{theorem}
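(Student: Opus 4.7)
My plan is to check the three axioms of a curved dga for the tuple $(ZZ(\A), D_z, \underline{R_z}, \odot)$. The graded algebra structure is immediate from the definition of $\odot$ on representative monomials and the relations $\sim_{a,b}$; the degree count $|\underline{R_z}| = 2$ follows because $R\in \A^2$ and the tensor factors of units have degree $0$, with no shift since there are no interior columns. The Leibniz rule for $D_z$ with respect to $\odot$ is already handled by Proposition \ref{propzzDderivation}, and $D_z(\underline{R_z})=0$ reduces to $\nabla(R)=0$ (Bianchi) after noting that $b_z$ and $c_z$ produce no new terms on a zigzag with $n=0$ modulo the normalization $\sim_{a,b}$. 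So the real content is the curvature identity $D_z^{\,2} = [\underline{R_z}, -]$.

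The first step I would take is to compute $[\underline{R_z}, \underline{x}_{k,n}]$ directly: since $\underline{R_z}$ has no interior columns, the shuffle product $\odot$ reduces to concatenation, and after applying $\sim_a$ (which absorbs a zig--zag of units) the bracket collapses to $(R\cdot x_{(0,0)})\otimes \cdots\otimes x_{(k,n+1)} \;\pm\; x_{(0,0)}\otimes\cdots\otimes (x_{(k,n+1)}\cdot R)$. Next I would expand $D_z^{\,2} = \nabla_z^{\,2} + b_z^{\,2} + c_z^{\,2} + [\nabla_z, b_z] + [\nabla_z, c_z] + [b_z, c_z]$ and prove that four of these six pieces vanish separately, leaving $\nabla_z^{\,2} + [b_z, c_z]$ to match the bracket just computed.

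The four vanishing statements are each a bookkeeping exercise with a single conceptual input. I would dispatch $b_z^{\,2}=0$ by the standard simplicial argument, adapted to the fact that $b_\ell$ now collapses column $\ell$ simultaneously on every row of the zigzag. For $[\nabla_z, b_z]=0$ the derivation property of $\nabla$ on $\A$ causes the two ways of applying $\nabla$ after a column-collapse at the merged column to cancel, while non-adjacent indices commute trivially. For $c_z^{\,2}=0$, two insertions at adjacent columns $\ell, \ell+1$ cancel in pairs, and distant insertions commute. For $[\nabla_z,c_z]=0$ the only non-commuting interaction occurs when $\nabla$ hits the freshly inserted curvature column, and then $\nabla(R)=0$ together with $\nabla(1)=0$ kills it; elsewhere, sign tracking shows terms cancel.

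The main obstacle, and the step I would spend the most care on, is showing $\nabla_z^{\,2} + [b_z, c_z] = [\underline{R_z}, -]$. The plan is that on each interior column $i$ the local relation $\nabla^2 = [R,-]$ on $\A$ is reproduced by inserting a curvature column on either side of $i$ and then collapsing back: $b_i\circ c_{\bullet,i} + b_i\circ c_{\bullet,i+1}$ exactly gives a bracket $[R, x_{(\cdot,i)}]$ on that column. The delicate part is the endpoints $x_{(0,0)}$ and $x_{(k,n+1)}$, where the insert-then-collapse operation only contributes on one side and therefore leaves a single uncancelled curvature-on-the-end term. These leftover endpoint terms are precisely the contributions of $[\underline{R_z},-]$, but matching them requires a careful use of $\sim_a$ (to absorb the zig--zag of units produced in the process) and $\sim_b$ (to slide the curvature past units into the correct position), with all intermediate signs aligning through the factor $(-1)^{n+j+\epsilon}$ in $c_{j,\ell}$ and $(-1)^{n+\ell}$ in $b_\ell$. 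Once this boundary accounting is done, the proof is complete.
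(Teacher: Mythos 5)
Your proposal follows essentially the same route as the paper's proof: establish the graded algebra structure and the Leibniz rule via Proposition~\ref{propzzDderivation}, compute $[\underline{R_z},-]$ using $\sim_a$, expand $D_z^2$ into the six terms $\nabla_z^2, b_z^2, c_z^2, [\nabla_z,b_z], [\nabla_z,c_z], [b_z,c_z]$, kill four of them, and match $\nabla_z^2 + [b_z,c_z]$ against the bracket with the endpoint bookkeeping as the delicate step. The one thing you include that the paper leaves implicit is the verification of axiom (iii), $D_z(\underline{R_z})=0$, which is a small but genuine point of added care.
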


It is straightforward to apply the definitions and check that our zigzag algebra construction is functorial, as we record below.
\begin{prop}
The zigzag algebra is functorial in the sense that if $f: \mathcal{A} \to \mathcal{B}$ is a map of curved differential graded algebras, then the induced map $ZZ(f): ZZ(\mathcal{A}) \to ZZ(\mathcal{B})$ is a map of curved differential graded algebras.
\end{prop}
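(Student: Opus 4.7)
The plan is to define $ZZ(f) : ZZ(\A) \to ZZ(\B)$ tensor-factor-wise and then verify, in order, that it descends to the quotient by $\sim_{a,b}$, is a graded algebra map with respect to $\odot$, commutes with the differential $D_z$, and sends the curvature $\underline{R_z}^{\A}$ to $\underline{R_z}^{\B}$. Concretely, on a representative monomial
\[
\underline{x}_{k,n} = x_{(0,0)} \otimes \bigotimes_{i=1}^{k} \bigotimes_{p=1}^{n+1} x_{(i,p)},
\]
define
\[
ZZ(f)(\underline{x}_{k,n}) \= f(x_{(0,0)}) \otimes \bigotimes_{i=1}^{k} \bigotimes_{p=1}^{n+1} f(x_{(i,p)}),
\]
and extend linearly. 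Since $f$ is degree-preserving, $ZZ(f)$ preserves the degree $|\underline{x}_{k,n}| - n$. Well-definedness modulo $\sim_a$ and $\sim_b$ is immediate from $f(1_{\A}) = 1_{\B}$ and multiplicativity of $f$: the insertion relation $\sim_a$ inserts columns of units, which $f$ maps to columns of units, and the commuting-across-units relation $\sim_b$ is preserved because $f$ commutes with products and fixes $1$.

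Next I would verify $ZZ(f)(\underline{x} \odot \underline{y}) = ZZ(f)(\underline{x}) \odot ZZ(f)(\underline{y})$. Since the shuffle product of Definition \ref{zz shuffle} is built from tensor products, unit insertions and signs depending only on degrees (which $f$ preserves), the identity reduces to $f(1) = 1$ together with $f$ being a map of graded modules, so this is straightforward term-by-term over the shuffles $\sigma \in S_{n,m}$.

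For compatibility with $D_z = \nabla_z + b_z + c_z$, I would check each summand separately. For $\nabla_z$ in \eqref{EQ: nabla_z}, the identity $ZZ(f) \o \nabla_z = \nabla_z \o ZZ(f)$ follows from $f \o \nabla_{\A} = \nabla_{\B} \o f$ applied in each tensor slot, with signs unchanged because $f$ preserves degrees. For $b_z$ in \eqref{EQ b_z}, each $b_\ell$ is built out of multiplications in $\A$, and multiplicativity of $f$ gives $ZZ(f) \o b_\ell = b_\ell \o ZZ(f)$. For $c_z$ in \eqref{EQ c_z}, the components $c_{j,\ell}$ insert $R_{\A}$ and units; since $f(R_{\A}) = R_{\B}$ and $f(1_{\A}) = 1_{\B}$ by Definition \ref{Def cdga morphism}, we get $ZZ(f) \o c_{j,\ell} = c_{j,\ell} \o ZZ(f)$. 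Finally, $ZZ(f)(\underline{R_z}^{\A}) = f(R_{\A}) \otimes 1 \otimes 1 = R_{\B} \otimes 1 \otimes 1 = \underline{R_z}^{\B}$.

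I do not expect a real obstacle here: everything reduces to the three defining properties of a curved dga morphism (graded algebra map, compatibility with $\nabla$, and sending curvature to curvature) applied slot-wise inside the tensor expressions, combined with the elementary fact that $f$ preserves units. The only mildly delicate point is bookkeeping the signs $\beta_{(i,p)}$ and $\epsilon(\underline{x}, j, \ell)$ appearing in $\nabla_z$ and $c_z$, but these depend only on the degrees of the tensor factors, which are preserved by $f$, so they match on both sides automatically. Functoriality in $f$ (identities to identities, compatibility with composition) is immediate from the tensor-factor-wise definition.
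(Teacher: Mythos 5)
Your proposal is correct and supplies the details that the paper leaves implicit: the paper merely asserts functoriality is ``straightforward to apply the definitions and check'' and records the statement without a proof. Your slot-wise definition of $ZZ(f)$, together with the systematic verification that it descends through $\sim_{a,b}$ and commutes with $\odot$, $\nabla_z$, $b_z$, $c_z$, and sends $\underline{R_z}^{\A}$ to $\underline{R_z}^{\B}$ — each step reduced to unitality, multiplicativity, degree-preservation, $f\circ\nabla_\A=\nabla_\B\circ f$, and $f(R_\A)=R_\B$ — is exactly the argument the authors have in mind.
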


\subsection{The Zigzag algebra retracts to the original algebra}
This section adapts ideas presented in \cite[Lemma 3.3]{GJP},  where they show that the dga $\mathcal{A} \= \Omega(M, \mathbb{R})$ is chain-homotopic to the normalized Hochschild complex  $N(\mathcal{A})$. We briefly summarize their argument here for the reader's conveneince. Monomials in $N(\mathcal{A})$ could be thought of as tensored forms, denoted $\omega_0 \lbrack \omega_1 \vert \omega_2  \vert \ldots \vert \omega_j \rbrack \omega_{j+1}$.  There is a map $\alpha: \mathcal{A} \to N(\mathcal{A})$ given on monomials by $\omega \mapsto \omega \lbrack \ \rbrack 1$ and a map the other way, $\eta: N(\mathcal{A}) \to \mathcal{A}$ given on monomials by $\omega_0 \lbrack \ \rbrack \omega_1 \mapsto \omega_0  \omega_1$ and is $0$ for $j>0$.  These two maps are quasi-inverses since $\eta \circ \alpha = id_{\mathcal{A}}$ and $id_{N(\mathcal{A})} - \alpha \circ \eta = [D, s]$ for a chain-homotopy, $s: N(\mathcal{A}) \to N(\mathcal{A})$, that contracts each monomial, 
\[ \omega_0 \lbrack \omega_1 \vert \omega_2  \vert \ldots \vert \omega_j \rbrack \omega_{j+1} \mapsto \omega_0 \lbrack \omega_1 \vert \omega_2  \vert \ldots \vert \omega_j  \vert \omega_{j+1} \rbrack 1.\]

There are various reasons for why this exact same contraction and bar-Hochschild model doesn't work in the curved case.  However, we can adopt this same idea to our setting.  Below, we introduce the zigzag-analogs of the maps $\alpha$ and $\eta$ from above, and then finally we prove that they are homotopy inverse to one another.  

The map, $\alpha: \mathcal{A} \to ZZ(\mathcal{A})$, will take a monomial, $\omega \in \mathcal{A}$, and send it to a zigzag where that monomial is placed at the start and all other slots are filled with units:
\begin{equation*} 
{\resizebox{!}{2cm}{\begin{tikzpicture}[baseline={([yshift=-.5ex]current bounding box.center)},vertex/.style={anchor=base,
    circle,fill=black!25,minimum size=18pt,inner sep=2pt}]
\clip(1.5,4) rectangle (9,9);
       \node at (2,6) {\huge $\omega$};      
%
%
%
%
%
   \draw[name path=zigzag2] (5,7) to (8,6) to (5,5);
   
   \draw[opacity=0,name path=s1] (5, 8) to (5,4);
   \fill [name intersections={of=s1 and zigzag2, by={a, c}}]
        (a) circle (2pt)
        (c) circle (2pt);
\node[label=left:{$\omega$}] at (a) {};
\node[label=left:{$1$}] at (c) {};

\draw[opacity=0,name path=s2] (8, 10) to (8,4);
   \fill [name intersections={of=s2 and zigzag2, by={b, blank}}]
        (b) circle (2pt);
\node[label=right:{$1$}] at (b) {};      
         \node at (3.5,6) {\huge $\xmapsto{\alpha}$};
 \end{tikzpicture}}}
\end{equation*}

The map, $\eta: ZZ(\mathcal{A}) \to \mathcal{A}$, will take most zigzags to zero, except for when $k=2$ and $n=0$ in which case it will simply multiply the endpoints: 
\begin{equation*} 
{\resizebox{!}{2cm}{\begin{tikzpicture}[baseline={([yshift=-.5ex]current bounding box.center)},vertex/.style={anchor=base,
    circle,fill=black!25,minimum size=18pt,inner sep=2pt}]
\clip(-0.5,4) rectangle (9,8);
       \node at (7.5,6) { $\omega_{(0,0)} \cdot \omega_{(1,1)} \cdot \omega_{(2,1)} $};      
%
%
%
%
%
   \draw[name path=zigzag2] (1,7) to (4,6) to (1,5);
   
   \draw[opacity=0,name path=s1] (1, 8) to (1,4);
   \fill [name intersections={of=s1 and zigzag2, by={a, c}}]
        (a) circle (2pt)
        (c) circle (2pt);
\node[label=left:{$\omega_{(0,0)}$}] at (a) {};
\node[label=left:{$\omega_{(2,1)}$}] at (c) {};

\draw[opacity=0,name path=s2] (4, 10) to (4,4);
   \fill [name intersections={of=s2 and zigzag2, by={b, blank}}]
        (b) circle (2pt);
\node[label=right:{$\omega_{(1,1)}$}] at (b) {};      
         \node at (5.5,6) {\huge $\xmapsto{\eta}$};
 \end{tikzpicture}}}
\end{equation*}

These two maps will be proven to be quasi-inverse to one another by introducing a chain homotopy on $ZZ(\mathcal{A})$.  Similar to the chain homotopy of \cite{GJP}, the map, $s$, will place a unit at the right end-point of all zigzags and pull the data which was on those endpoints into the interior of the zigzags.  The visual representation of what $s: ZZ(\mathcal{A}) \to ZZ(\mathcal{A})$ will do on zigzags is the following:
\begin{equation*} 
{\resizebox{!}{3cm}{\begin{tikzpicture}[baseline={([yshift=-.5ex]current bounding box.center)},vertex/.style={anchor=base,
    circle,fill=black!25,minimum size=18pt,inner sep=2pt}]
\clip(-1,4) rectangle (9,11);
%
%
%
%
%
   \draw[name path=zigzag1] (0,10) to (3,9) to (0,8) to (3,7) to (0,6);
   \draw[opacity=0] (-0.2, -0.2) rectangle (18, 12.2);
   
   \draw[opacity=0,name path=t1] (0, 10) to (0,5);
   \fill [name intersections={of=t1 and zigzag1, by={a,  i,q}}]
        (a) circle (2pt)
        (i) circle (2pt)
        (q) circle (2pt);
\node[label=left:{$a$}] at (a) {};
\node[label=left:{$i$}] at (i) {};
\node[label=left:{$q$}] at (q) {};

\draw[opacity=0,name path=t2] (0.5, 10) to (0.5,6);
   \fill [name intersections={of=t2 and zigzag1, by={b,h, j, p}}]
        \foreach \x in {b, h, p}
        {(\x) circle (2pt)}
        (j) circle (2pt); 
    \foreach \x in {b, h, p}
        {\node[above=2pt] at (\x) {$\x$};}
        \node[below] at (j) {$j$};

\draw[opacity=0,name path=t3] (1.5, 10) to (1.5,6);
   \fill [name intersections={of=t3 and zigzag1, by={c,g, k, o}}]
        \foreach \x in {c,g, k, o}
        {(\x) circle (2pt)};
        \foreach \x in {c,g, k, o}
        {\node[label=above:{$\x$}] at (\x) {};}
        
\draw[opacity=0,name path=t4] (2, 10) to (2,6);
   \fill [name intersections={of=t4 and zigzag1, by={d,f, l, n}}]
        \foreach \x in {d,f, l, n}
        {(\x) circle (2pt)};
        \foreach \x in {d,f, l, n}
        {\node[label=above:{$\x$}] at (\x) {};}
        
\draw[opacity=0,name path=t5] (3, 10) to (3,6);
   \fill [name intersections={of=t5 and zigzag1, by={e,  m}}]
        \foreach \x in {e,m}
        {(\x) circle (2pt)};
        \foreach \x in {e,m}
        {\node[label=above:{$\x$}] at (\x) {};}
         
%
%
%
%
%
   \draw[name path=zigzag2] (5,10) to (8,9) to (5,8) to (8,7) to (5,6);
   \draw[opacity=0] (-5.2, -5.2) rectangle (10, 12.2);
   
   \draw[opacity=0,name path=s1] (5, 10) to (5,5);
   \fill [name intersections={of=s1 and zigzag2, by={a,  i,q}}]
        (a) circle (2pt)
        (i) circle (2pt)
        (q) circle (2pt);
\node[label=left:{$a$}] at (a) {};
\node[label=left:{$i$}] at (i) {};
\node[label=left:{$q$}] at (q) {};

\draw[opacity=0,name path=s2] (5.5, 10) to (5.5,6);
   \fill [name intersections={of=s2 and zigzag2, by={b,h, j, p}}]
        \foreach \x in {b, h, p}
        {(\x) circle (2pt)}
        (j) circle (2pt); 
    \foreach \x in {b, h, p}
        {\node[above=2pt] at (\x) {$\x$};}
        \node[below] at (j) {$j$};

\draw[opacity=0,name path=s3] (6.5, 10) to (6.5,6);
   \fill [name intersections={of=s3 and zigzag2, by={c,g, k, o}}]
        \foreach \x in {c,g, k, o}
        {(\x) circle (2pt)};
        \foreach \x in {c,g, k, o}
        {\node[label=above:{$\x$}] at (\x) {};}
        
\draw[opacity=0,name path=s4] (7, 10) to (7,6);
   \fill [name intersections={of=s4 and zigzag2, by={d,f, l, n}}]
        \foreach \x in {d,f, l, n}
        {(\x) circle (2pt)};
        \foreach \x in {d,f, l, n}
        {\node[label=above:{$\x$}] at (\x) {};}
        
\draw[opacity=0,name path=s5] (7.5, 10) to (7.5,6);
   \fill [name intersections={of=s5 and zigzag2, by={e, z, m, w}}]
        \foreach \x in {e,z,m,w}
        {(\x) circle (2pt)};
        \foreach \x in {e,m}
        {\node[label=above:{$\x$}] at (\x) {};}
        \foreach \x in {z,w}
        {\node[label=below:{$1$}] at (\x) {};}

 \draw[opacity=0,name path=s6] (8, 10) to (8,6);
   \fill [name intersections={of=s6 and zigzag2, by={x,  y}}]
        \foreach \x in {x,y}
        {(\x) circle (2pt)};
        \foreach \x in {x,y}
        {\node[label=above:{$1$}] at (\x) {};}
        
         \node at (4,8) {\huge $\xmapsto{s}$};
\end{tikzpicture}}}
\end{equation*}

\begin{prop}\label{algebraic quasi-iso}
The maps of curved dgas $\alpha: {ZZ}(\mathcal{A}) \to \mathcal{A}$ and $\eta: \mathcal{A} \to {ZZ}(\mathcal{A})$ are homotopy equivalences.  In particular, there is an isomorphism between their curved cohomology algebras $H_{cur}\left( \mathcal{A} \right) \iso H_{cur}\left({ZZ}(\mathcal{A})\right)$.
\begin{proof}
Define the map $\eta: \mathcal{A} \to  {ZZ}(\mathcal{A})$ to be the map given on monomials by
\[ \omega \xmapsto{\eta} \omega \otimes  1\otimes  1, \]
so that, in the notation of Definition \ref{def CH^ZZ}, $\eta(\omega)$ has $n=0$ and $k=2$ with $1 = s(\omega)_{(1,1)} = s(\omega)_{(2,1)}.$  It is straightforward to check that $\eta$ is a morphism of curved dga's (Definition \ref{Def cdga morphism}) with respect to the differentials, and it is easily seen to be a map of algebras once the normalization relation (i.e. $\sim_{a,b}$ from page \pageref{Page Equiv a,b}) on the zigzags are used.  Next, we define the map $\alpha:  {ZZ}(\mathcal{A}) \to  \mathcal{A}$ on a monomial $\underline{x}_{k,n}$ (see Definition \ref{def CH^ZZ} for notation)  by 
\begin{equation}
\alpha( \underline{x}_{k,n}) = \begin{cases} x_{(0,0)} \cdot x_{(1,1)} x_{(2,1)} \cdot \ldots \cdot  x_{(k,1)}& \text{for }  n=0 \\
0 & \text{otherwise}.
\end{cases}
\end{equation}
Again, once one uses the normalization relation, checking that $\alpha$ is a morphism of curved dgas is straightforward.  A simpler observation is that we have $\alpha \circ \eta = id_{\mathcal{A}}$, since the image of $\eta$ is a zigzag with $n=0$ and so $\alpha \circ \eta (\omega) = \omega \cdot 1 \cdot 1$.  

Next, we define the contracting homotopy, $s: ZZ^p(\mathcal{A}) \to ZZ^{p-1}(\mathcal{A})$, that will satisfy our desired homotopy equation $id + \eta \o \alpha = [D_z, s]$.  (Just as in the proof of Theorem \ref{theorem zigzag is NGA}, the intermediate arguments in this proof will be modulo $\pm$ sign.)  This map $s$ will take a monomial with $n$-columns to a monomial with $(n+1)$-columns, without changing the number of zigzags, by moving the data at the right endpoints in towards the newly formed $(n+1)$st-column and then place a unit at the right endpoints, now labeled by $(n+2)$.  To be precise, for a monomial $\underline{x}_{k,n}$ we define the monomial $s( \underline{x})_{k,n+1}$ by 

 \begin{align*}
 s(\underline{x}_{k,n})  = x_{(0,0)}&\otimes (x_{(1,1)} \otimes \ldots \otimes x_{(1,n)} \otimes  x_{(1,n+1)} \otimes 1) \otimes \\
  \ldots &\otimes ( 1  \otimes x_{(2i,1)} \otimes \ldots \otimes x_{(2i,n+1)}  )\otimes \\
    \ldots &\otimes ( x_{(2j+1,1)} \otimes \ldots \otimes x_{(2j+1,n+1)} \otimes 1 )\otimes \\
  \ldots &\otimes (1 \otimes x_{(k,1)} \otimes \ldots \otimes x_{(k,n+1)} )
 \end{align*}

We now prove that $\eta \circ \alpha = D_z \circ s + s \circ D_z$.  In particular, we first show that $b_z \circ s  + s \circ b_z = \eta \circ \alpha$.  Consider a general monomial $\underline{x}_{k,n}$, with $n>0$, and recall that the operator $b_z = \sum\limits_{0\le \ell  \le n} b_{\ell}$ combines the $\ell$-th with  the $(\ell +1)$-th column.  Note that $(b_z \circ s)(\underline{x}_{k,n})$ is a sum of $(n+2)$-many terms, while $(s \circ b_z)(\underline{x}_{k,n})$ is a sum of $(n+1)$-many terms.   For each $\ell < n$, we obtain identical terms for $b_{\ell} \circ s$ and $s \circ b_{\ell}$ of the form,
 \begin{align*}
 x_{(0,0)}&\otimes (x_{(1,1)} \otimes \ldots \otimes x_{(1, p)} \cdot x_{(1, p+1)} \ldots \otimes x_{(1,n)} \otimes  x_{(1,n+1)} \otimes 1) \otimes \\
  \ldots &\otimes ( 1  \otimes x_{(2i,1)} \otimes  \ldots \otimes x_{(2i, n-p)} \cdot x_{(2i, n-p+1)} \ldots \otimes x_{(2i,n+1)}  )\otimes \\
    \ldots &\otimes ( x_{(2j+1,1)} \otimes \ldots \otimes x_{(2j+1, p)} \cdot x_{(2j+1, p+1)} \ldots \otimes x_{(2j+1,n+1)} \otimes 1 )\otimes \\
  \ldots &\otimes (1 \otimes x_{(k,1)} \otimes \ldots \otimes x_{(k, n-p)} \cdot x_{(k, n-p+1)} \ldots \otimes x_{(k,n+1)} )
 \end{align*}
and so they cancel.  For $\ell = n$, we obtain a term for each $b_{\ell} \circ s$ and $s \circ b_{\ell}$ which are not immediately equal, but under the normalization relations are equal to a monomial of the form, 
 \begin{align*}
 x_{(0,0)}&\otimes (x_{(1,1)} \otimes \ldots \otimes x_{(1,n)} \cdot  x_{(1,n+1)} \otimes 1) \otimes \\
  \ldots &\otimes (  x_{(2i,1)} \otimes  \ldots \otimes x_{(2i,n+1)}  )\otimes \\
    \ldots &\otimes ( x_{(2j+1,1)} \otimes \ldots  \otimes x_{(2j+1,n)} \cdot x_{(2j+1,n+1)} \otimes 1 )\otimes \\
  \ldots &\otimes ( x_{(k,1)} \otimes \ldots  \otimes  x_{(k,n+1)} )
 \end{align*}
and these two terms cancel as well.  Finally note that when $\ell = n+1$, $b_{\ell} \circ s = id$ where there is no such term for $s \circ b_{\ell}$.  In summary, for $n>0$, since $(\eta \circ \alpha) (\underline{x}_{k,n}) = 0$, we have the desired 
\[( id + \eta \circ \alpha) (\underline{x}_{k,n}) = (b_z \circ s + s\circ b_z) (\underline{x}_{k,n}).\]  
To conclude our analysis of the $b_z$-component of $D_z$, we now consider the case where $n = 0$, so that for such a monomial, $\underline{x}_{k,0}$, $b_z$ is defined to be zero on this element.  Thus one can check that,
\begin{align*} (b_z \circ s + s\circ b_z) (\underline{x}_{k,0}) &= (b_z \circ s) (\underline{x}_{k,0}) + (s\circ b_z) (\underline{x}_{k,0}) = (b_z \circ s) (\underline{x}_{k,0}) \\
 &= (b_0 \circ s) (\underline{x}_{k,0})   +  (b_1 \circ s) (\underline{x}_{k,0})   =(\eta \circ \alpha) ( \underline{x}_{k,0})+  \underline{x}_{k,0},
\end{align*}
where the last equality involving $\eta \circ \alpha$ makes use of the normalization relations.  To offer some additional detail, the calculation is the following:
\begin{align*}
(b_0 \circ s) (\underline{x}_{k,0}) &= (b_0 \circ s) \left( x_{(0,0)} \otimes (x_{(1,1)}) \otimes (x_{(2,1)}) \ldots \otimes (x_{(k,1)}) \right) \\
&= b_0 \left( x_{(0,0)} \otimes (x_{(1,1)} \otimes 1) \otimes (1 \otimes x_{(2,1)}) \ldots \otimes (1 \otimes x_{(k,1)}) \right)\\
&=  x_{(0,0)}\cdot  x_{(1,1)} \otimes 1\otimes ( x_{(2,1)} \cdot x_{(3,1)}) \ldots \otimes (x_{(k-1,1)}\cdot x_{(k,1)})
\end{align*}
where the last term is equivalent to $(\eta \circ \alpha) (\underline{x}_{k,0}) = x_{(0,0)} \cdot x_{(1,1)} \cdot \ldots \cdot x_{(k,1)} \otimes 1 \otimes 1$, which is a monomial with $n=0$ and $k=2$, by way of $\sim_{a,b}$.

For the component, $\nabla_z$, of $D_z$, we note that $\nabla_z \circ s + s \circ \nabla_z = 0$ as a straightforward calculation since $\nabla(1) = 0$.  We also have $c_z \circ s - s \circ c_z = 0$ as well, however, there is one application of $\sim_{a,b}$ where a curvature term is placed just before the right endpoint, both on a zig, and then on a zag.  Since these two curvature terms are placed at the same column and are separated by a unit at the right endpoint, they are equivalent after applying the relation and so we obtain the desired result: $b \circ s + s \circ b = id + \eta \circ \alpha.$  
\end{proof}
\end{prop}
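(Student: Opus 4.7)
The plan is to follow the Getzler--Jones--Petrack template outlined immediately above the statement, adapted to zigzags and to the curved setting. The strategy decomposes into four pieces. First, I would verify that $\eta$ and $\alpha$ are genuine $\NGA$-morphisms in the sense of Definition \ref{Def cdga morphism}: (i) $\eta$ is an algebra map, which follows once one applies $\sim_a$ to the concatenation of $\omega \otimes 1 \otimes 1$ with $\omega' \otimes 1 \otimes 1$; (ii) $\eta$ sends curvature to curvature since $\eta(R) = R \otimes 1 \otimes 1 = \underline{R_z}$; (iii) $D_z \circ \eta = \eta \circ \nabla$ reduces to a short check because the image $\eta(\omega)$ has no interior columns, so $b_z$ vanishes on it and the $c_z$-contribution collapses modulo $\sim_{a,b}$. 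The analogous verifications for $\alpha$ are also straightforward after invoking $\sim_{a,b}$. The equality $\alpha \circ \eta = \mathrm{id}_{\mathcal{A}}$ is immediate because $\eta(\omega)$ has $n=0$ and $\alpha$ then multiplies the three endpoints.

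Next I would explicitly write down the contracting homotopy $s\colon ZZ(\mathcal{A})^{\bullet} \to ZZ(\mathcal{A})^{\bullet-1}$ illustrated in the picture above. On a monomial $\underline{x}_{k,n}$, the map $s$ produces a monomial with $n+1$ interior columns by moving each right endpoint $x_{(i,n+1)}$ inward to a new $(n+1)$-st column (and filling the adjacent rows appropriately with units depending on whether the row is a zig or a zag) and placing a unit at the new right endpoints. The grading shift passes from $-n$ to $-(n+1)$, so $s$ indeed has degree $-1$. The heart of the proof is then the homotopy identity
\begin{equation*}
\mathrm{id}_{ZZ(\mathcal{A})} - \eta \circ \alpha \;=\; D_z \circ s + s \circ D_z,
\end{equation*}
which I expand by splitting $D_z = \nabla_z + b_z + c_z$.

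For $\nabla_z$, the identity $\nabla_z \circ s + s \circ \nabla_z = 0$ is immediate because every tensor factor created or relocated by $s$ is either a unit (annihilated by $\nabla$) or is pulled back to its old location with the same sign after applying $\sim_{a,b}$. For $c_z$, the two insertions of a curvature column produced by $c_z \circ s$ at the freshly created position cancel against the corresponding insertions from $s \circ c_z$, again using $\sim_{a,b}$ to slide a curvature term past a trailing unit. The entire content of the argument is therefore concentrated in the $b_z$-check, which I would carry out by writing $b_z = \sum_{0 \le \ell \le n} b_\ell$ and showing that for $\ell < n$ the two contributions $b_\ell \circ s$ and $s \circ b_\ell$ pair up and cancel as in the classical Hochschild computation, while the new ``top'' term $b_{n+1} \circ s$, which has no partner in $s \circ D_z$, reproduces $\mathrm{id}_{ZZ(\mathcal{A})}$. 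The boundary case $\ell = n$ also requires the normalization $\sim_{a,b}$ to match. Finally, the case $n=0$ must be treated separately because $b_z \underline{x}_{k,0} = 0$; here $(b_0 + b_1) \circ s(\underline{x}_{k,0})$ expands, after collapsing the newly inserted zig-and-zag of units via $\sim_a$, to $\underline{x}_{k,0} + \eta \circ \alpha(\underline{x}_{k,0})$.

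The main obstacle is bookkeeping: keeping track of the Koszul signs arising from $\nabla_z$, the $(-1)^{n+\ell}$ factors in $b_\ell$, the total degree shift by $-n$, and the asymmetry between zigs and zags (encoded by the $\sigma^{\Sh}$ convention of Remark \ref{shuffle remarks}), and then confirming that the numerous near-cancellations actually occur modulo the relations $\sim_{a,b}$. The conceptual picture is clean --- $s$ is a column-extension contraction pulling everything toward the right endpoint and then padding with units --- but each step of the verification must be done separately on odd and even rows and at both the left and right ends of the zigzag. Once the homotopy equation is established, the induced isomorphism $H_{cur}(\mathcal{A}) \iso H_{cur}(ZZ(\mathcal{A}))$ on curved cohomology follows at once from Proposition \ref{prop:AlgChainHomotopy}(2).
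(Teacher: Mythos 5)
Your proposal is correct and follows essentially the same route as the paper's proof: the same maps $\eta$ and $\alpha$, the same contracting homotopy $s$ that extends each row by one column and pads with units, and the same decomposition of the verification into $\nabla_z$, $b_z$, and $c_z$ pieces, with the $b_{n+1}\circ s = \mathrm{id}$ term and the $n=0$ base case handled identically. The only minor difference is cosmetic — you write the homotopy identity in the standard normalized form $\mathrm{id} - \eta\circ\alpha = [D_z,s]$ where the paper works ``modulo $\pm$'' throughout — but the substance of the argument is the same.
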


\section{A Chen Map out of the ZigZag Hochschild Complex}\label{section iterated integral}
Our ultimate goal for $ZZ(\A)$ is to model $\Omega_{\wt \nabla}(PM, ev_0^* \End (E))$, the non-abelian curved dga of endormorphism-valued differential forms on the path space.  To recall from Examples \ref{EX: Omega nabla} and  \ref{EX: Omega(PM)}, let $(E, \nabla^E)\to M$ be an arbitrary vector bundle with connection.  Then $(\E,\nabla)\to M$ denotes the induced endomorphism bundle $\E \= E\otimes E^\vee = \End(E)$ with the naturally induced connection $\nabla = \nabla^E \otimes \nabla^{E \vee} =  ad_{\nabla^E}  = [\nabla^E, -]$.  We view the path space $PM = C^\infty([0,1], M)$ as an infinite-dimensional smooth manifold; the tangent space  $T(PM)_\gamma$ is $\Omega^0([0,1], \gamma^* TM)$, the collection of smooth vector fields along $\gamma$.  

For any $t \in [0,1]$ let $(\E_t, \nabla^t) \to PM$ denote the pullback of the endomorphism bundle along the evaluation map $ev_t$.
\[ \begin{tikzcd}
(\E_t, \nabla^t) \= (ev_t^* \E, ev_t^*\nabla) \ar[r] \ar[d] & (\E, \nabla) \ar[d] \\
PM \ar[r, "ev_t"] & M
\end{tikzcd}  \]
Similarly, let $E_t$ and $E_t^\vee$ denote the pullbacks of $E$ and $E^\vee$, respectively.  When $t=0$, which is the case we use most, we use the abbreviation $(\E_0, \wt{\nabla}) \= (ev_0^* \End (E), ev_0^* \nabla) \to PM$ and denote the curved dga by $\Omega_{\wt{\nabla}}(PM, \E_0)$.

The goal of this section is to construct a homotopy equivalence in $\NGA$ 
\[ It \colon ZZ\big(\Omega_\nabla(M,\E)\big) \overset{\simeq}{\longrightarrow} \Omega_{\wt{\nabla}}(PM,\E_0)  \]
that is analogous to Chen's iterated integral map.  To do so, we first recall some basic facts about integration (Subsection \ref{subsec:Integration}) and parallel transport (Subsection \ref{subsec:ParallelTransport}).  In Subsection \ref{subsec: define It}, we define the iterated integral map and prove it is a morphism of curved dgas.  It will then easily follow that the map a homotopy equivalence.

\subsection{Integration}\label{subsec:Integration}
Suppose $Y \xrightarrow{p} X$ is a smooth fiber bundle whose fibers are diffeomorphic to $F$, a compact oriented $n$-dimensional manifold.  The classical integration over fibers map
\begin{equation} \label{eq:Integration} \int\limits_F \colon \Omega^\bullet (Y) \longrightarrow \Omega^{\bullet - n} (X) \end{equation}
exists and satisfies the Stokes formula
\begin{equation}\label{eq:ClassicalStokes}
(-1)^n \,\,\, d \int\limits_F \omega \quad  = \quad  \int\limits_F d \omega \quad - \quad \int\limits_{\partial F} \omega.\end{equation}

If $E \to Y$ is an arbitrary vector bundle, the integration map \eqref{eq:Integration} need not extend to the domain $\Omega(Y,E)$.  For $\omega \in \Omega(Y,E)$, defining  $(\int_F \omega)$ at the point $x\in X$ would essentially involve integrating $\omega_y$ over all $y\in p^{-1}(x)$.  Since the $\omega_y$ are vectors in different vector spaces $E_y$, it does not make sense to sum or integrate them.  However, this is no longer a problem if $(p^*E, p^*\nabla ) \to Y$ is a bundle (with connection) that is pulled back from a bundle on the base $(E,\nabla) \to X$.  Note that while the connection $\nabla$ is essential to the Stokes formula, it is not used to define the integration.

\begin{prop}\label{prop:Integration}Let $F \hookrightarrow Y \xrightarrow{p}X$ be a smooth fiber bundle, with fiber $F$ a compact oriented $n$-manifold, and $(E,\nabla)\to X$ a vector bundle with connection.  Then, the usual integration over fibers  \eqref{eq:Integration} naturally extends to a map 
\begin{equation}\label{eq:BundleIntegration} \int\limits_F \colon \Omega_{(p^*\nabla)}^\bullet(Y, p^*E) \to \Omega_{\nabla}^{\bullet-n}(X,E) \end{equation} that satisfies the (generalized) Stokes formula
\begin{equation}\label{eq:BundleStokes} (-1)^n \,\,\, \nabla \int\limits_F \omega \quad = \quad \int\limits_F (p^*
\nabla) \omega \quad -  \quad \int\limits_{\partial F} \omega.  \end{equation}
\end{prop}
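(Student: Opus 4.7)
My plan is to reduce both the construction and the Stokes identity to the classical (scalar-valued) cases by working in a local frame for $E$. The first step would be to pick an open $U \subset X$ supporting a frame $\{e_i\}$ of $E$; every $\omega \in \Omega^\bullet(p^{-1}U, p^*E)$ then decomposes uniquely as $\omega = \sum_i \omega^i \otimes p^*e_i$ with $\omega^i \in \Omega^\bullet(p^{-1}U)$, and I would set
\[ \int_F \omega \= \sum_i \Big( \int_F \omega^i \Big) \otimes e_i \in \Omega^{\bullet-n}(U, E),\]
where each summand uses the classical integration \eqref{eq:Integration}.

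To show the definition is globally well-defined, I would invoke the classical projection formula $\int_F(\alpha \wedge p^*\beta) = (\int_F \alpha) \wedge \beta$ for $\alpha \in \Omega(Y)$, $\beta \in \Omega(X)$. Given another frame $e'_i = \sum_j g_{ij} e_j$ on an overlap, with $g_{ij}$ smooth functions on the base, the two sets of components differ by these transition functions pulled back along $p$; the projection formula then shows that $\int_F$ commutes with the change of frame, so the local definitions patch to a global map \eqref{eq:BundleIntegration}.

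For the Stokes formula \eqref{eq:BundleStokes}, I would expand both sides in the local frame. Writing $\nabla e_i = \sum_j A^j_i \otimes e_j$ with $A^j_i \in \Omega^1(U)$, the Leibniz rule for $p^*\nabla$ gives $p^*\nabla \omega = \sum_i d\omega^i \otimes p^*e_i + (-1)^{|\omega^i|} \sum_{i,j} (\omega^i \wedge p^*A^j_i) \otimes p^*e_j$, and then the projection formula produces $\int_F p^*\nabla \omega = \sum_i (\int_F d\omega^i) \otimes e_i + (-1)^{|\omega|} \sum_i (\int_F \omega^i) \wedge \nabla e_i$. Meanwhile, Leibniz for $\nabla$ on the base yields $\nabla \int_F \omega = \sum_i d(\int_F \omega^i) \otimes e_i + (-1)^{|\omega|-n} \sum_i (\int_F \omega^i) \wedge \nabla e_i$. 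I would then apply the scalar Stokes formula \eqref{eq:ClassicalStokes} to each $\omega^i$, multiply through by $(-1)^n$, and match the two expressions up to the boundary contribution $\int_{\partial F} \omega = \sum_i (\int_{\partial F} \omega^i) \otimes e_i$.

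The argument rests on three standard ingredients---the classical projection formula, the classical Stokes formula \eqref{eq:ClassicalStokes}, and the Leibniz rule for $\nabla$---so there is no conceptual obstacle. The main difficulty will be purely sign bookkeeping: one must simultaneously track the Koszul signs of pushing $\nabla$ past an integrated form of degree $|\omega|-n$, the $(-1)^n$ prefactor in the scalar Stokes formula, and the signs from $p^*\nabla$ acting across the bundle-valued tensor product. Once these are aligned, the two connection-dependent terms differ by $(-1)^{2n}=1$ and the only surviving discrepancy is the boundary term, completing the proof.
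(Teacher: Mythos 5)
Your proposal is correct and follows essentially the same route as the paper: choose a local frame $\{e_\alpha\}$ on the base, define $\int_F(\omega^\alpha\otimes p^*e_\alpha) := (\int_F\omega^\alpha)\otimes e_\alpha$, and derive the bundle-valued Stokes formula by combining the Leibniz rule for $\nabla$ with the classical scalar Stokes formula applied to each component. The one small addition you make is to explicitly verify frame-independence via the projection formula; the paper leaves this step implicit.
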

\begin{proof}
Since differential forms are sections of a bundle, it suffices to define $(\int_F\omega)_x$ for general $x\in X$.  Let $\{ e_\alpha \}$ be a local basis of sections of $E$ near $x \in X$.  At all points $y$ such that $p(y)=x$, this induces a natural basis $\{ p^*e_\alpha \}$ of $p^*E$.  For $\omega = \omega^\alpha \otimes (p^*e_\alpha) \in  \Omega^\bullet(Y, p^*E)$, the integral is defined $\int_F (\omega^\alpha \otimes (p^*e_\alpha)) \= (\int_F \omega^\alpha) \otimes e_\alpha \in \Omega^{\bullet -n}(X,E)$.  

The generalized Stokes formula \eqref{eq:BundleStokes}  follows from a direct calculation.
\begin{align*}
(-1)^n \, \nabla\int\limits_F \omega &=  (-1)^n \, \nabla \! \int\limits_F \omega^\alpha \otimes p^*e_\alpha = (-1)^n \nabla \left[ (\int\limits_F \omega^\alpha)\otimes e_\alpha \right] \\
&=  (-1)^n \left[  (d\int\limits_F \omega^\alpha) \otimes e_\alpha + (-1)^{|\omega|-n} (\int\limits_F \omega^\alpha) \otimes \nabla e_\alpha \right] \\
&= \left( \int\limits_F d\omega^\alpha \right) \otimes e_\alpha  -  \int\limits_{\partial F} \omega^\alpha \otimes e_\alpha + (-1)^{|\omega|} \int\limits_F \omega \otimes (p^*\nabla)p^*e_\alpha \\
&= \int\limits_F (p^*\nabla) (\omega^\alpha\otimes p^*e_\alpha) - \int\limits_{\partial F} \omega^\alpha\otimes e_\alpha = \int\limits_F (p^*\nabla) \omega - \int\limits_{\partial F} \omega 
\end{align*}
The first and fourth lines rewrite using definitions, the second line uses the Leibniz property of $\nabla$, and the third line uses the classical Stokes formula \eqref{eq:ClassicalStokes}.
\end{proof}

\begin{remark}
The $\pm$ signs in \eqref{eq:BundleStokes} are obtained directly from the $\pm$ signs in the original Stokes formula \eqref{eq:ClassicalStokes}.  If a different $\pm$ sign convention is used, the bundle-valued Stokes formula will still be the classical formula with $d$ replaced by $\nabla$.
\end{remark}

The Stokes formula is crucial to verifying that our iterated integral map in Definition \ref{zigzag Chen} is a morphism in $\NGA$,  but it also implies the homotopy equivalence $\Omega_{\wt{\nabla}}(PM, \E_0) \simeq \Omega_{\nabla}(M, \E)$.

\begin{prop}\label{prop:PMhtpyM}The curved dgas $\Omega_{\wt{\nabla}}(PM, \E_0)$ and $\Omega_{\nabla}(M, \E)$ are naturally homotopy equivalent.  More concretely, inclusion along constant maps $i\colon M\to PM$ and the evaluation map $ev_0\colon PM\to M$ induce morphisms in $\NGA$ 
\[ \begin{tikzcd}
\Omega_{\wt\nabla} (PM, \E_0) \ar[r, bend left=25, "{i^*}"]  & \Omega_{\nabla}(M, \E) \ar[l, bend left=25, "{ev_0^*}"]
\end{tikzcd} \]
such that  $i^* \o ev_0^*$ is the identity and $ev_0^* \o i^*$ is homotopic to the identity.
\end{prop}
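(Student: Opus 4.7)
The plan is to verify first that $i^*$ and $ev_0^*$ are morphisms in $\NGA$, which follows immediately from Proposition \ref{prop:BundleDGAMaps} applied to the obvious morphisms $(i, id)$ and $(ev_0, id)$ in $\Vect^{\nabla}$ (both base maps lift canonically to connection-preserving bundle maps since the path-space bundles are defined as pullbacks along $ev_0$). Since $ev_0 \circ i = id_M$, functoriality immediately gives $i^* \circ ev_0^* = id$ on $\Omega_{\nabla}(M, \E)$. The substantive direction is to produce a chain homotopy witnessing $ev_0^* \circ i^* \simeq id$ on $\Omega_{\wt{\nabla}}(PM, \E_0)$.

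For this, I will use the standard smooth retraction $H \colon [0,1] \times PM \to PM$ defined by $H(s, \gamma)(t) \= \gamma(st)$. It restricts to $H|_{s=1} = id_{PM}$ and $H|_{s=0} = i \circ ev_0$. Writing $\pi \colon [0,1] \times PM \to PM$ for the projection, the key identity $ev_0 \circ H = ev_0 \circ \pi$ gives a canonical identification of bundles with connection $H^*(\E_0, \wt{\nabla}) \iso \pi^*(\E_0, \wt{\nabla})$ on $[0,1]\times PM$, since both are the pullback of $(\E,\nabla)$ along $ev_0 \circ H = ev_0 \circ \pi$. This is exactly the hypothesis needed to invoke Proposition \ref{prop:Integration} over the compact fiber $[0,1]$, so I can define the candidate homotopy
\[ h(\omega) \= \int\limits_{[0,1]} H^* \omega. \]
Applying the generalized Stokes formula \eqref{eq:BundleStokes} with $n=1$, combined with the fact that $H^*$ intertwines the curved differentials (so $H^*(\wt{\nabla} \omega) = \pi^*\wt{\nabla}(H^* \omega)$), produces
\[ -\wt{\nabla} \, h(\omega) \; = \; h(\wt{\nabla}\omega) \; - \; \bigl(\omega - ev_0^* i^* \omega\bigr), \]
where the boundary contribution $\int_{\partial [0,1]} H^*\omega = \omega - ev_0^*i^*\omega$ comes from the restrictions of $H$ at the two endpoints. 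Rearranging yields $id - ev_0^*\circ i^* = \wt{\nabla} \circ h + h \circ \wt{\nabla}$, matching Definition \ref{defn:ChainHtpy} with $f = id$ and $g = ev_0^* \circ i^*$ (up to an overall sign absorbed into $h$).

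The main obstacle will be the technical justification that the fiber integration and Stokes formalism of Proposition \ref{prop:Integration} apply in this infinite-dimensional setting: namely that $H$ is smooth as a map of Fréchet path-space manifolds, that $\pi \colon [0,1] \times PM \to PM$ may legitimately be treated as a compact fiber bundle for integration purposes (it is a trivial product bundle, which helps), and that the generalized Stokes identity extends verbatim when the base is modeled on a Fréchet space. These are standard but should be checked explicitly. Once the chain homotopy is in place, Proposition \ref{prop:AlgChainHomotopy} furnishes the induced isomorphism $H^\bullet_{cur}\bigl(\Omega_{\wt{\nabla}}(PM,\E_0)\bigr) \iso H^\bullet_{cur}\bigl(\Omega_{\nabla}(M,\E)\bigr)$ needed in the sequel.
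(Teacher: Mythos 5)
Your proof is correct and follows essentially the same route as the paper's: verify that both maps are $\NGA$-morphisms via Proposition \ref{prop:BundleDGAMaps}, note that $ev_0 \circ i = id_M$ gives $i^* \circ ev_0^* = id$, and build the chain homotopy by fiber integration over $[0,1]$ of a retraction of $PM$ onto constant paths, invoking Proposition \ref{prop:Integration} and the generalized Stokes formula. The only departure is the specific retraction—you use the reparametrization $H(s,\gamma)(t) = \gamma(st)$, whereas the paper uses the stop-at-$s$ homotopy $F(s,\gamma)(t) = \gamma(\min(s,t))$—but both satisfy the essential identity $ev_0 \circ H = ev_0 \circ \pi$ that yields the bundle identification $H^*\E_0 \cong \pi^*\E_0$ required to apply fiber integration, and your version has the mild advantage of manifestly landing in smooth paths.
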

\begin{proof}
By Proposition \ref{prop:BundleDGAMaps}, $ev_0^*\colon \Omega_{\nabla}(M, \E) \to  \Omega_{\wt{\nabla}}(PM, \E_0)$ is a well-defined morphism of curved dgas, since $\E_0 = ev_0^* \E$.  Similarly, $ev_0 \o\, i = id_M$ implies $i^*\E_0 = i^*(ev_0^*\E) = \E$, and thus  $i^*\colon \Omega_{\wt{\nabla}}(PM, \E_0) \to \Omega_{\nabla}(M, \E)$ is a curved dga morphism.  It immediately follows that the composition $i^* \o ev_0^* = id_{\Omega_{\nabla}(M, \E)}$.

We now show that the usual argument for $PM \simeq M$ implies that the composition $ev_0^* \o\, i^*$ is chain homotopic to the identity.  Let $F\colon [0,1]\times PM \to PM$ be the standard homotopy that shrinks a path to its starting point. 
\[ F(s, \gamma)(t) =\begin{cases} \gamma(t) & t \leq s \\ \gamma(s) & s \leq t. \end{cases} 
\quad  \quad \quad
 \begin{tikzcd}
\{0\} \times PM \ar[d, hook] \ar[dr, "i \o ev"]\\
{[0,1]} \times PM \ar[r, "F"]& PM\\
\{1\} \times PM \ar[u, hook'] \ar[ur, "id_{PM}" '] 
\end{tikzcd} \]
 Proposition \ref{prop:Integration} gives the integration map $\int_{[0,1]} \colon \Omega^\bullet_{F^*\wt{\nabla}}([0,1]\times PM, F^* \E_0) \to \Omega_{\wt{\nabla}}^{\bullet-1}(PM, \E_0)$, and composing with $F^*$ gives the homotopy $h = \int_{[0,1]}F^*\colon \Omega_{\wt{\nabla}}^{\bullet}(PM, \E_0) \to \Omega_{\wt{\nabla}}^{\bullet-1}(PM, \E_0)$.  The Stokes formula \eqref{eq:BundleStokes}  implies that $h$ is a chain homotopy of curved dgas, since
\[  (-1) \wt{\nabla}\int\limits_{[0,1]} F^* \omega = \int\limits_{[0,1]}F^*(\wt{\nabla}\omega) - \int\limits_{\partial[0,1]}  F^* \omega  \]
and therefore $id^*\omega - (i \o ev)^* \omega =  h( \wt{\nabla} \omega) + \wt{\nabla}(h\omega).$
\end{proof}

\subsection{Parallel transport of bundle-valued forms}\label{subsec:ParallelTransport}  
For a fixed $a, b \in [0,1]$ we denote the parallel transport section by $\ptrans{}{a}{b} \in \Omega^0(PM, E_b \otimes E_a^\vee)$; its value at a path $\gamma \colon [0,1]\to M$ is the parallel transport along $\gamma$
\[ \ptrans{\gamma}{a}{b}\colon E_{\gamma(a)} \overset{\cong}\longrightarrow E_{\gamma(b)}.\]
We will use the notation $\ptrans{\gamma}{}{}$ when the endpoints of $\gamma$ are clear from context.  Composition gives natural identifications 
\begin{equation}\label{eq:ptComposition}  \ptrans{}{b}{c} \cdot \ptrans{}{a}{b} = \ptrans{}{a}{c}.\end{equation}  

We now proceed to calculate the covariant derivative of $\ptrans{}{a}{b}$.  The following proposition is certainly not new, but we could not find it in the literature.  When the path $\gamma$ is a closed loop, this formula appears as Proposition 2.1 of \cite{TWZ} as well as in \cite[Corollary 4.6]{SW}. Let $\bt$ denote the canonical vector field $\frac{\partial}{\partial t}$ on the interval $[0,1]$, and let $\nabla^{b\bar{a}} \= \nabla^b \otimes \nabla^{a \vee}$ be the natural connection on $E_b\otimes E_a^\vee \to PM$ induced by $\nabla^E$.

\begin{prop}\label{prop d hol}The covariant derivative of $\ptrans{}{a}{b}$ is given by the formula
\[ \nabla^{b\bar{a}} \left( \ptrans{}{a}{b} \right) = \int_{t=a}^{t=b} \ptrans{}{t}{b}\cdot \iota_{\bt}R(t) \cdot \ptrans{}{a}{t} \, dt \quad \in \quad \Omega^1(PM, E_b \otimes E_a^\vee). \]
In other words, for a vector  $X \in T(PM)_\gamma$, which is a vector field along the image of $\gamma$ in $M$, 
\[ \nabla^{b\bar{a}}_X \ptrans{\gamma}{a}{b} =  \int_{t=a}^{t=b} \ptrans{\gamma}{t}{b} \cdot R\left(\dot{\gamma}(t), X(t)\right) \cdot \ptrans{\gamma}{a}{t}\,\, dt \quad \in \quad  E_b \otimes E_a^\vee.\]
\end{prop}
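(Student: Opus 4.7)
The plan is to reduce the computation to a standard variation-of-parallel-transport argument. Fix a path $\gamma\in PM$, a tangent vector $X\in T_{\gamma}PM$ (i.e.\ a vector field along $\gamma$), and a vector $v\in E_{\gamma(a)}$. Choose a smooth variation $\Gamma:[0,1]\times(-\epsilon,\epsilon)\to M$ with $\Gamma(\cdot,0)=\gamma$ and $\partial_s\Gamma|_{s=0}=X$, and let $\gamma_s=\Gamma(\cdot,s)$. Pull back $(E,\nabla^E)$ to $\Gamma^\ast E$ over the square, so that the two commuting frames $\partial_t,\partial_s$ give two commuting covariant derivative operators on sections of $\Gamma^\ast E$. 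Since we must evaluate $\nabla^{b\bar a}_X\ptrans{\gamma}{a}{b}$ on the fixed vector $v$, it is enough to produce the value at $(t,s)=(b,0)$; by linearity this recovers the full form.

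Next I would transport $v$ in the $s$-direction along the curve $s\mapsto\Gamma(a,s)$ to obtain $V(s)\in E_{\Gamma(a,s)}$ with $V(0)=v$ and $\nabla^E_{\partial_s\Gamma(a,s)}V=0$. Define
\[ W(t,s) \= \ptrans{\gamma_s}{a}{t} V(s) \ \in \ E_{\Gamma(t,s)}. \]
By construction $W$ is parallel along each $t$-line ($\nabla^{\Gamma^\ast E}_{\partial_t}W=0$) and, at $t=a$, parallel in $s$ ($\nabla^{\Gamma^\ast E}_{\partial_s}W|_{t=a}=0$). Unpacking the definition of the induced connection on $E_b\otimes E_a^\vee$, the parallel-in-$s$ boundary condition at $t=a$ kills the second term in the Leibniz expansion, and one gets
\[ \bigl(\nabla^{b\bar a}_X\ptrans{\gamma}{a}{b}\bigr)(v) \ = \ \nabla^{\Gamma^\ast E}_{\partial_s}W\,\Big|_{(t,s)=(b,0)}. \]

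With this identification in hand, the key computation is to differentiate in $t$ and invoke the curvature as the commutator of covariant derivatives in the two independent directions:
\[ \nabla^{\Gamma^\ast E}_{\partial_t}\bigl(\nabla^{\Gamma^\ast E}_{\partial_s}W\bigr) \ = \ \nabla^{\Gamma^\ast E}_{\partial_s}\bigl(\nabla^{\Gamma^\ast E}_{\partial_t}W\bigr) \,+\, R(\partial_t\Gamma,\partial_s\Gamma)\,W \ = \ R(\partial_t\Gamma,\partial_s\Gamma)\,W, \]
using $\nabla^{\Gamma^\ast E}_{\partial_t}W=0$. Thus $Y(t,s)\=\nabla^{\Gamma^\ast E}_{\partial_s}W$ satisfies a first-order inhomogeneous parallel-transport equation along each $t$-line with vanishing initial condition at $t=a$. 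The unique solution is obtained by parallel propagation of the inhomogeneity:
\[ Y(t,s) \ = \ \int_{a}^{t} \ptrans{\gamma_s}{u}{t}\,R\bigl(\partial_t\Gamma(u,s),\partial_s\Gamma(u,s)\bigr)\,W(u,s)\,du. \]
Setting $s=0$, $t=b$, and using $W(u,0)=\ptrans{\gamma}{a}{u}v$ together with $\partial_t\Gamma|_{s=0}=\dot\gamma$ and $\partial_s\Gamma|_{s=0}=X$, yields exactly the claimed formula.

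The only real obstacle is the bookkeeping at step two: verifying that $\nabla^{\Gamma^\ast E}_{\partial_s}W(b,0)$ is indeed $(\nabla^{b\bar a}_X\ptrans{\gamma}{a}{b})(v)$ rather than a related quantity. Once the normalization $\nabla^a_XV=0$ is arranged and the Leibniz rule for the induced connection on $E_b\otimes E_a^\vee$ is written out, this is immediate. Everything else is either the standard curvature-as-commutator identity on the pullback bundle or the variation-of-constants solution of a linear ODE along a curve, and no routine calculations need be carried out beyond what is written above.
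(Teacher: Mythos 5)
Your proof is correct, but it takes a genuinely different route from the paper's. The paper proceeds by a Riemann-sum argument: it partitions $[a,b]$ into small subintervals, decomposes the varied loop $\eta_s$ into a concatenation of small rectangular loops, invokes the Taylor expansion $\ptrans{C(t_1,t_2)}{}{} = 1 - t_1t_2\,R(\partial_{t_1}\Gamma,\partial_{t_2}\Gamma) + O(3)$ for holonomy around small rectangles (citing Nicolaescu), and passes to the limit as the mesh goes to zero. You instead set up a smooth two-parameter variation $\Gamma(t,s)$, use the defining identity $R(\partial_t\Gamma,\partial_s\Gamma) = [\nabla_{\partial_t},\nabla_{\partial_s}]$ on the pullback bundle (the $[\partial_t,\partial_s]=0$ term vanishing), and solve the resulting linear inhomogeneous ODE for $Y = \nabla_{\partial_s}W$ along each $t$-line by variation of constants. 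Both establish the same formula; yours is shorter and avoids the appeal to the Taylor expansion of holonomy, replacing it with the curvature-as-commutator identity, which is essentially its differential form. The paper's version is more self-contained in the sense that it reduces everything to elementary Riemann sums and a single cited expansion, and it makes the geometric picture of ``tiling the strip by small loops'' explicit, which aligns with the combinatorial flavor of the zigzag construction. The one place where you correctly flag the bookkeeping burden---verifying $(\nabla^{b\bar a}_X \ptrans{\gamma}{a}{b})(v) = \nabla^{\Gamma^*E}_{\partial_s}W|_{(b,0)}$---does work out: writing the Leibniz rule for $\nabla^{b\bar a}$ on $\Hom(E_a,E_b)$ and using $\nabla^a_{\partial_s}V = 0$ kills the second term exactly as you claim, so the argument is complete.
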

\begin{proof}
There is a classical relationship between curvature and holonomy given by the following construction.  Suppose that $(E,\nabla)\to M$ is a vector bundle with connection, and we have a rectangle mapped to $M$, by which we mean some map $\Gamma: \mathbb{R}^2 \to M$ defined near $(0,0)$.  For $t_1, t_2 >0$, let $C(t_1,t_2)$ be the closed loop in $M$, with basepoint $\Gamma(0,0)=x_0 \in M$, defined by traveling counterclockwise along $\Gamma$ from $(0,0)$ to $(t_1,0)$, then to $(t_1,t_2)$, then $(0, t_2)$, and finally back to $(0,0)$.  The parallel transport along $C(t_1, t_2)$ gives an element $\ptrans{C(t_1,t_2)}{}{} \in \End(E_{x_0})$.  As $t_1, t_2 \to 0$, the curvature naturally appears as the quadratic term in the Taylor series expansion
\begin{equation}\label{eq:CurvatorTaylorFormula} \ptrans{C(t_1, t_2)}{}{} = 1 - t_1 t_2 R\left(\frac{\partial \Gamma}{\partial t_1},\frac{\partial \Gamma}{\partial t_2}\right) + O(3) . \end{equation}
See \cite[Proposition 3.3.14]{Nicolaescu} for full details in the case where $\Gamma$ is obtained from a coordinate system in $M$.  The slightly more general case described here follows by pulling back $(E,\nabla)\to M$ via $\Gamma$.  

Calculating the covariant derivative $\nabla^{b \bar{a}}(\ptrans{}{a}{b})$ involves the parallel transport induced by the connection $\nabla^b \otimes \nabla^{\bar{a}}$ on $E_b\otimes E_a^\vee \to PM$.  Consider a path in $PM$ as 1-parameter family of paths $\gamma_s$.  Let $\phi_a$ and $\phi_b$ denote the paths in $M$ obtained by restricting $\gamma_s(t)$ to the endpoints $t=a,b$.  Parallel transporting backwards along $\gamma_s$, 
\[ \Hom(E_{\gamma_s(a)}, E_{\gamma_s(b)}) =  E_{\gamma_s(b)}\otimes E_{\gamma_s(a)}^\vee \xrightarrow{\iso} E_{\gamma_0(b)}\otimes E_{\gamma_0(a)}^\vee = \Hom(E_{\gamma_0(a)}, E_{\gamma_0(b)}), \]
is simply pre-composition by $\ptrans{\gamma_s(a)}{}{}$ and post-composition by $\ptrans{\gamma_s(b)}{}{}^{-1}$.  For $\ptrans{\gamma}{a}{b}$, it's value at $\gamma_s$ translates back to 
\[ \ptrans{\phi_b}{s}{0} \cdot \ptrans{\gamma_s}{a}{b} \cdot \ptrans{\phi_a}{0}{s} \quad \in \quad  E_{\gamma(b)} \otimes E_{\gamma(a)}^\vee. \]
We let $\eta_s$ denote the path given by the concatenation of $\phi_a * \gamma_s * {\phi_b}^{-1}$, for a given parameter $s\geq 0$.  If $X \in TPM_\gamma$ is the vector field along $\gamma$ generated by $\gamma_s$, then $\nabla_X^{b \bar{a}} \ptrans{\gamma}{a}{b}$ is simply the derivative of the parallel transport of $\eta_s$ at $s=0$.  

We now decompose the path $\eta_s$ into a concatenated sequence of smaller paths.  This is illustrated in the following diagram, where $\eta_s$ is the path traveling the upper and outer two edges of the rectangle.
\begin{equation*}
\begin{tikzpicture}
\draw (1.2,0.2) to  coordinate[midway] (b)node{} (1.2,1.8) to
     coordinate[midway] (c)node{} (2.8,1.8) to
       coordinate[midway] (d)node{} (2.8,0.4) to 
        coordinate[midway] (e)node{} (1.4,0.4)  to  (1.4,0.2) to 
         coordinate[midway] (f)node{} (3.2,0.2) to 
          coordinate[midway] (g)node{} (3.2,1.8) to
           coordinate[midway] (h)node{} (4.8,1.8) to
             coordinate[midway] (i)node{} (4.8,0.4) to
               coordinate[midway] (j)node{} (3.4,0.4) to  (3.4,0.2) to
                 coordinate[midway] (k)node{} (5.0,0.2);
\draw (9.0,0.2) to (9.2, 0.2) to  coordinate[midway] (l)node{}  (9.2,1.8) to
  coordinate[midway] (m)node{} (10.8,1.8) to  coordinate[midway] (n)node{} (10.8,0.4) to
     coordinate[midway] (o)node{} (9.4,0.4) to (9.4,0.2) to 
       coordinate[midway] (p)node{} (10.8,0.2); 
\draw[dotted](1.2,2) to coordinate[midway](r)node{} (10.8,2);
\node[above] at (r) {$\gamma_s$};
\foreach \x in {d, i, n} {\node[rotate=-90] at (\x) {\midarrow};}
\foreach \x in {b, g, l} {\node[rotate=90] at (\x) {\midarrow};}
\foreach \x in {c, h, m, f, k, p, r} {\node at (\x) {\midarrow};}
\foreach \x in {e, j, o} {\node[rotate=180] at (\x) {\midarrow};}
\fill (1.2, .2)circle (2pt);
\fill (10.8,.2)circle (2pt);
\fill (6.75,1)circle (1pt);
\fill (7,1)circle (1pt);
\fill (7.25,1)circle (1pt);
\fill (1.2,2)circle(1pt);
\fill (10.8,2)circle(1pt);
\draw[thick,<-, ->] (0,-1) to (0,3);
\draw[thick, <-, ->] (-1,0) to (12,0);
\node[above left] at (0,3) {$s$};
\node[above right] at (12,0) {$t$};
\draw ($(1.2,0)+(0,2pt)$) -- ($(1.2,0)-(0,2pt)$);
\node[below] at (1.2,0) {$a=t_0$};
\draw ($(3,0)+(0,2pt)$) -- ($(3,0)-(0,2pt)$);
\node[below] at (3,0) {$t_1$};
\draw ($(5,0)+(0,2pt)$) -- ($(5,0)-(0,2pt)$);
\node[below] at (5,0) {$t_2$};
\draw ($(9,0)+(0,2pt)$) -- ($(9,0)-(0,2pt)$);
\node[below] at (9,0) {$t_n$};
\draw ($(10.8,0)+(0,2pt)$) -- ($(10.8,0)-(0,2pt)$);
\node[below] at (10.8,0) {$t_{n+1}=b$};
\node at (2,1) {$\eta_{s, t_0}$};
\node at (4,1) {$\eta_{s, t_1}$};
\node at (10,1) {$\eta_{s, t_n}$};
\node at (.9,1) {$\phi_a$};
\node at (11.2,1) {$\phi_b^{-1}$};
\end{tikzpicture} 
\end{equation*}
Partition the interval $[a,b]$ into smaller sub-intervals, 
\[ a=t_0 < t_1 < \cdots < t_n < t_{n+1}=b, \quad \Delta t = \Delta t_i = t_{i+1} - t_i.\]
(It is unimportant whether one uses constant $\Delta t$ or a varying mesh, so we will keep the notation as simple as possible.)  Let $\eta_{s, t_i}$ be the closed loop starting at $\gamma(t_i)$, going up in the $X(t_i)$ direction, along $\gamma_s$ from $t_i$ to $t_{i+1}$, back down along $X(t_{i+1})$, and then back along $\gamma$.  Then, using the composition identities \eqref{eq:ptComposition}, 
\begin{align*} \ptrans{\eta_s}{}{} &= \ptrans{\gamma}{t_n}{b} \cdot \ptrans{\eta_{s, t_n}}{}{} \cdot \ptrans{\gamma}{t_{n-1}}{t_n} \cdot  \ptrans{\eta_{s, t_{n-1}}}{}{} \cdots \ptrans{\gamma}{t_1}{t_2} \cdot \ptrans{\eta_{s, t_1}}{}{} \cdot \ptrans{\gamma}{a}{t_1} \cdot  \ptrans{\eta_{s,t_0}}{}{} \\
&=  \prod_{i=0}^n \ptrans{\gamma}{t_i}{t_{i+1}} \cdot \ptrans{\eta_{s, t_i}}{}{}.  \end{align*}

We can now use the Leibniz rule to calculate the derivative, and remember that the factors $\ptrans{\gamma}{t_i}{t_{i+1}}$ are constant with respect to $s$.  The parallel transport $\ptrans{\eta_{0,t_i}}{}{}=1$ since it is given by transporting back and forth along $\gamma$.  Also, note the curves $\eta_{s, t_i}$ are clockwise, hence the $\pm$ sign reversal in the following calculation.  We now proceed to calculate the desired derivative.
\begin{align*}
\nabla_X \ptrans{\gamma}{a}{b} &= \restr{\frac{d}{ds}}{s=0} \ptrans{\eta_s}{}{} =\restr{\frac{d}{ds}}{s=0} \left( \prod_{i=0}^n \ptrans{\gamma}{t_i}{t_{i+1}} \cdot \ptrans{\eta_{s, t_i}}{}{}  \right)  \\
&= \sum_{i=0}^n  \ptrans{\gamma}{t_n}{b} \cdot \ptrans{\eta_{0, t_n}}{}{} \cdot \ptrans{\gamma}{t_{n-1}}{t_n} \cdot \ptrans{\eta_{0, t_{n-1}}}{}{} \cdots \restr{\frac{d}{ds}}{s=0} \Big( \ptrans{\eta_{s,t_i}}{}{}  \Big) \cdots  \ptrans{\gamma}{a}{t_1} \cdot  \ptrans{\eta_{0,t_0}}{}{} \\
&= \sum_{i=0}^n  \ptrans{\gamma}{t_n}{b} \cdot 1 \cdot \ptrans{\gamma}{t_{n-1}}{t_n} \cdot 1  \cdots \restr{\frac{d}{ds}}{s=0} \Big( \ptrans{\eta_{s,t_i}}{}{}  \Big) \cdots  \ptrans{\gamma}{a}{t_1} \cdot  1 \\
&= \sum_{i=0}^n  \ptrans{\gamma}{t_i}{b} \cdot \restr{\frac{d}{ds}}{s=0} \Big( \frac{1}{\Delta t} \ptrans{\eta_{s,t_i}}{}{}  \Big) \cdot \ptrans{\gamma}{a}{t_i}  \,\, \Delta t \\
 &= \sum_{i=0}^n  \ptrans{\gamma}{t_i}{b} \cdot\restr{\frac{d}{ds}}{s=0} \left( \frac{1}{\Delta t}(1 + s \,\Delta t R(\dot \gamma, X) + \cdots) \right) \cdot \ptrans{\gamma}{a}{t_i} \,\, \Delta t  \\
&= \sum_{i=0}^n \ptrans{\gamma}{t_i}{b} \cdot  \Big( R\big(\dot \gamma(t_i), X(t_i)\big) + O(\Delta t) \Big) \cdot \ptrans{\gamma}{a}{t_i}  \, \, \Delta t 
\end{align*}
This calculation holds for all mesh, and taking the limit as $\Delta t \to 0$ gives the desired result
\[ \nabla_X \ptrans{\gamma}{a}{b} = \int_a^b \ptrans{\gamma}{t}{b} \cdot R\left(\dot \gamma(t), X(t)\right) \cdot \ptrans{\gamma}{a}{t}  \, \, dt .\]
Note this integral is defined since, for a given $\gamma$ and $X$, the $\ptrans{\gamma}{t}{b} \cdot R\left(\dot \gamma(t), X(t)\right) \cdot \ptrans{\gamma}{a}{t}$ are all elements of the same vector space $E_{\gamma(b)}\otimes E_{\gamma(a)}^{\vee} = \Hom(E_{\gamma(a)}, E_{\gamma(b)})$.
\end{proof}

\subsection{Defining the curved iterated integral}\label{subsec: define It}

First let us recall the usual iterated integral for the interval Hochschild complex, i.e. the two-sided bar complex.  Let $\underline{\omega }_n \in CH^I(\Omega(M))$ and parametrize the geometric $n$-simplex by 
\[ \Delta^n \= 
\{ (t_0,  \ldots, t_{n+1}) \in \mathbb{R}^{n+2} \ \vert \ 0 = t_0 \leq t_1 \leq \cdots \leq t_n \leq t_{n+1}=1\}.\]
We have an evaluation map 
\[ \Delta^n \times PM \xrightarrow{ev} M^{n+2} \]
given by $ev\big( (t_i)_{i=0}^{n+1},  \gamma \big) = \big(\gamma(t_i) {\big)}_{i=0}^{n+1}$, and we use the composition
 \begin{equation} \label{EQ: tensor forms map real}
 \Omega(M)^{\otimes n+2} \xhookrightarrow{} \Omega(M^{n+2}) \xrightarrow{ev^*} \Omega(\Delta^n \times PM) \xrightarrow{\int_{\Delta^n}} \Omega(PM),  \end{equation}
linearly extended, to define $It(\underline{\omega}) \= \int_{\Delta^n} ev^*(\underline{\omega})$.  Note that this map lowers the total degree of differential forms by $n$, so the induced map $CH^I(\Omega(M)) \xrightarrow{It} \Omega(PM)$ preserves degree by virtue of the degree shift in the definition of the 2-sided bar complex.  We will use a similar evaluation map to define the iterated integral map out of $ZZ(\A)$.  The construction places differential forms along a zigzag, where each column corresponds to a time-slot.  See the figure on page \pageref{general zigzag} for a visual guide to the below evaluation map.

\begin{definition}\label{DEF: ev_zz}
Given a manifold $M$ and a choice of $k$ and $n$, we define the \underline{zigzag evaluation map} 
\[ \Delta^n \times PM \xrightarrow{ev_{k,n}} M^{nk + k + 1}\]
by its components $ev_{(\ell, m)}$, where $(\ell, m) \in \left(  \{1, \ldots, k\} \times \{1, \ldots, n+1\}  \right)\cup \{ (0,0)\}$.  For each choice of $\underline{t}\= (t_0=0, t_1, \ldots, t_n, t_{n+1}=1) \in \Delta^n$ and $\gamma \in PM$, we define those components in cases: 
\begin{itemize}
\item (Initial point) When $m=\ell =0$, define $ev_{(0,0)}(\underline{t}, \gamma)\= \gamma(0)$.
\item (Zigs) When $\ell$ is odd, define $ev_{(\ell, m)}(\underline{t}, \gamma)\= \gamma(t_m).$  
\item (Zags) When $\ell>0$ is even, define $ev_{(\ell, m)}(\underline{t}, \gamma)\= \gamma(t_{n+1 - m}).$ 
\end{itemize}
\end{definition}

Next we provide the analog of the map above, $\Omega(M)^{\otimes n+2} \xhookrightarrow{} \Omega(M^{n+2})$, in the context of endomorphism bundle valued forms. Recall that if we have  bundles with connection $\left(\E_i, \nabla_i\right) \to M_i$ indexed over a finite set, there is a natural map of forms
\begin{equation}\label{eq:imap}
\bigotimes\limits_i \Omega(M_i , \E_i) \xhookrightarrow{\otimes_i \pi_i^*} \Omega \left( \prod_i M_i , \bigotimes\limits_i \pi_i^* \E_i \right),
\end{equation}
where $\bigotimes\limits_i \pi_i^* \E_i \to \prod\limits_i M_i $ is the tensor-product bundle with induced connection $\otimes_i \pi_i^*\nabla_i$.  At a point $\underline{x} = (x_i)_i \in \prod_i M_i$, the image of an element under \eqref{eq:imap} is given by the natural vector space isomorphisms
\[ \Big( \bigwedge T( \Pi_i M_i)^*  \otimes \bigotimes_i \pi_i^* \E_i \Big)_{\underline{x}} \iso \Big( \bigwedge \bigoplus_i TM^*_{i, x_i} \Big) \otimes \bigotimes_i \pi_i^* \E_{i,x_i}   \iso \bigotimes_i \left(\bigwedge TM_{i, x_i}^* \otimes \E_{i, x_i} \right). \]
Note  that we don't have to worry about $\pm$ signs when swapping the ordering of $\E_i$ with $\bigwedge TM_j^*$ because elements of $\E_i$ have degree 0.  Also note that this inclusion encodes the product on $\Omega(M,\E)$.  In particular, if $d: M \to M \times M$ is the diagonal map and $m:\E \otimes \E \to \E$ is the product structure on $\E$, viewed as a morphism of bundles over $M$, then the composition 
 \begin{equation}\label{eq:wedgeproduct} \Omega(M,\E) \otimes \Omega(M, \E) \xhookrightarrow{\pi_1^*\otimes \pi_2^*} \Omega(M\times M, \pi_1^*\E \otimes \pi_2^* \E) \xrightarrow{d^*} \Omega(M, \E \otimes \E) \xrightarrow{m} \Omega(M,\E)  \end{equation}
is the algebra structure on the curved dga $\Omega(M,\E)$.  When $\E=\mathbb{R}$ is the trivial bundle, this is composition is simply the usual wedge product.

Our desired evaluation-pullback map is given by composing the inclusion \eqref{eq:imap} with our zigzag evaluation map from Definition \ref{DEF: ev_zz}.  We obtain
\begin{equation}\label{EQ:bundformstensor}
EV_{k, n}^*: \Omega(M, \E)^{\otimes (nk+k+1)} \xhookrightarrow{\otimes_i \pi_i^*} \Omega\Big(M^{(nk+k+1)},\otimes_i \pi_i^* \E \Big) \xrightarrow{ev_{k,n}^*} \Omega\Big(\Delta^n \times PM, EV_{k,n}^*\E  \Big),
\end{equation}
and we use the abbreviation $EV_{k,n}^* \= ev_{k,n}^* \o \otimes_i \pi_i^*$ for simplicity.

When working with bundle-valued forms $\Omega(M, \E)$, we must be careful about multiplying forms that have been evaluated at different time-values, as is happening in \eqref{EQ:bundformstensor}. At a point $(\underline{t}, \gamma) \in \Delta^n \times PM$, the fiber of $EV_{k,n}^* \E$ is the tensor product of fibers
\begin{align*} \left(EV_{k,n}^* \E \right)_{(\underline{t}, \gamma)} = \E_{\gamma(0)} 
& \otimes \E_{\gamma(t_1)}\otimes \E_{\gamma(t_2)} \otimes \dots \otimes \E_{\gamma(t_n)}   \otimes \E_{\gamma(1)}\\
& \otimes \E_{\gamma(t_n)}  \otimes \E_{\gamma(t_{n-1})}  \otimes \dots\otimes \E_{\gamma(t_1)}  \otimes \E_{\gamma(0)}\\
&\otimes \dots  \otimes \E_{\gamma(0)}.
\end{align*}
Though each fiber $\E_x$ has an algebra structure, different fibers are distinct algebras, so there is no way a priori to multiply the tensor factors in $EV_{k,n}^*\E$.  Fortunately we have a connection; the induced parallel transport along $\gamma$ provides natural isomorphisms between these different fibers.  The connection $\nabla$ on $\E$ is induced by a connection $\nabla^E$ on $E$, and the parallel transport identifications $\E_0 \iso \E_t$ from $\nabla$ are obtained by conjugating with the parallel transport $\ptrans{}{0}{t}$ described in Subsection \ref{subsec:ParallelTransport}.  We denote the resulting isomorphisms by $Ad_P$, as indicated in the following diagram.
\begin{equation}\label{EQ: Ad_P_s}
  \begin{tikzcd}[row sep=-.5em, column sep=small]
 (Ad_P)_t\colon &
\Omega(PM, \E_t) \ar[r] &  \Omega\Big(PM, (E_0\otimes E_t^\vee) \otimes (E_t\otimes E_t^\vee) \otimes (E_t \otimes E_0^\vee)\Big) \ar[r]& \Omega(PM, \E_0) \\
& \omega \ar[rr, mapsto] && \ptrans{}{t}{0} \cdot \omega \cdot \ptrans{}{0}{t}
\end{tikzcd} \end{equation}
Applying the parallel transport to each factor and then taking the product (using the algebra structure on $\E$)  gives a map of bundle-valued forms that we denote $(Ad_P)_*: EV_{k,n}^*\E \to \E_0$.  When combined with \eqref{EQ:bundformstensor}, we obtain
\begin{equation}\label{EQ: Ad_P_*}
\Omega(M, \E)^{\otimes (nk+k+1)} \xrightarrow{EV_{k,n}^*} \Omega\left(\Delta^n \times PM, EV_{k,n}^*\E \right)   \xrightarrow{(Ad_P)_*}
\Omega\left(\Delta^n \times PM, \E_0 \right).
\end{equation} 
Note that when we write $\E_0$ as a bundle over $\Delta^n \times PM$, we really mean the pullback of $\E_0\to PM$ along the projection map $\Delta^n\times PM \to PM$.  (We believe that not introducing a separate projection symbol here is the option least likely to cause  confusion.)   By Proposition \ref{prop:Integration}, the usual integration over fibers map extends to one with $\E_0$ coefficients,
\[ \int\limits_{\Delta^n} \colon \Omega^{\bullet}(\Delta^n \times PM, \E_0) \longrightarrow \Omega^{\bullet-n}(PM, \E_0).\]
Finally, our curved zigzag Chen iterated integral map is obtained by composing these maps.

\begin{definition}\label{zigzag Chen}Let $\A = \Omega_{\nabla}(M,\E)$, where $(E,\nabla^E)\to M$ is a vector bundle with connection.  
The \underline{curved zigzag Chen map}, $It: ZZ\left( \Omega_{\nabla}(M,\E) \right) \to \Omega_{\wt{\nabla}}(PM, \E_0)$, is defined on each monomial  $\underline{\omega }_{(k,n)} \in (\A \otimes(\A^{\otimes n}\otimes \A)^{\otimes k})[n]$ by the composition
 \[\Omega^{\bullet}(M, \E)^{\otimes nk + k+1} \xrightarrow{(Ad_P)_* \circ EV_{k,n}^*} \Omega^{\bullet}(\Delta^n \times PM, \E_0) \xrightarrow{\int_{\Delta^n}} \Omega^{\bullet-n}(PM, \E_0),\]
and then extended linearly to all of $ZZ(\A)$. 
 \end{definition}
 
Note that this  integral is degree-preserving because of the grading convention for $ZZ(\A)$ in Definition \ref{def CH^ZZ}: a monomial in the domain $\Omega^{\bullet}(M, \E)^{\otimes nk + k+1}$ has it's degree shifted down by $n$.  We will now proceed to further unpack this definition.  Definition \ref{zigzag Chen} defines the curved Chen map by 
 \begin{equation}\label{EQ IT^A sh}
It( \underline{\omega}) =  \int\limits_{\Delta^n} \left((Ad_P)_* \circ EV_{k,n}^*\right)(\underline{\omega}_{k,n}) .
 \end{equation}
When we use the abbreviation $\omega(t) = ev_t^*\omega$ to denote the form $\omega$ being evaluated at time $t$ on $PM$, equation \eqref{EQ IT^A sh} can be rewritten
\begin{align*}
It( \underline{\omega})=\int\limits_{\Delta^n} &\omega_{(0,0)}(t_0) \cdot \left((Ad_P)_{t_1}\omega_{(1,1)}(t_1) \right) \cdot   \left((Ad_P)_{t_2} \omega_{(1,2)}(t_2)\right)    \cdots \left( (Ad_P)_{t_{n+1}} \omega_{(1,n+1)}(t_{n+1}) \right) \\
&\cdot \left( (Ad_P)_{t_n}  \omega_{(2,1)}(t_{n}) \right)  \cdots  \left( (Ad_P)_{t_1}  \omega_{(2,n)}(t_{1}) \right) \cdot \omega_{(2, n+1)} (t_0) \cdots  \omega_{(k,n+1)}(t_0).
\end{align*}
As a reminder, the endpoint time values $t_0=0$ and $t_{n+1}=1$ cannot vary. Each of the forms is conjugated by parallel transport between some time value $t$ and $t=0$, and much of this conjugation cancels.  For example, 
\begin{align*}  \left((Ad_P)_{t_1}\omega_1 (t_1) \right) \cdot  \left(  (Ad_P)_{t_2} \omega_2(t_2) \right) &=  \left( \ptrans{}{t_1}{0} \cdot \omega_1(t_1) \cdot \ptrans{}{0}{t_1} \right) \cdot \left( \ptrans{}{t_2}{0} \cdot \omega_2(t_2) \cdot  \ptrans{}{0}{t_2} \right) \\
&=  \ptrans{}{t_1}{0} \cdot \omega_1(t_1) \cdot   \ptrans{}{t_2}{t_1} \cdot  \omega_2(t_2)   \cdot \ptrans{}{0}{t_2}.  \end{align*}
Therefore, the curved iterated integral formula can be rewritten 
\begin{align}
It( \underline{\omega})=  \int\limits_{\Delta^n} &\omega_{(0,0)}(0) \cdot \ptrans{}{t_1}{0} \cdot \omega_{(1,1)}(t_1) \cdot  \ptrans{}{t_2}{t_1} \cdot \omega_{(1,2)}(t_2)  \cdot \ptrans{}{t_3}{t_2} \cdots  \ptrans{}{1}{t_n}\cdot \omega_{(1,n+1)}(1) \label{EQ IT^A}\\
&\cdot  \ptrans{}{t_n}{1} \cdot \omega_{(2,1)}(t_{n}) \cdot \ptrans{}{t_{n-1}}{t_n}   \cdots \ptrans{}{t_1}{t_2} \cdot \omega_{(2,n)} (t_1)  \cdot \ptrans{}{0}{t_1} \cdot \omega_{(2, n+1)} (0) \cdots  \ptrans{}{0}{t_1} \cdot \omega_{(k,n+1)}(0).\nonumber
\end{align}

Proving that $It$ is compatible with the respective differentials requires a few intermediate propositions, but the overall idea is as follows.  We use the Stokes formula \eqref{eq:BundleStokes} to take the covariant derivative of \eqref{EQ IT^A}.  Up to a $\pm$ sign, this is given by the following two terms
\[ \wt{\nabla} \left( It(\underline{\omega}) \right) = \int\limits_{\Delta^n} \nabla \Big( \omega_{(0,0)}(t_0) \cdot \ptrans{}{t_1}{0} \cdots  \omega_{(k,n+1)}(0) \Big)  \quad - \quad \int\limits_{\partial \Delta^n}  \omega_{(0,0)}(t_0) \cdot \ptrans{}{t_1}{0} \cdots\omega_{(k,n+1)}(0).  \]
The second term, the boundary term, is given by composing $It$ with the $b_z$ component of the zigzag differential $D_z$.  The first term decomposes into a large sum via the Leibniz rule.  The $\nabla_z$ component of $D_z$ accounts for when $\nabla$ is applied to the $\omega$ factors, and the $c_z$ component accounts for the terms with $\nabla( \ptrans{}{t_\ell}{t_{\ell\pm 1}})$. We now begin the process of proving the curved iterated integral is a morphism in $\NGA$.

\begin{prop}The curved iterated integral $It$, as defined on monomials in \eqref{EQ IT^A sh}, is well-defined on the quotient $ZZ(\Omega(M,\E))$.
\begin{proof}Using the abbreviation $\A = \Omega(M,\E)$, we defined the iterated integral for monomials $\underline{\omega}_{k,n} \in (\A \otimes(\A^{\otimes n}\otimes \A)^{\otimes k})[n]$.  However, $ZZ(\A)$ was defined as a quotient in Definition \ref{def CH^ZZ}.  We quickly show that monomials related by the equivalences $\sim_{a}$ and $\sim_b$ (see page \pageref{page zigzag relations}) have the same image under $It$.  

The relation $\sim_a$ relates a monomial $\underline{\omega}$ to the monomial $ins_{\ell}\left(\underline{\omega}\right)$, which is obtained by inserting an entire zigzag of units $1$ (identity-valued $0$-forms) at some level $\ell$.  It easily follows that $((Ad_P)_* \o EV^*)(\underline{\omega}) = ((Ad_P)_* \o EV^*)(ins_{\ell}\left(\underline{\omega}\right))$.  More explicitly, the portion of the integrands that differ are merely a composition of parallel transports and units, and the composition of these parallel transports is the identity:
\begin{align*}
It(ins_{\ell}\left(\underline{\omega}\right)) &= \int\limits_{\Delta^n} \cdots \omega_{(\ell-1, n)}(t_1)\cdot \ptrans{}{0}{t_1} \cdot 1 \cdot \ptrans{}{t_1}{0} \cdot 1 \cdot \ptrans{}{t_2}{t_1} \cdot 1 \cdots 1\cdot \ptrans{}{0}{t_1} \cdot \omega{(\ell-1,n+1)}(0) \cdots  \\
&= \int\limits_{\Delta^n} \cdots \omega_{(\ell-1, n)}(t_1)\cdot \ptrans{}{0}{t_1} \cdot \omega_{(\ell-1,n+1)}(0) \cdots  = It(\underline{\omega}).
\end{align*}

Similarly, the relation $\sim_b$ relates the zigzag $\underline{\omega}$ to $\underline{\omega'}$ by commuting forms across units, but not changing columns. Since the columns are unchanged, the relevant forms are still evaluated at the same time values and multiplied in the same order.  If one writes the two relevant integrands as we did for the case of $\sim_a$, one easily sees that the portion where they could potentially differ is a sequence of parallel transports that composes to the identity.
\end{proof}
\end{prop}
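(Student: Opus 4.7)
The plan is to verify that $It$ descends to the quotient $ZZ(\Omega(M,\E))$ by checking invariance under each of the two generating relations $\sim_a$ and $\sim_b$ described on page \pageref{page zigzag relations}. Since $It$ is defined linearly on representative monomials via the integrand in \eqref{EQ IT^A}, it suffices to show that whenever $\underline{\omega}$ and $\underline{\omega}'$ differ by a single elementary move of either type, the corresponding integrands agree pointwise on $\Delta^n \times PM$. The main tool in both cases is the parallel transport concatenation identity \eqref{eq:ptComposition}.

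First I would handle $\sim_a$: $\underline{\omega}' = ins_\ell(\underline{\omega})$ for some even level $\ell$. The inserted zig-and-zag contributes tensor factors all equal to $1$, placed at the time slots $0,t_1,\ldots,t_n,1$ along the new zig and then $t_n,\ldots,t_1,0$ along the new zag. In \eqref{EQ IT^A} these units are interleaved with parallel transport factors $\ptrans{}{0}{t_1},\ptrans{}{t_1}{t_2},\ldots,\ptrans{}{t_n}{1}$ going up the zig, followed by $\ptrans{}{1}{t_n},\ldots,\ptrans{}{t_1}{0}$ coming back down the zag. Because nothing but units sits between consecutive transports, repeated application of \eqref{eq:ptComposition} telescopes the whole block to $\ptrans{}{0}{1}\cdot \ptrans{}{1}{0}=\ptrans{}{0}{0}=1$, so the inserted strip has no effect on the integrand and $It(\underline{\omega})=It(ins_\ell(\underline{\omega}))$.

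For $\sim_b$, the elementary move slides a nontrivial factor across an adjacent unit but does not change the column, hence does not change the time value at which any form is evaluated. The only difference between the two integrands is that a product of the form $\omega\cdot 1$ in one row is replaced by $1\cdot \omega$ in an adjacent row at the same time; these are equal in the fiber algebra, and all neighboring parallel transport factors are unchanged, so the two integrands coincide.

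The main bookkeeping challenge is the $\sim_a$ case: one must trace the complete chain of parallel transports along the newly inserted zig and reversing zag, including the outermost transports that glue the strip into the rest of $\underline{\omega}$, in order to see that the entire block collapses. Once this identification is laid out, both relations reduce to immediate applications of \eqref{eq:ptComposition} and the well-definedness follows.
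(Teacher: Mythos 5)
Your proof is correct and takes essentially the same approach as the paper: checking invariance under the two generating relations $\sim_a$ and $\sim_b$, with parallel-transport telescoping via \eqref{eq:ptComposition} as the key mechanism in each case. The only minor imprecision is in the $\sim_b$ discussion, where the precise point is that the chain of transports separating the two positions (same column, adjacent row) composes to the identity, rather than that the transport factors are literally ``unchanged''; but this does not affect the validity of the argument.
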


An important part of showing that the curved zizgzag Chen map of Definition \ref{zigzag Chen} commutes with the differentials is to prove that the boundary term in our (generalized) Stoke's formula from \eqref{eq:BundleStokes} is produced by the $b_z$ operator in the zigzag algebra.  This holds for essentially the same reasons as in Chen's original iterated integral, only we must be a little more careful about where various items live.

\begin{prop}\label{Prop:b and d commute}For $\underline{\omega} = \underline{\omega}_{k,n}$,
\[ It\left( b_z (\underline{\omega}) \right) =  (-1)^{n-1} \int\limits_{\partial \Delta^n} \left((Ad_P)_* \circ EV_{k,n}^*\right)(\underline{\omega}) . \]
\begin{proof}
Writing the operator as $b_z = \sum_{\ell=0}^n b_{\ell}$, we will show that each component $b_{\ell}$ corresponds (up to a $\pm$ sign) to the $\ell$-th face map $d_\ell$.  More precisely, let $d_{\ell}: \Delta^{n-1} \to \Delta^n$ be the face map on the geometric simplex given by $(t_1, \ldots , t_{\ell}, \ldots t_{n-1}) \mapsto (t_1, \ldots , t_{\ell}, t_{\ell}, \ldots t_{n-1})$.  In a slight abuse of notation, let $d_{\ell}$ also denote the induced inclusion $d_{\ell}: M^{(n-1)k + k + 1} \to M^{nk+k+1}$ that makes the following diagram commute.
\begin{equation}\label{eq:comm diag for dl}
\begin{gathered}
\xymatrix{\Delta^{n-1} \times PM \ar[d]_{d_{\ell} \times id} \ar[r]^{ev_{k, n-1}} & M^{(n-1)k + k+1 } \ar[d]^{d_{\ell}} \\
 \Delta^n \times PM  \ar[r]^{ev_{k,n}} & M^{nk+k+1}   }
 \end{gathered}\end{equation}
Essentially, the right-vertical map $d_{\ell}$ behaves as a product of identity and diagonal inclusion maps on $M$, where the diagonal inclusions occur in time slots $t_{\ell}$.  

When the bundle $\otimes_{i=1}^{nk+k+1}\E \to M^{nk+k+1}$ is pulled back to $M^{(n-1)k+k+1}$ via $d_{\ell}^*$ it will again be a large tensor product, where some factors come from pulling back the tensor product bundle $\E \otimes\E \to M$.  This is easiest to describe after pulling back along $ev_{k,n-1}$, where the tensor product factors appear in time slots $t_{\ell}$:
\begin{align} \left(ev_{k,n-1}^* d_{\ell}^* (\otimes_i \pi_i^* \E) \right)_{(\underline{t}, \gamma)} = \E_{\gamma(0)} 
& \otimes \E_{\gamma(t_1)}\otimes \cdots \otimes ( \E\otimes \E)_{\gamma(t_{\ell})} \otimes \dots \otimes \E_{\gamma(t_{n-1})}   \otimes \E_{\gamma(1)} \nonumber \\
& \otimes \E_{\gamma(t_{n-1})}  \otimes \cdots \otimes (\E \otimes \E)_{\gamma(t_{\ell})}  \otimes \dots\otimes \E_{\gamma(t_1)}  \otimes \E_{\gamma(0)} \label{eq:domain for dl}\\
&\otimes \dots  \otimes \E_{\gamma(0)} .\nonumber
\end{align}
Letting $m: \E \otimes \E \to \E$ denote the map of bundles over $M$ that encodes the algebra structure on $\E$,  define $m_{\ell}: d_{\ell}^*(\bigotimes_{i=1}^{nk + k + 1} \pi_i^* \E) \to \bigotimes_{i=1}^{(n-1)k + k + 1} \pi_i^* \E$ to be the bundle map over $M^{(n-1)k + k + 1}$ given by the multiplication $m$ on each of the tensor factors  in \eqref{eq:domain for dl} that is itself a tensor product.  As described in \eqref{eq:wedgeproduct}, the composition $m \o d^* \o (\pi_1^*\otimes \pi_2^*)$ is simply the product $\Omega(M,\E) \otimes \Omega(M,\E) \to \Omega(M,\E)$.  Therefore, the composition $m_\ell \o d_\ell^* \o (\otimes_i \pi_i^* \E)$ is equivalent to the product on the forms in $\Omega(M,\E)^{nk+k+1}$ on the ``time $t_\ell$ columns.'' 

To state this more precisely, the left-hand square in the following diagram commutes.
 \begin{equation}\label{eq:multi-diagonal}
\begin{tikzcd}
{\Omega (M, \E)^{\otimes nk +k + 1}} \arrow[r, hook] \arrow[d, "(-1)^{n+\ell}b_\ell"] & {\Omega(M^{ nk +k + 1}, \otimes_i\pi_i^* \E)} \arrow[r, "{(Ad_P)_* \circ ev_{k,n}^*}"] \arrow[d, "m_\ell \o d_\ell^*"] &[5em]  {\Omega(\Delta^n \times PM, \E_0)} \arrow[d, "(d_\ell \times id)^*"] \\
{\Omega(M, \E)^{\otimes (n-1)k +k + 1}} \arrow[r, hook] & {\Omega(M^{ (n-1)k +k + 1}, \otimes_i \pi_i^* \E)} \arrow[r, "{(Ad_P)_* \circ ev_{k,n-1}^*}"] &{\Omega(\Delta^{n-1} \times PM, \E_0)}
\end{tikzcd}
\end{equation}
The factor $(-1)^{n+\ell}$ is due to its appearance in the definition of $b_\ell$  in \eqref{EQ b_z}.  The right-hand square of \eqref{eq:multi-diagonal} also commutes; the commutative diagram \eqref{eq:multi-diagonal} implies this for $\Omega(M)$, and the result extends to $\Omega(M,\E)$ after using $(Ad_P)_*$ to take the total product on all factors of $\E$.  In summary, our commutative diagram \eqref{eq:multi-diagonal} may be expressed by the equation
\begin{equation} \Big( (Ad_P)_* \o EV_{k,n-1}^* \Big) \big(b_\ell (\underline{\omega})\big) =(-1)^{n+\ell} \Big(  (d_\ell \times id_{PM})^* \o (Ad_P)_* \o EV^*_{k,n} \Big)(\underline{\omega}). \end{equation}

We may now proceed to the calculation of our curved iterated integral when it is restricted to the boundary of an $n$-simplex.  Note that the boundary $\partial \Delta^{n}$ is spanned by the image of the face maps $d_{\ell}: \Delta^{n-1} \hookrightarrow \partial \Delta^n$.  This embedding is orientation-preserving when $\ell$ is odd, so we pick up a factor of $(-1)^{\ell +1}$ in the following calculation. 
\begin{align*}
 (-1)^{n-1} \int\limits_{\partial \Delta^n} &((Ad_P)_* \circ EV_{k,n}^*)(\underline{\omega})  =   (-1)^{n-1}  \sum\limits_\ell (-1)^{\ell+1} \int\limits_{\Delta^{n-1}} (d_\ell \times id)^*((Ad_P)_* \circ EV_{k,n}^*)(\underline{\omega})    \\
 = & \sum\limits_\ell   \int\limits_{\Delta^{n-1}} ((Ad_P)_* \circ EV_{k, n-1}^*)(b_\ell (\underline{\omega}) ) 
 =  \int\limits_{\Delta^{n-1}} \big((Ad_P)_* \circ EV_{k,n-1}^*)(b_z (\underline{\omega}) \big) = It\left( b_z (\underline{\omega}) \right).\end{align*}
\end{proof}\end{prop}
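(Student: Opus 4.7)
The plan is to decompose the zigzag boundary $b_z = \sum_{\ell=0}^{n} b_\ell$ and show that each component $b_\ell$ corresponds exactly to the integral over the $\ell$-th codimension-one face of $\Delta^n$; the identity then follows by summing over faces with the usual orientation signs for $\partial \Delta^n$.

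First I would introduce face maps $d_\ell \colon \Delta^{n-1} \hookrightarrow \Delta^n$ for $\ell = 0,\ldots,n$ which insert a repeated time value (yielding $t_\ell = t_{\ell+1}$), together with a companion ``multi-diagonal'' inclusion on $M^{nk+k+1}$, also denoted $d_\ell$, which duplicates the entries sitting in column $\ell$ on every zig and every zag simultaneously. These fit into a square
\[
\begin{tikzcd}
\Delta^{n-1}\times PM \arrow[r,"ev_{k,n-1}"] \arrow[d,"d_\ell \times id"'] & M^{(n-1)k+k+1} \arrow[d,"d_\ell"] \\
\Delta^n \times PM \arrow[r,"ev_{k,n}"] & M^{nk+k+1}
\end{tikzcd}
\]
so that restricting an $\ell$-th face via $(d_\ell \times id)^*$ factors through an algebraic ``column merge'' on $M^{nk+k+1}$.

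Second, I would verify the core compatibility: combining $d_\ell^*$ with the bundle multiplication $m \colon \E\otimes\E\to\E$ applied to each duplicated column factor recovers, on tensor powers of $\Omega(M,\E)$, precisely the operator $b_\ell$ of \eqref{EQ b_z}, up to the sign $(-1)^{n+\ell}$ absorbed into the definition. The key observation on the $PM$-side is that the parallel transport portion of $(Ad_P)_*$ degenerates correctly when columns collide: along the $\ell$-th face, $\ptrans{}{t_\ell}{t_{\ell+1}} = 1$, so adjacent forms sitting in the same zig (respectively zag) multiply as $\omega_{(i,\ell)} \cdot \omega_{(i,\ell+1)}$, matching the merging pattern of $b_\ell$. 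Packaged as a commutative diagram, this is the statement
\[
\bigl((Ad_P)_* \circ EV^*_{k,n-1}\bigr)\bigl(b_\ell(\underline{\omega})\bigr) \;=\; (-1)^{n+\ell}\,(d_\ell \times id)^* \bigl((Ad_P)_* \circ EV^*_{k,n}\bigr)(\underline{\omega}).
\]

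Third, I would assemble the global identity. Writing $\partial \Delta^n$ as the union of the images of the $d_\ell$ with induced orientation signs $(-1)^{\ell+1}$, I split the boundary integral, substitute the identity above, and collect: each face contributes the sign $(-1)^{n-1}\cdot(-1)^{\ell+1}\cdot(-1)^{n+\ell} = 1$, so the sum reassembles to $\sum_\ell It(b_\ell(\underline{\omega})) = It(b_z(\underline{\omega}))$, which is exactly the claim.

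The main obstacle will be the endpoint cases $\ell=0$ and $\ell=n$, which behave asymmetrically in the definition of $b_z$: they involve three-fold products at the bends of the zigzag (where a zig meets a zag at $t=0$ or $t=1$). Here the corresponding face forces $t_1=0$ (respectively $t_n=1$), so not only does a column collapse, but several parallel transports become trivial in different ways than in the interior, and one must carefully check that the resulting triple products together with the leftover $Ad_P$-conjugations reproduce the prescribed forms of $b_0$ and $b_n$ from Definition \ref{def CH^ZZ}, including the role of the normalization relations $\sim_{a,b}$. Once this endpoint bookkeeping is in place, the uniform argument outlined above handles $0 < \ell < n$ without incident.
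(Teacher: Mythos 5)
Your proposal is correct and takes essentially the same route as the paper: you decompose $b_z = \sum_\ell b_\ell$, set up the same face-map square relating $ev_{k,n-1}$ to $ev_{k,n}$, establish the identity $((Ad_P)_* \circ EV^*_{k,n-1})(b_\ell(\underline{\omega})) = (-1)^{n+\ell}(d_\ell \times id)^*((Ad_P)_* \circ EV^*_{k,n})(\underline{\omega})$ via the bundle multiplication and the collapse $\ptrans{}{t_\ell}{t_{\ell+1}}=1$ on the face, and then collect the signs $(-1)^{n-1}\cdot(-1)^{\ell+1}\cdot(-1)^{n+\ell}=1$. Your added caution about the endpoint cases $\ell=0,n$ and the normalization relations is well taken (the paper glosses over this), but it is consistent with, not a departure from, the argument used in the paper.
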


Now that we've established that the $b_z$ operator produces the boundary term in $\wt{\nabla}(It(\underline{\omega}))$, we show that the $c_z$ operator produces the terms inside $\int_{\Delta}\nabla ( (Ad_P)_* \o EV_{k,n}^*)(\underline{\omega})$ that arise from covariant derivatives of  parallel transports.
 
\begin{prop}\label{prop:R terms in It}Under the curved iterated integral $It$, the $c_{j,\ell}$ component of the $c_z$ operator gives the covariant derivative of the parallel transport between $t_{\ell-1}$ and $t_\ell$ on the $j$th zig/zag, with the precise formula stated in \eqref{eq:It of c}.
\begin{proof}
We perform the calculation for $c_{1, \ell}$ and note that the calculations for other $c_{j, \ell}$ follow in the exact same manner.  As a reminder to the reader, up to a $\pm$ sign the operator $c_{1, \ell}$ inserts a new column in between the original $(\ell-1)$ and $\ell$ columns, where $1 \leq \ell \leq n + 1$.  In this new column, the curvature $R$ is placed on the first zig, and the identity $1$ is placed on all other zigs/zags.  We will temporarily suppress the $\pm$ sign by using the following description. 
\begin{align*}
c_{1,\ell}( \underline{\omega}_{k,n}) = 
\pm \;   \omega_{(0,0)}&\otimes \cdots \otimes \omega_{(1,\ell-1)} \otimes R \otimes \omega_{(1,\ell)} \otimes \cdots \otimes \omega_{(1,n)} \otimes \omega_{(1,n+1)}  \otimes\\
   &\otimes \omega_{(2,1)} \otimes \ldots \otimes \omega_{(2, n-\ell+1)} \otimes 1 \otimes \omega_{(2, n-\ell+2)} \otimes \cdots \otimes \omega_{(2,n+1)} \otimes \cdots  
 \end{align*} 

We now proceed to our calculation.  
\begin{align*}
It&(c_{1,\ell}( \underline{\omega}_{k,n}))  \\
=&\pm  \int\limits_{\Delta^{n+1}}  \omega_{(0,0)}(t_0) \cdot \ptrans{}{t_1}{0} \cdots \omega_{(1,\ell-1)}(t_{\ell-1}) \cdot \ptrans{}{t_{\ell}}{t_{\ell-1}}\cdot R(t_{\ell}) \cdot \ptrans{}{t_{\ell+1}}{t_{\ell}}  \cdot \omega_{(1,\ell)}(t_{\ell+1})  \cdots \ptrans{}{1}{t_{n+1}}\cdot \omega_{(1,n+1)}(t_{n+2}) \\
&  \quad \quad \quad \cdot  \ptrans{}{t_{n+1}}{1} \cdot \omega_{(2,1)}(t_{n+1})   \cdots \omega_{(2,n-\ell+1)}(t_{\ell+1}) \cdot   \ptrans{}{t_{\ell}}{t_{\ell+1}} \cdot 1(t_{\ell}) \cdot \ptrans{}{t_{\ell-1}}{t_{\ell}} \cdot \omega_{(2,n-\ell+2)} (t_{\ell-1})  \cdots \\
=  &  \pm \int\limits_{\Delta^{n}} \omega_{(0,0)}(t_0) \cdot \ptrans{}{t_1}{0} \cdots \omega_{(1,\ell-1)}(t_{\ell-1}) \cdot   \left(  \int_{t=t_{\ell-1}}^{t=t_{\ell}} \ptrans{}{t}{t_{\ell-1}}\cdot \iota_{\bt} R(t) \cdot \ptrans{}{t_{\ell}}{t} dt \right)  \cdot \omega_{(1,\ell)}(t_{\ell})  \cdots \ptrans{}{1}{t_{n}}\cdot \omega_{(1,n+1)}(t_{n+1}) \\
& \quad \quad \quad  \cdot  \ptrans{}{t_{n}}{1} \cdot \omega_{(2,1)}(t_{n})   \cdots \omega_{(2,n-\ell+1)}(t_{\ell}) \cdot    \ptrans{}{t_{\ell-1}}{t_{\ell}}   \cdot \omega_{(2,n-\ell+2)} (t_{\ell-1})  \cdots \\
= & \mp  \int\limits_{\Delta^n}  \omega_{(0,0)}(t_0)\cdots  \omega_{(1,\ell-1)}(t_{\ell-1}) \cdot  \nabla \left( \ptrans{}{t_{\ell}}{t_{\ell-1}}\right)   \cdot \omega_{(1,\ell)}(t_{\ell}) \cdots  \omega_{(k,n+1)}(t_0) 
\end{align*}
The first equality follows directly by writing the iterated integral as we previously did in \eqref{EQ IT^A}.  To see the second equality, first note that $\ptrans{}{t_{\ell}}{t_{\ell-1}}\cdot R(t_{\ell}) \cdot \ptrans{}{t_{\ell+1}}{t_{\ell}}$ is the only part that is not constant with respect to $t_{\ell}$.  Therefore, it is the only place where a $dt_{\ell}$ can occur, making it easy to integrate out the original $t_{\ell}$ variable.  In doing this, we relabel our $t_i$ variables, so that the new $t_i$ are the prior $t_{i+1}$ for $i\geq \ell$.  Finally, we use the calculation of $\nabla(\ptrans{}{t_\ell}{t_{\ell -1}})$ from Proposition \ref{prop d hol} to obtain the third equality.  A factor of $(-1)$  appears in this step because we must swap the integration bounds $t=t_{\ell-1}$ and $t=t_\ell$.  

One can calculate $c_{j,\ell}(\underline{\omega})$ by the exact same method.  The only difference is that the $(-1)$ factor appearing in the last step only appears when $j$ is odd, because we do not need to swap the bounds to calculate the desired $\nabla(\ptrans{}{t_{\ell -1}}{t_\ell})$ appearing when $j$ is even.

Finally, if we include the correct $\pm$ signs used in defining $c_z$, we see that
\begin{equation}\label{eq:It of c} It\left( c_{j, \ell}(\underline{\omega}) \right) = (-1)^{n+\epsilon(\underline{\omega}, j, \ell)} \begin{cases}   \int\limits_{\Delta^n} \cdots \omega_{(j, \ell-1)}(t_{\ell-1}) \cdot \nabla\left(\ptrans{}{t_\ell}{t_{\ell-1}} \right) \cdot \omega_{(j, \ell)}(t_\ell) \cdots & (j \text{ odd}) \\
 \int\limits_{\Delta^n} \cdots \omega_{(j, n-\ell+1)}(t_{\ell}) \cdot \nabla\left(\ptrans{}{t_{\ell-1}}{t_{\ell}} \right) \cdot \omega_{(j, n- \ell+2)}(t_{\ell-1}) \cdots & (j \text{ even}),
\end{cases}  \end{equation}
where $\epsilon(\underline{\omega}, j, \ell)$ is simply the sum of the degrees of all forms appearing to the left of $\nabla P$ in the iterated integral.  
\end{proof}
\end{prop}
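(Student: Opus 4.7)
The plan is to compute $It(c_{j,\ell}(\underline{\omega}))$ directly using the expanded iterated integral formula \eqref{EQ IT^A}, then isolate the single time-variable that the inserted curvature depends on, and finally recognize the resulting inner integral as the covariant derivative of a parallel transport via Proposition \ref{prop d hol}. I would begin with the case $j=1$, which is representative. Applying $c_{1,\ell}$ inserts a new column between the original columns $\ell-1$ and $\ell$, placing $R$ on the top zig and $1$ on every other zig/zag. After relabeling the simplex coordinates so the new column's time variable is called $t_\ell$, the only factors in the full integrand of \eqref{EQ IT^A} that involve $t_\ell$ are the three adjacent symbols $\ptrans{}{t_\ell}{t_{\ell-1}}\cdot R(t_\ell) \cdot \ptrans{}{t_{\ell+1}}{t_\ell}$ on the zig row, together with the pair of parallel transports $\ptrans{}{t_\ell}{t_{\ell+1}}\cdot \ptrans{}{t_{\ell-1}}{t_\ell}$ on each zag row, which telescope to $\ptrans{}{t_{\ell-1}}{t_{\ell+1}}$ after integration.

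Next I would perform the $t_\ell$ integration. Since $R$ is a $2$-form, its pullback under $ev_{k,n+1}^*$ contributes a $dt_\ell$ through $\iota_\bt R(t_\ell)$, while everything else above is $dt_\ell$-free; fiber-integrating out $dt_\ell$ collapses this to
\[
\int_{t_{\ell-1}}^{t_{\ell+1}} \ptrans{}{t}{t_{\ell-1}}\cdot \iota_\bt R(t)\cdot \ptrans{}{t_{\ell+1}}{t}\, dt.
\]
After relabeling (so the simplex is now $\Delta^n$ rather than $\Delta^{n+1}$), this is exactly the integrand from Proposition \ref{prop d hol} that equals $\nabla\!\left(\ptrans{}{t_\ell}{t_{\ell-1}}\right)$ up to swapping the orientation of the integration bounds, which produces a single sign.

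For general zig rows ($j$ odd) the argument is identical, since the columns on zigs are traversed left-to-right just as on the first zig; for zag rows ($j$ even) the column order is reversed, so the parallel transport produced is $\ptrans{}{t_{\ell-1}}{t_\ell}$ and the bound-swap sign does not appear. Consolidating everything, the exponent $\epsilon(\underline{\omega},j,\ell)$ in \eqref{eq:It of c} records the degrees of all forms to the left of the inserted $R$ that must be commuted past, and the $(-1)^n$ comes from the shift convention in Definition \ref{def CH^ZZ}; these signs match exactly the prefactor built into the definition of $c_{j,\ell}$ in \eqref{EQ c_z} together with the bound-swap sign above.

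The only real subtlety I expect is the sign book-keeping: between the intrinsic sign $(-1)^{n+j+\epsilon(\underline{x},j,\ell)}$ attached to $c_{j,\ell}$, the parity difference between zigs and zags (captured by $p'_{j,\ell}$), the $(-1)$ coming from orienting $\int_{t_{\ell-1}}^{t_\ell}$ versus $\int_{t_\ell}^{t_{\ell-1}}$, and the degree shift on $ZZ(\A)$, it takes some care to see that the net sign is precisely $(-1)^{n+\epsilon(\underline{\omega},j,\ell)}$ as stated in \eqref{eq:It of c}. Every other step is a direct calculation: reading off the iterated integral, performing a one-variable integration, and quoting Proposition \ref{prop d hol}.
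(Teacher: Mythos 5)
Your proposal follows essentially the same route as the paper: expand $It(c_{1,\ell}(\underline{\omega}))$ via \eqref{EQ IT^A}, isolate the $t_\ell$-dependence (with the helpful extra observation that the zag-row transports telescope away), integrate out $t_\ell$, and invoke Proposition \ref{prop d hol} to recognize $\nabla(P|_{t_{\ell-1}}^{t_\ell})$, with the bound-swap giving the sign discrepancy between zigs and zags. The sign bookkeeping you flag as the remaining subtlety is handled in the paper at exactly the same level of informality, so this matches.
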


We now have all the pieces necessary to prove that our curved iterated integral is compatible with the respective differentials.  

\begin{prop}\label{prop zz chain map}
The Iterated Integral $It: ZZ(\Omega_\nabla(M,\E)) \to \Omega_{\wt{\nabla}}(PM, \E_0)$ satisfies $It \circ D_z = \widetilde{\nabla} \circ It$.
\begin{proof}
Recall that the curved differential graded algebras $ZZ(\Omega_{\nabla}(M,\E))$ and  $\Omega_{\wt{\nabla}}(PM,\E_0)$ have differentials $D_z\= (\nabla_z + b_z + c_z) $ (Definition \ref{def CH^ZZ}) and $\widetilde{\nabla} \= ev_0^*\nabla$ (Example \ref{EX: Omega(PM)}), respectively.  We wish to show that, for an arbitrary monomial $\underline{\omega}=\underline{\omega}_{k,n}$, we have $\widetilde{\nabla}\left( It( \underline{\omega}) \right) =  It( D_z(\underline{\omega}))$.  In the following, the Stokes formula \eqref{eq:BundleStokes} gives us first equality, and Proposition \ref{Prop:b and d commute} allows us to rewrite the boundary term using the $b_z$ differential.
\begin{equation} \label{EQ: zz chain 1}\begin{aligned}
\widetilde{\nabla}\big( It( \underline{\omega}) \big) &=   (-1)^n \int\limits_{\Delta^n} \widetilde{\nabla} \Big( (Ad_P)_* \circ EV_{k,n}^*)(\underline{\omega}) \Big)+ (-1)^{n-1} \int\limits_{\partial \Delta^n} ((Ad_P)_* \circ EV_{k,n}^*)(\underline{\omega}) \\
&= (-1)^n \int\limits_{\Delta^n} \widetilde{\nabla} \Big( (Ad_P)_* \circ EV_{k,n}^*)(\underline{\omega}) \Big) \quad +   \quad \int\limits_{\Delta^{n-1}} ((Ad_P)_* \circ EV_{k,n-1}^*)(b_z(\underline{\omega})) 
\end{aligned}\end{equation}

We now proceed to calculate the first term $\int_{\Delta^n} \widetilde{\nabla} ((Ad_P)_* \circ EV_{k,n}^*)(\underline{\omega})$ of \eqref{EQ: zz chain 1}.  Using \eqref{EQ IT^A}, this can be rewritten explicitly as
\begin{align*}
(-1)^n \int\limits_{\Delta^n} \wt{\nabla}\bigg(&\omega_{(0,0)}(0) \cdot \ptrans{}{t_1}{0} \cdot \omega_{(1,1)}(t_1) \cdot  \ptrans{}{t_2}{t_1} \cdot \omega_{(1,2)}(t_2)  \cdot \ptrans{}{t_3}{t_2} \cdots  \ptrans{}{1}{t_n}\cdot \omega_{(1,n+1)}(1) \\
&\cdot  \ptrans{}{t_n}{1} \cdot \omega_{(2,1)}(t_{n}) \cdot \ptrans{}{t_{n-1}}{t_n}   \cdots \ptrans{}{t_1}{t_2} \cdot \omega_{(2,n)} (t_1)  \cdot \ptrans{}{0}{t_1} \cdot \omega_{(2, n+1)} (0) \cdots  \ptrans{}{0}{t_1} \cdot \omega_{(k,n+1)}(0) \bigg). 
\end{align*}
This will become a sum of terms, since the operator $\wt{\nabla}$ is a derivation, where $\nabla$ is applied either to some $\omega_{(j, \ell)}$ or some $\ptrans{}{t_\ell}{t_{\ell \pm 1}}$.  The terms involving $\nabla \omega_{(j, \ell)}$ clearly sum to  $\int_{\Delta^n} ( (Ad_P)_* \circ EV_{k,n}^*) \left(\nabla_z (\underline{\omega}) \right)$.  Finally, Proposition \ref{prop d hol} allows us to calculate $\nabla(\ptrans{}{t_i}{t_{i\pm1}})$, and in Proposition \ref{prop:R terms in It} we showed how to rewrite this part of the integral.  The terms involving $\nabla(\ptrans{}{t_{\ell+1}}{t_{\ell}})$ in the zigs (or $\nabla(\ptrans{}{t_{\ell}}{t_{\ell+1}})$ in the zags) each equal the iterated integral of the monomial $\underline{\omega}$ with $R$ inserted as a new column in that same position.  The sum of these such terms is precisely the definition of $c_z(\omega)$.  Therefore, 
\begin{equation}\label{eq:zz chain final} \widetilde{\nabla}\big( It( \underline{\omega}) \big) =  \int\limits_{\Delta^n} \big( (Ad_P)_* \circ EV_{k,n}^* \big)\big((\nabla_z + c_z)(\underline{\omega}) \big)  +    \int\limits_{\Delta^{n-1}} ((Ad_P)_* \circ EV_{k,n-1}^*)(b_z(\underline{\omega})) = It\big( D_z(\underline{\omega}) \big).\end{equation}
Finally, note that the definition of $D_z$ included factors of $(-1)^n$, which is the reason the $\pm$ signs in  \eqref{EQ: zz chain 1} all become $+$ in \eqref{eq:zz chain final}.
\end{proof}
\end{prop}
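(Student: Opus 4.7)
The plan is to compute $\wt{\nabla}(It(\underline{\omega}))$ directly via the generalized Stokes formula \eqref{eq:BundleStokes} and then recognize the resulting pieces as the image under $It$ of each of the three components of $D_z = \nabla_z + b_z + c_z$. Writing $It(\underline{\omega}) = \int_{\Delta^n} \big((Ad_P)_* \circ EV_{k,n}^*\big)(\underline{\omega})$ and applying Proposition \ref{prop:Integration} immediately splits $\wt{\nabla}(It(\underline{\omega}))$ into a bulk term $\pm \int_{\Delta^n} \wt{\nabla}\big((Ad_P)_* \circ EV_{k,n}^*\big)(\underline{\omega})$ and a boundary term $\pm \int_{\partial \Delta^n}\big((Ad_P)_* \circ EV_{k,n}^*\big)(\underline{\omega})$. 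The boundary term is identified with $It(b_z \underline{\omega})$ by Proposition \ref{Prop:b and d commute}, so the real work is analyzing the bulk term.

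For the bulk term, I would use the unwound expression \eqref{EQ IT^A}, which displays the integrand as an alternating product of pullback forms $\omega_{(j,\ell)}(t_i)$ and parallel transports $\ptrans{}{t_i}{t_{i\pm 1}}$. Since $\wt{\nabla}$ is a derivation on $\Omega_{\wt\nabla}(PM,\E_0)$, applying it under the integral and using Leibniz produces a sum in which $\wt{\nabla}$ is applied either to one of the form factors or to one of the parallel transport factors. The first type of term reassembles into $\int_{\Delta^n}\big((Ad_P)_* \circ EV_{k,n}^*\big)(\nabla_z \underline{\omega}) = It(\nabla_z \underline{\omega})$ essentially by definition: the operator $\nabla_z$ from \eqref{EQ: nabla_z} is literally $\nabla$ extended as a derivation across the tensor factors of $\underline{\omega}$, and the pullback along $EV^*_{k,n}$ intertwines $\nabla$ on $\Omega(M,\E)$ with $\wt{\nabla}$ on $\Omega(\Delta^n\times PM,\E_0)$.

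The terms where $\wt{\nabla}$ lands on a parallel transport factor are exactly what Proposition \ref{prop d hol} computes: each $\wt{\nabla}\big(\ptrans{}{t_{\ell-1}}{t_\ell}\big)$ becomes an integral over an interior time parameter of $\ptrans{}{t}{t_\ell}\cdot \iota_\bt R(t) \cdot \ptrans{}{t_{\ell-1}}{t}$. Substituting this back into the iterated integral promotes $\Delta^n$ to $\Delta^{n+1}$, and the inserted $R$ appears at a new column between columns $\ell-1$ and $\ell$ and on the specific zig or zag where the original parallel transport lived. This is precisely the geometric description of $c_{j,\ell}\underline{\omega}$ from \eqref{EQ c_z}, so by Proposition \ref{prop:R terms in It} (and summing over all transports) these terms collectively equal $It(c_z \underline{\omega})$.

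The hard part is sign bookkeeping. There are contributions from the grading shift $[n]$ in $ZZ(\A)$, the $(-1)^n$ and $(-1)^{n-1}$ in the generalized Stokes formula, the alternating orientation of face maps $d_\ell \colon \Delta^{n-1}\hookrightarrow \partial\Delta^n$, the swapping of parallel transport bounds on zags (which produces an extra $(-1)^j$ factor when computing $\wt\nabla(\ptrans{}{t_\ell}{t_{\ell-1}})$ instead of $\wt\nabla(\ptrans{}{t_{\ell-1}}{t_\ell})$), and Koszul signs $\beta_{(i,p)}$ and $\epsilon(\underline{\omega},j,\ell)$ counting degrees of form factors appearing to the left of the factor on which $\wt{\nabla}$ lands. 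The signs in the definitions of $\nabla_z$, $b_z$, and $c_z$ were chosen for precisely this identification, so once the three matchings are set up carefully term by term, everything collapses to $It(D_z \underline{\omega})$. The potential obstruction is purely combinatorial; there is no analytic subtlety beyond Stokes, the Leibniz rule, and the curvature-holonomy formula already established in Proposition \ref{prop d hol}.
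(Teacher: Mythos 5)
Your proposal follows essentially the same approach as the paper's proof: apply the generalized Stokes formula \eqref{eq:BundleStokes} to split $\wt{\nabla}(It(\underline\omega))$ into a boundary term (identified with $It(b_z\underline\omega)$ via Proposition \ref{Prop:b and d commute}) and a bulk term, then expand the bulk term via Leibniz using the explicit formula \eqref{EQ IT^A} so that terms where $\nabla$ hits a form factor give $It(\nabla_z\underline\omega)$ and terms where it hits a parallel transport factor give $It(c_z\underline\omega)$ via Propositions \ref{prop d hol} and \ref{prop:R terms in It}. Your treatment of the sign bookkeeping is slightly more explicit than the paper's, but the decomposition, the cited lemmas, and the overall structure all match.
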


\begin{remark}
One could consider an adjusted two-sided bar construction whose differential includes a curvature component, and use an iterated integral map, $It: CH^I(\Omega_{\nabla}(M,\E)) \to (\Omega_{\nabla}(PM,ev_0^*\E)$,  similar to the one in this paper.  By a calculation analogous to that in the proof of Proposition \ref{prop zz chain map}, this map would be a chain map largely due to the differential of $CH^I(\Omega(M))$ never requiring any forms to commute. However, as we have tried to make clear in this paper, and illustrated on page \pageref{intro calculation}, this map would not respect the product structures. Since, additionally, $CH^I(\Omega_{\nabla}(M,\E))$ does not form a curved dga, such a map would not be a map of algebras, let alone curved differential graded algebras.
\end{remark}

\begin{prop}\label{zz It algebra map}
The iterated integral $It: ZZ(\Omega_{\nabla}(M,\E)) \to \Omega_{\wt{\nabla}}(PM, \E_0)$ is a map of algebras, and hence it is a morphism in $\NGA$.
\begin{proof}
First, recall that the product of standard simplices is a union of inclusions
 \[\Delta^m \times \Delta^n = \bigcup\limits_{\sigma \in S_{m,n}} \beta^{\sigma}\left( \Delta^{m+n}\right),\]
 where the sum is over all $(m,n)$-shuffles $\sigma$ (see Remark \ref{shuffle remarks}) and $\beta^{\sigma} : \Delta^{m+n} \to \Delta^m \times \Delta^n$ is the unshuffle map \[(t_1, \ldots , t_{m+n}) \xmapsto{\beta^{\sigma}} ((t_{\sigma(1)}, \ldots , t_{\sigma(m)}), (t_{\sigma_{(m+1)}}, \ldots , t_{\sigma_{(m+n)}})).\]
 Since the intersection of these inclusions have measure zero in $\Delta^{m+n}$, then for any bundle $(E \to \Delta^m \times \Delta^n \times X)$ we have
 \begin{equation}\label{EQ:delta n delta m shuff}
  \int\limits_{\Delta^m \times \Delta^n} \alpha =  \int\limits_{\coprod\limits_{\sigma \in S_{m,n}} \beta^{\sigma} \left( \Delta^{m+n} \right)} (-1)^{\abs{\sigma}} \alpha = \sum\limits_{\sigma} (-1)^{\abs{\sigma}} \int\limits_{\Delta^{m+n}} \left( \beta^{\sigma}\right)^* (\alpha) 
  \end{equation}
for any form $\alpha \in \Omega(\Delta^m \times \Delta^n \times X, E)$, where $\abs{\sigma}$ is the parity of the shuffle.
Next, we note how these $\beta^{\sigma}$ fit into a commutative diagram with our zigzag evaluation maps, 
\begin{equation*}
\xymatrix{\Delta^{m+n} \times PM \ar[d]^{\beta^{\sigma} \times d_{PM}} \ar[rrr]^{ev_{k_m + k_n, \, m+n}} & &  & M^{L}\ar[d]^{\pi^{\sigma}} \\
\Delta^m \times PM \times \Delta^n \times PM \ar[rrr]^{(ev_{m,k_m}, \, ev_{n,k_n})} & & &M^{m'} \times M^{n' }}
\end{equation*}
where for the moment we use $m' = m k_m + k_m + 1$, $n' = n k_n + k_n + 1$, and $L = (m+n+1)(k_m + k_n)+1$.  We now apply $\Omega$ to the above diagram.  The main difference between this proof and a proof in the abelian setting (see Remark \ref{REM:It abelian algebra map}) is the extra care required when keeping track of the coefficients.  The details are similar to the details in the proof of Proposition \ref{Prop:b and d commute}, and we provide the following commutative diagram for the reader's convenience.
\begin{equation*}\label{eq:CurvedAlgebraMap}
\begin{tikzcd}
\Omega(M, \E)^{\otimes m'} \otimes \Omega(M, \E)^{\otimes n'}\arrow[d, "\left( \bigotimes \pi_i^* \right) \otimes \left( \bigotimes \pi_j^* \right)"]  \arrow[r, "\odot_{\sigma}"] &[10em]  \Omega(M, \E)^{\otimes L} \arrow[d, "\bigotimes \pi_{\ell}^*"] \\
\Omega(M^{m'}, \bigotimes\limits_i \pi_i^* \E) \otimes \Omega(M^{n'}, \bigotimes\limits_j \pi_j^*\E) \arrow[d, "ev^*_{k_m, m} \otimes ev^*_{k_n, n}"]  \arrow[r, "(\pi^{\sigma})^* \circ (\pi_{m'}^* \otimes \pi_{n'}^*)"] & \Omega(M^L, \bigotimes\limits_{\ell} \pi_{\ell}^* \E) \arrow[d, "ev^*_{k_m+k_n, m+n}"] \\
\bigotimes\limits_{m,n} \Omega(\Delta^{\bullet} \times PM, EV_{k_{\bullet}, \bullet}^* \E)  \arrow[d, "m \circ \left((Ad_P)_* \otimes (Ad_P)_*\right)"] \arrow[r, "(\beta^{\sigma} \times d_{PM})^* \circ (\pi_{\Delta^m \times PM}^* \otimes \pi_{\Delta^n \times PM}^*)"] & \Omega(\Delta^{m+n} \times PM, EV^*_{k_m +k_n, m+n}) \arrow[d, " (Ad_P)_*"] \\
\bigotimes\limits_{m,n} \Omega(\Delta^{\bullet} \times PM, \E_0)  \arrow[r, "m \circ (\beta^{\sigma} \times d_{PM})^* \circ (\pi_{\Delta^m \times PM}^* \otimes \pi_{\Delta^n \times PM}^*)"] & \Omega(\Delta^{m+n} \times PM, \E_0)\\
\end{tikzcd}
\end{equation*}
From here the computation, follows by summing over all $\sigma$ and taking the appropriate $(Ad_P)_*$ maps into consideration:
\begin{small}\begin{align*}
&It\left( \underline{x} \odot \underline{y} \right)\\ = &\int\limits_{\Delta^{m+n}} (Ad_P)_* \circ \left(EV^*_{k_m + k_n, m+n}\right) \left( \underline{x} \odot \underline{y} \right) =   \sum\limits_{\sigma}(-1)^{\abs{\sigma}+(\abs{\underline{x}} - m)n}  \int\limits_{\Delta^{m+n}}   (Ad_P)_* \circ \left(EV^*_{k_m + k_n, m+n}\right) \left( \underline{x} \odot_{\sigma} \underline{y} \right)\\
= &\sum\limits_{\sigma}  (-1)^{\abs{\sigma} + (\abs{\underline{x}} - m) n} \int\limits_{\Delta^{m+n}}  m \circ \left(\beta^{\sigma} \times d_{PM}\right)^* \circ \left(\bigotimes_{m,n} \pi^*_{\Delta^{\bullet} \times PM}\right) \circ m \circ \left(\bigotimes_{m,n}  (Ad_P)_* \right) \circ \left(\bigotimes_{m,n} EV^*_{k_{\bullet},\bullet} \right) \left( \underline{x} \otimes \underline{y} \right) \\
\intertext{and now we can apply \eqref{EQ:delta n delta m shuff} to the fiber we are integrating over,}
&=m \left( \ (-1)^{(\abs{\underline{x}} - m)\cdot n} \int\limits_{\Delta^m \times \Delta^n} \left(id \times d_{PM}\right)^* \circ \left(\bigotimes_{m,n} \pi^*_{\Delta^{\bullet} \times PM}\right) \left( (Ad_P)_* \left( EV^*_{k_m,m} (\underline{x}) \right) \otimes (Ad_P)_* \left( EV^*_{k_n,n} (\underline{y})\right) \right)  \right)\\
&= m \circ d_{PM}^* \circ \pi_1^* \otimes \pi_2^* \left( \ \int\limits_{\Delta^m}  (Ad_P)_* \left( EV^*_{k_m,m} (\underline{x}) \right)  \otimes  \int\limits_{\Delta^n} (Ad_P)_* \left( EV^*_{k_n,n} (\underline{y})\right) \right) = It(\underline{x}) \cdot It(\underline{y})
\end{align*}\end{small}
 
\noindent where again $m:\E_0 \otimes \E_0 \to \E$ and we recall from \eqref{eq:wedgeproduct} how $m \circ d^* \circ \pi_1^* \otimes \pi_2^*$ recovers the wedge product. 
\end{proof}\end{prop}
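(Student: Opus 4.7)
My plan is to exploit the classical shuffle decomposition of the product of simplices, $\Delta^m \times \Delta^n = \bigcup_{\sigma \in S_{m,n}} \beta^\sigma(\Delta^{m+n})$, where the unshuffle maps $\beta^\sigma$ have disjoint interiors and are orientation-preserving up to $(-1)^{|\sigma|}$. This is precisely the geometric reason that the shuffle product appears in Chen's original proof, and the identity
\[ \int_{\Delta^m \times \Delta^n} \alpha = \sum_{\sigma \in S_{m,n}} (-1)^{|\sigma|} \int_{\Delta^{m+n}} (\beta^\sigma)^* \alpha \]
is what will convert the Fubini-style product of two integrals into a single integral over $\Delta^{m+n}$ summed over shuffles. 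The strategy is then to erect a large commutative diagram, square by square, linking $It(\underline{x}) \cdot It(\underline{y})$ on one side to $It(\underline{x} \odot \underline{y})$ on the other, exactly along the lines of the diagram that appears in the proof.

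First I would verify the upper squares, which are purely combinatorial: the zigzag evaluation $ev_{k_m+k_n,\, m+n}$ factors through $(ev_{k_m,m}, ev_{k_n, n}) \circ (\beta^\sigma \times \mathrm{diag}_{PM})$ because, under the unshuffle $\beta^\sigma$, each new time slot $t_i$ of the combined zigzag is distributed to either the $\underline{x}$-block or the $\underline{y}$-block according to whether $\sigma^{-1}(i) \leq m$, and on the other block the corresponding entry in $\underline{x} \odot_\sigma \underline{y}$ is a unit. This matches the definition of $\odot_\sigma$ in Definition \ref{zz shuffle}, including the $\Sh_i$ convention that reverses the order of the shuffled inputs on zags.

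The main obstacle is the bottom square, where parallel transport must be tracked carefully. In $It(\underline{x}) \cdot It(\underline{y})$, each factor is its own $Ad_P$-conjugated braid of forms rooted at $\gamma(0)$; in $It(\underline{x} \odot_\sigma \underline{y})$, the $\underline{x}$-zigzags and $\underline{y}$-zigzags are concatenated into a single tall zigzag and units are interleaved in the time slots not occupied by their respective forms. The key identity is that parallel transport collapses across units via \eqref{eq:ptComposition}: a segment of the form $\ptrans{}{t_{i+1}}{t_i} \cdot 1 \cdot \ptrans{}{t_{i+2}}{t_{i+1}}$ simplifies to $\ptrans{}{t_{i+2}}{t_i}$, and iterating this across all interleaved units reconstructs exactly the two separate transport chains coming from $\underline{x}$ and $\underline{y}$. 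This is precisely the content of the normalization relation $\sim_a$ under which $ZZ(\A)$ and its product $\odot$ are well defined, so no new input is required beyond Section \ref{section zigzag} and Proposition \ref{prop d hol}'s setup.

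Finally I would match signs. The $(-1)^{|\sigma|}$ is produced by the orientation of $\beta^\sigma$; the Koszul factor $(-1)^{(|\underline{x}|-m)\cdot n}$ arises when commuting the total degree-shifted $\underline{x}$ past $\underline{y}$ in the tensor-then-diagonalize step $d_{PM}^* \circ (\pi_1^* \otimes \pi_2^*)$ appearing in \eqref{eq:wedgeproduct}. Together these reproduce the overall sign in Definition \ref{zz shuffle}, so summing over $\sigma$ and applying the simplex decomposition yields $It(\underline{x} \odot \underline{y}) = It(\underline{x}) \cdot It(\underline{y})$. Combined with Proposition \ref{prop zz chain map} and the observation that $It$ sends $\underline{R_z}$ to $ev_0^* R$, this upgrades $It$ to a morphism in $\NGA$.
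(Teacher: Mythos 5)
Your proposal is correct and follows essentially the same route as the paper: the shuffle decomposition $\Delta^m\times\Delta^n = \bigcup_\sigma \beta^\sigma(\Delta^{m+n})$, the induced commutative diagram relating $ev_{k_m+k_n,m+n}$ to $(ev_{k_m,m}, ev_{k_n,n})\circ(\beta^\sigma\times d_{PM})$, the collapse of parallel transport across interleaved units via \eqref{eq:ptComposition}, and the matching of the $(-1)^{|\sigma|}$ and Koszul signs. Your closing remark that one must also check $It(\underline{R_z}) = ev_0^*R$ to promote $It$ to a morphism in $\NGA$ is a small but legitimate addition the paper leaves implicit (it is established in the computation within Theorem \ref{thm It is NGA equivalence}).
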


\begin{remark}\label{REM:It abelian algebra map}
The proof that $It: CH^{I}(\Omega(M)) \to \Omega(PM )$ is an algebra map is a bit simpler due to the lack of tracking coefficients along with the fact that we routinely apply $\mathbb{R} \otimes \mathbb{R} \simeq \mathbb{R}$, so there is no multiplication map needed. Additionally, the proof is different since we would instead rely on the commutative diagram 
\begin{equation*}
\xymatrix{\Delta^{m+n} \times PM \ar[d]^{\beta^{\sigma} \times d} \ar[rr]^{ev_{m+n}} &&  M^{m+n+2} \ar[d]^{\rho^{\sigma}} \\
\Delta^m \times \Delta^n \times PM \ar[rr]^{(ev_m, \, ev_n)} & &M^{m+2} \times M^{n+2}  }
\end{equation*}
which in fact ``mixes'' the tensor-ordering of our forms.  For this reason, the proof of Proposition \ref{zz It algebra map} does not simply reduce to the abelian/bar-complex case.
\end{remark}

\begin{theorem}\label{thm It is NGA equivalence}
The iterated integral map $It$ sits in the following commutative diagram, each of which is a homotopy equivalence in $\NGA$.
\[ \xymatrix{
\Omega_\nabla(M,\E) \ar[rr]_{\simeq}^{ev_0^*} \ar[dr]^{\simeq}_{\eta} &&\Omega_{\wt{\nabla}}(PM, \E_0)  \\
&ZZ\big( \Omega_{\nabla}(M,\E) \big) \ar[ur]^{\simeq}_{It}
}\]
\begin{proof}
First we show the diagram commutes, i.e. that $It \circ \eta = ev_0^*$. This follows from the fact that $\eta$ sends a form  $\omega \in \Omega(M,\E)$ to the single zig-and-zag
\[\eta(\omega) = { \begin{tikzpicture}[ baseline={([yshift=-.5ex]current bounding box.center)},vertex/.style={anchor=base,circle,fill=black!25,minimum size=18pt,inner sep=2pt}]
  \draw[name path=zigzag1] (0,10) to (2,9.5) to (0,9);
   
   \draw[opacity=0,name path=t1] (0, 10) to (0,9);
   \fill [name intersections={of=t1 and zigzag1, by={a, c,blank}}]
        (a) circle (2pt)
        (c) circle (2pt);
\node[label=left:{$\omega$}] at (a) {};
\node[label=left:{$1$}] at (c) {};

\draw[opacity=0,name path=t2] (2, 10) to (2,9);
   \fill [name intersections={of=t2 and zigzag1, by={b,blank}}]
        (b) circle (2pt);
\node[label=right:{$1$}] at (b) {};
\end{tikzpicture}} =  \omega \otimes 1 \otimes 1 \in    (\A \otimes(\A^{\otimes 0}\otimes \A)^{\otimes 2})[0] / \sim_{a,b} \; \subset ZZ( \A).\]
When we apply $It$ to $\omega \otimes 1 \otimes 1$, we integrate over the fiber $\Delta^0$, so we simply evaluate.  Following \eqref{EQ IT^A}, we obtain
\begin{align*}
It\left( \omega \otimes 1 \otimes 1\right) &= \int\limits_{\Delta^0} ((Ad_P)_* \circ EV_{2,0}^*)(\omega \otimes 1 \otimes 1) = \int\limits_{\Delta^0} \omega(t_0) \cdot \ptrans{}{t_1}{t_0} \cdot 1(t_1) \cdot \ptrans{}{t_0}{t_1} \cdot 1(t_0) \\
&= \omega(0) \cdot \ptrans{}{1}{0} \cdot \ptrans{}{0}{1} = \omega(0) = ev_0^*\omega.
\end{align*}
Thus, the diagram commutes.

By Proposition \ref{algebraic quasi-iso}, the map $\eta\colon \Omega_{\nabla}(M,\E) \to ZZ(\Omega_{\nabla}(M,\E))$ is a homotopy equivalence.  By Proposition \ref{prop:PMhtpyM}, $ev_0^*\colon \Omega_{\nabla}(M,\E) \to \Omega_{\wt{\nabla}}(PM, \E_0)$ is also a homotopy equivalence.  The usual 2/3 property of homotopy equivalences, shown in Proposition \ref{prop:HomotopyEquivalences} to still hold in $\NGA$, immediately implies that $It$ must also be a homotopy equivalence.
\end{proof}
\end{theorem}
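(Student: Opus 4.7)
The plan is to reduce the theorem to an assembly of three facts that have already been established in the excerpt, plus one short direct computation.

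First I would verify that the triangle commutes. The map $\eta \colon \Omega_\nabla(M,\E) \to ZZ(\Omega_\nabla(M,\E))$ sends $\omega$ to the minimal zigzag $\omega \otimes 1 \otimes 1$ with $k=2$ and $n=0$. Since there are no interior columns, $\Delta^0$ is a point, so the formula \eqref{EQ IT^A} for the iterated integral reduces to a single evaluation with no actual integration. The only nontrivial factors are $\omega(0)$ together with $\ptrans{}{1}{0}\cdot \ptrans{}{0}{1}$, and the latter is the identity by the composition property \eqref{eq:ptComposition}. Hence $It\circ \eta (\omega) = \omega(0) = ev_0^*\omega$.

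Next I would invoke the two established homotopy equivalences in the picture. Proposition \ref{algebraic quasi-iso} provides homotopy inverses $\alpha$ and $\eta$ showing $\eta$ is a homotopy equivalence in $\NGA$, while Proposition \ref{prop:PMhtpyM} shows that $ev_0^* \colon \Omega_\nabla(M,\E) \to \Omega_{\wt\nabla}(PM,\E_0)$ is a homotopy equivalence in $\NGA$ with homotopy inverse $i^*$ (using the classical path-space contraction $F$ together with the bundle-valued Stokes formula \eqref{eq:BundleStokes}). At this point the only thing missing is that $It$ itself is a morphism in $\NGA$, but this is the content of Proposition \ref{prop zz chain map} (compatibility with the differentials $D_z$ and $\wt{\nabla}$) together with Proposition \ref{zz It algebra map} (compatibility with the shuffle product $\odot$ and wedge product).

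Finally, I would close the argument by applying the two-out-of-three principle for homotopy equivalences, which is Proposition \ref{prop:HomotopyEquivalences}(3) in the category $\NGA$. Since $It \circ \eta = ev_0^*$, and both $\eta$ and $ev_0^*$ are homotopy equivalences in $\NGA$, the remaining morphism $It$ is also a homotopy equivalence. This completes the proof.

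The main obstacle is not in this theorem itself but in the lemmas it relies upon, and those have already been discharged in the preceding sections: the chain-map property required the delicate Stokes/parallel-transport calculation of Proposition \ref{prop:R terms in It} and the boundary-face identification of Proposition \ref{Prop:b and d commute}, while the algebra-map property required the shuffle-versus-simplex-product identity \eqref{EQ:delta n delta m shuff} combined with careful tracking of the $(Ad_P)_*$ conjugations. Given those ingredients, the theorem itself is a short commutative-diagram check plus an invocation of the $2/3$ property.
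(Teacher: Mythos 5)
Your proposal follows the paper's proof essentially step for step: the direct computation of $It \circ \eta(\omega)=\omega(0)=ev_0^*\omega$ using $\Delta^0$ being a point and the parallel-transport composition identity, followed by Propositions \ref{algebraic quasi-iso} and \ref{prop:PMhtpyM} and the two-out-of-three property from Proposition \ref{prop:HomotopyEquivalences}. The one slight difference is that you explicitly flag that $It$ must be known a priori to be a morphism in $\NGA$ (citing Propositions \ref{prop zz chain map} and \ref{zz It algebra map}), which is correct but is left implicit in the paper's proof since it was established just before; otherwise the argument is identical.
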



\section{Relating the Zigzag algebra with the interval Hochschild complex}\label{section finale}
In this section we observe the compatibility between the curved story presented above and the well known flat story for commutative differential graded algebras. 
\begin{definition}\label{def zz to I}
If $\C$ is a commutative DGA, we have a column-collapse map $Col: ZZ(\C) \to CH^I(\C)$ defined as follows: Let $\underline{x}_{(k,n)} \in ZZ(\C)$.  Then 
$$Col(\underline{x}_{(k,n)} ) = (-1)^{\epsilon} Col^{\mathcal{L}}(\underline{x}_{(k,n)} ) \otimes Col^1(\underline{x}_{(k,n)} ) \otimes \ldots \otimes Col^n(\underline{x}_{(k,n)} ) \otimes Col^{\mathcal{R}}(\underline{x}_{(k,n)} )$$
where 
\begin{align*}
Col^{\mathcal{L}}(\underline{x}_{(k,n)} )\= & x_{(0,0)} \cdot x_{(2,n+1)}  \cdot x_{(4,n+1)} \dots \cdot x_{(k,n+1)}, \\
Col^p(\underline{x}_{(k,n)} )\=  &\prod\limits_{\substack{ i=1 \\ i \text{ odd}}}^{k-1} x_{(i,p)} \cdot \prod\limits_{\substack{ i=2 \\ i \text{ even}}}^{k} x_{(i,n-p+1)} \\
Col^{\mathcal{R}}(\underline{x}_{(k,n)} )\= &  x_{(1,0)} \cdot x_{(3,n+1)}  \cdot x_{(5,n+1)} \dots \cdot x_{(k-1,n+1)}. \\
\end{align*}
Here, $\prod$  refers to the ordered product induced by the algebra $\C$, and $\epsilon$ comes from the usual Koszul rule of changing the order of elements $x_{(i,p)}$.
\end{definition}

\begin{prop}\label{prop zz to I}
Let $\C$ be a commutative differential graded algebra.  Then $Col: ZZ(\C) \to CH^I(\C)$ is a morphism in $\NGA$.
\begin{proof}
It is straightforward to check that the differentials are compatible because of Leibniz and because we always collapse full rows.  Similarly, the shuffle products are compatible since the insertion of $1$'s in an entire column amounts to the usual shuffle after collapsing.
\end{proof}
\end{prop}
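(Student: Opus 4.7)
The plan is to verify the three defining conditions of an $\NGA$-morphism from Definition \ref{Def cdga morphism}, together with well-definedness on the quotient $\sim_{a,b}$ and degree preservation. Since $\mathcal{C}$ is an ordinary DGA, its curvature vanishes, so $\underline{R_z} = R \otimes 1 \otimes 1 = 0$ in $ZZ(\mathcal{C})$ and the curvature condition $Col(\underline{R_z}) = 0$ is trivial. Well-definedness is immediate: the $\sim_a$ move appends two full rows of units, contributing only $1$'s to every column product $Col^p$ and to $Col^{\mathcal{L}}, Col^{\mathcal{R}}$; the $\sim_b$ move swaps an entry with a unit in a fixed column, which leaves that column's ordered product unchanged. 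Degree preservation follows because the shift by $n$ in $ZZ(\mathcal{C})$ matches the shift by the number of interior tensor factors in $CH^I(\mathcal{C})$, which also equals $n$ after collapsing.

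For the algebra condition, I would analyze a single summand $\underline{x}_{k,n} \odot_\sigma \underline{y}_{\ell,m}$: it concatenates the two zigzags vertically, inserting $m$ columns of $1$'s on the $\underline{x}$-rows and $n$ columns of $1$'s on the $\underline{y}$-rows according to $\sigma$ and $\sigma^{\Sh_i}$. Applying $Col$ collapses each of the $n+m$ resulting columns to a single product; the inserted $1$'s drop out, so each column product is either a $Col^{p}$-factor of $\underline{x}$ or a $Col^{q}$-factor of $\underline{y}$, placed in the position dictated by $\sigma$. Commutativity of $\mathcal{C}$ allows us to reorder the $\underline{x}$-row factors past the $\underline{y}$-row factors within these column products, and the Koszul signs produced by these reorderings, combined with the shuffle signs built into Definition \ref{zz shuffle}, collapse to exactly the shuffle sign $(-1)^{|\sigma|}$ appearing in the standard shuffle product on $CH^I(\mathcal{C})$. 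Summing over all $\sigma$ then gives $Col(\underline{x} \odot \underline{y}) = Col(\underline{x}) \odot Col(\underline{y})$.

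For differential compatibility, write $D_z = \nabla_z + b_z + c_z$. The operator $c_z$ vanishes on $ZZ(\mathcal{C})$: every inserted column contains the factor $R = 0$ on some row, so each term is zero, matching the absent curvature term of the Hochschild differential on $CH^I(\mathcal{C})$. For $\nabla_z$, the image $Col \circ \nabla_z(\underline{x})$ applies $\nabla$ to a single tensor factor within one column product; summing over all tensor factors in a fixed column and invoking the Leibniz rule in $\mathcal{C}$ turns this into $d$ applied to the product $Col^p(\underline{x})$. Summing over all columns then gives the cochain differential on $CH^I(\mathcal{C})$. The components $b_\ell$ of $b_z$ multiply columns $\ell$ and $\ell+1$ of the zigzag on every row simultaneously; because $\mathcal{C}$ is commutative, collapsing the merged double-column and multiplying row-by-row commutes with taking the product of the two collapsed columns in $CH^I(\mathcal{C})$, yielding the $\ell$-th face of the Hochschild differential, and the endpoint terms $b_0, b_n$ absorb the left/right labels into the outermost tensor factors in the expected way.

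The main obstacle will be the sign bookkeeping in the algebra step, where the zig/zag-sensitive shuffles $\sigma^{\Sh_i}$, the prefactor $(-1)^{|\sigma| + (|\underline{x}|-n)m}$, and the Koszul signs incurred by commuting row-factors must exactly reproduce the classical shuffle sign on $CH^I(\mathcal{C})$; this is the one place where the graded commutativity of $\mathcal{C}$ is genuinely used rather than just convenient.
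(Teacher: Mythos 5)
Your proof follows essentially the same strategy as the paper's (which is itself only a two-sentence sketch): differential compatibility by Leibniz and row-collapse, product compatibility by observing that shuffled-in $1$'s drop out after column collapse. Your extra checks (well-definedness on $\sim_{a,b}$, degree preservation, $Col(\underline{R_z})=0$, vanishing of $c_z$ since $R=0$) are prerequisites the paper leaves implicit, and they are handled correctly. One small overstatement: you say commutativity is needed to "reorder $\underline{x}$-row factors past $\underline{y}$-row factors within these column products," but in any single interior column of $\underline{x}\odot_\sigma\underline{y}$ only one of $\underline{x}$ or $\underline{y}$ contributes nontrivially (the other's rows are all $1$'s), and the endpoint products $Col^{\mathcal{L}}$, $Col^{\mathcal{R}}$ already come out in the order $Col^{\mathcal{L}}(\underline{x})\cdot Col^{\mathcal{L}}(\underline{y})$, etc.; graded commutativity is genuinely used in the $b_z$ step and in matching the global shuffle-sign conventions rather than inside column products. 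Your closing caveat — that the sign bookkeeping between $(-1)^{|\sigma|+(|\underline{x}|-n)m}$, the $\sigma^{\Sh_i}$ convention, and the classical Koszul shuffle sign remains to be carried out — is honest and is the one place where the proof is a sketch; the paper's own proof leaves the same work implicit.
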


Note that the above collapse-map was not purely ad-hoc, as illustrated by the following lemma.
\begin{lemma}\label{Lemma column}
The collapse map $Col: ZZ(\Omega(M, \mathbb{R})) \to CH^I(\Omega(M, \mathbb{R}))$ from Definition \ref{def zz to I} is given, level-wise for each choice of $n \in \mathbb{N}$, by $zz^*$ where $zz:M^{n+2} \to M^{nk+k+1}$ is a particular diagonal map.  In other words, we have the commutative diagram: 
\begin{equation*}
\begin{tikzcd}
ZZ(\Omega(M))_{k,n} \ar[d, "Col"] \ar[r, hookrightarrow]& \Omega(M^{nk+k+1}) \ar[d, "zz^*"] \\
CH^I(\Omega(M))_n \ar[r, hookrightarrow]& \Omega(M^{n+2})
\end{tikzcd}
\end{equation*}
\begin{proof}
Similar to the proof of Proposition \ref{Prop:b and d commute}, this fact is another consequence of \eqref{eq:domain for dl}.
\end{proof}
\end{lemma}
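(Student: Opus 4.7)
The plan is to exhibit the required diagonal map $zz \colon M^{n+2}\to M^{nk+k+1}$ explicitly and then observe that $Col = zz^*$ is forced by pulling back wedge products through a diagonal. Index the factors of $M^{nk+k+1}$ by the same position set used for zigzag monomials, namely $\{(0,0)\}\cup\{(i,p): 1\le i\le k,\ 1\le p\le n+1\}$, and index the factors of $M^{n+2}$ by $\{0,1,\ldots,n+1\}$. Define
\[
  zz_{k,n}(y_0,y_1,\ldots,y_n,y_{n+1}) \;=\; (z_{(i,p)}),\quad z_{(0,0)}=y_0,
\]
with $z_{(i,p)} = y_p$ when $i$ is odd, and $z_{(i,p)} = y_{n+1-p}$ when $i>0$ is even. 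Thus $zz_{k,n}$ is the diagonal map that sends each interval-time coordinate $y_q$ to all zigzag slots that sit in the $q$-th column.

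The first step is the compatibility with evaluation. Comparing Definition~\ref{DEF: ev_zz} with the classical evaluation map $ev_n\colon \Delta^n\times PM \to M^{n+2}$, $(\underline t,\gamma)\mapsto (\gamma(t_0),\ldots,\gamma(t_{n+1}))$, one checks slot-by-slot that $ev_{k,n}= zz_{k,n}\circ ev_n$. On zigs this is tautological ($\gamma(t_p)$ appears on the right of both sides for $i$ odd), and on zags it matches because $z_{(i,p)} = y_{n+1-p}$ was designed precisely so that reading off time-labels from $ev_n$ and then through $zz_{k,n}$ reproduces $\gamma(t_{n+1-p})$.

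The second step is the computation of $zz_{k,n}^*$. Under the inclusions $\Omega(M)^{\otimes N}\hookrightarrow \Omega(M^N)$ given by $\omega_1\otimes\cdots\otimes\omega_N\mapsto \pi_1^*\omega_1\wedge\cdots\wedge\pi_N^*\omega_N$, pulling back along any coordinate-repeating map $f\colon M^A\to M^B$ simply regroups the tensor factors according to which coordinate they become pulled back from, and wedges them together on that coordinate. This is exactly the content already invoked in \eqref{eq:wedgeproduct} and used in the proof of Proposition~\ref{Prop:b and d commute}. Applying this to $f=zz_{k,n}$: all factors at positions $(0,0)$ and $(i,n+1)$ for even $i$ land on coordinate $0$ of $M^{n+2}$, all factors at positions $(i,n+1)$ for odd $i$ land on coordinate $n+1$, and for $1\le p\le n$ the factors at $(i,p)$ with $i$ odd and at $(i,n-p+1)$ with $i$ even land on coordinate $p$. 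This is precisely the partition encoded by $Col^{\mathcal{L}}$, $Col^{\mathcal{R}}$, and $Col^p$ in Definition~\ref{def zz to I}.

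The final step is tracking the Koszul signs. In passing from the tensor ordering of the zigzag monomial $\underline{x}_{k,n}$ (initial point, then row $1$ from column $1$ to column $n+1$, then row $2$, etc.) to the column grouping produced by $zz_{k,n}^*$, one must commute the odd-degree forms $x_{(i,p)}$ past each other; the resulting sign is by definition the $\epsilon$ in Definition~\ref{def zz to I}. Since $\C$ is commutative, once two forms share a column the wedge they contribute is unambiguous, so this is the only source of signs, and it agrees. The expected main nuisance is purely bookkeeping: verifying that the particular ordering convention inside each $\prod$ in Definition~\ref{def zz to I} matches the order in which factors are encountered when reading off $zz_{k,n}^*$ of the ordered wedge. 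Once that is done, commutativity of the square in the lemma follows immediately from the two inclusion-and-pullback chains agreeing.
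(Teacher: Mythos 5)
Your proposal is correct and takes essentially the same route as the paper: explicitly write down the diagonal map $zz_{k,n}$ that sends the time coordinate $y_q$ to every zigzag slot in column $q$, observe that pulling back along a coordinate-repeating map regroups and wedges tensor factors (the same fact used in the proof of Proposition~\ref{Prop:b and d commute} around~\eqref{eq:domain for dl}), and check that the resulting column grouping and Koszul sign reproduce Definition~\ref{def zz to I}. The paper's published proof is a single sentence pointing to~\eqref{eq:domain for dl}; your version just fills in the details. The one small difference is that you also verify $ev_{k,n} = zz_{k,n}\circ ev_n$, which is not part of this lemma (it is the content of the diagram in the proof of Proposition~\ref{prop Col It commute equiv}); this is harmless extra motivation but not logically needed here.
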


\begin{prop}\label{prop Col It commute equiv}
If $\A= \Omega(M, \mathbb{R})$, we have the following commutative diagram of homotopy equivalences in $\NGA$.
\begin{equation}\label{Diagram column integral}
\begin{gathered}
\xymatrix{ZZ(\Omega(M)) \ar[d]^{Col} \ar[r]^{It} & \Omega(PM)\\
CH^I(\Omega(M)) \ar[ru]_{It} &  }
\end{gathered}\end{equation}
\begin{proof}
We observe the following commutative diagram for each choice of $n \in \mathbb{N}$.
\begin{equation}\label{Diagram column ev}\begin{gathered}
\xymatrix{M^{nk + k+1} & \Delta^n \times PM   \ar[l]_{ev_{k,n}} \ar[ld]^{ev_n} \\
M^{n+2} \ar[u]^{zz} &  }
\end{gathered}\end{equation}
When combined with the lemma above, we see that the diagram of \eqref{Diagram column integral} commutes, level-wise with respect to $n$, due to glueing together the pull-back diagram from \eqref{Diagram column ev} with the diagram from Lemma \ref{Lemma column}.
 \begin{equation*}
\xymatrix{ZZ(\Omega(M))_n \ar[d]_{Col} \ar[r] & \Omega(M^{nk+k+1})  \ar[d]^{zz^*} \ar[r]^{ev_{k,n}^*} &\Omega(\Delta^n \times PM) \ar[r]^{\int_{\Delta^n}} & \Omega(PM)\\
CH^I(\Omega(M))_n \ar[r]& \Omega(M^{n+2}) \ar[ru]_{ev_n^*} & & }
\end{equation*}
To see that the resulting triangular diagram is comprised of homotopy equivalences, we note that the map $It: CH^I(\Omega(M)) \to \Omega(PM)$ is well-known to be a homotopy equivalence in $\DGA$ (implicitly this fact is proved by \cite{C}, but a similar result is explicitly given in \cite[Lemma 3.3]{GJP}), while the map $It: ZZ(\Omega(M)) \to \Omega(PM)$ was proven to be a homotopy equivalence in Theorem \ref{thm It is NGA equivalence}. Since $\DGA  \hookrightarrow \NGA$ is a full-subcategory and the definition of homotopy equivalence is such that this inclusion preserves such equivalences, then we also have that $It$ is a homotopy equivalence in $\NGA$. By the two-out-of-three property, we conclude that the column-collapse map $ZZ(\Omega(M)) \xrightarrow{Col} CH^I(\Omega(M))$ is a homotopy equivalence.
\end{proof}
\end{prop}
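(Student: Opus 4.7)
The plan is to establish the triangle first at the level of commutativity and then promote it to a triangle of homotopy equivalences by invoking the previously established results, using the two-out-of-three property.

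First, I will verify commutativity level-wise, for each fixed $n$. The key observation is that both $It$ maps factor through the respective evaluation pullbacks followed by fiber integration over the same simplex $\Delta^n$. More precisely, the zigzag iterated integral factors as $\int_{\Delta^n} \circ\, ev_{k,n}^*$ on each bidegree $(k,n)$, while the classical Chen integral factors as $\int_{\Delta^n} \circ\, ev_n^*$. The commutative diagram from \eqref{Diagram column ev}, namely $ev_{k,n} = zz \circ ev_n$ where $zz:M^{n+2}\to M^{nk+k+1}$ is the appropriate diagonal, ensures $ev_{k,n}^* = ev_n^* \circ zz^*$. Since Lemma \ref{Lemma column} identifies $Col$ with $zz^*$ level-wise on the inclusions of tensor-powers into forms on $M^{\bullet}$, we obtain
\[ It \circ Col = \int_{\Delta^n} \circ\, ev_n^* \circ zz^* \circ (\text{incl}) = \int_{\Delta^n} \circ\, ev_{k,n}^* \circ (\text{incl}) = It \]
on each summand of $ZZ(\Omega(M))$. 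Assembling these level-wise identities (and noting that in the real-valued commutative case there is no parallel transport $Ad_P$ to worry about, since the bundle is trivial and flat, so the curved $It$ reduces to the ordinary one) gives commutativity of the triangle.

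Second, I will check that each leg is a morphism in $\NGA$: $Col$ is a morphism by Proposition \ref{prop zz to I}, the zigzag $It$ is a morphism by Proposition \ref{prop zz chain map} and Proposition \ref{zz It algebra map}, and the classical $It$ is a morphism of $\DGA$s by Chen, hence of $\NGA$s via the inclusion $\DGA \hookrightarrow \NGA$.

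Third, to upgrade commutativity to homotopy equivalences, I will invoke the two results giving two of the three arrows. The zigzag iterated integral $It: ZZ(\Omega(M)) \to \Omega(PM)$ is a homotopy equivalence by Theorem \ref{thm It is NGA equivalence}, applied to the trivial line bundle $(E,\nabla) = (M\times\mathbb{R}, d)$. The classical iterated integral $It: CH^I(\Omega(M)) \to \Omega(PM)$ is a homotopy equivalence in $\DGA$ by Chen (as recorded in \cite[Lemma 3.3]{GJP}), and since $\DGA \hookrightarrow \NGA$ is a full subcategory with the same notion of chain homotopy (Definition \ref{defn:ChainHtpy} applied to $R=0$), this remains a homotopy equivalence in $\NGA$. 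Then Proposition \ref{prop:HomotopyEquivalences}(3) applied to the commuting triangle, with the two $It$ maps known to be equivalences, forces $Col$ to be a homotopy equivalence as well, completing the proof.

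The only subtlety I expect is making sure the zigzag $It$ specialized to the trivial line bundle truly coincides with the composition $\int_{\Delta^n}\circ\, ev_{k,n}^*$ with no residual parallel transport factors; this is immediate since $\ptrans{}{s}{t}=1$ for the trivial flat connection, so all $Ad_P$ operations become identities, and the formula \eqref{EQ IT^A} collapses to the standard expression. Everything else is bookkeeping.
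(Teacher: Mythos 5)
Your proposal is correct and follows essentially the same approach as the paper's proof: level-wise commutativity via the diagram relating $ev_{k,n}$, $ev_n$, and the diagonal $zz$ combined with Lemma \ref{Lemma column}, followed by the two-out-of-three property applied to the two iterated integrals. Your extra remark that the $Ad_P$ operations trivialize for the trivial flat bundle is a helpful sanity check that the paper leaves implicit, but it is the same argument.
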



\end{document}